\documentclass[a4paper,10pt]{article}
\usepackage[utf8x]{inputenc}
\usepackage{natbib}
\usepackage{amssymb,amsmath,amsfonts,amsxtra,amsthm}
\usepackage[colorinlistoftodos]{todonotes}
\usepackage{graphics,graphicx}
\usepackage{caption}
\usepackage{subcaption}
\usepackage{enumerate}   
\usepackage{float}
\usepackage[lined,boxed,commentsnumbered, ruled,vlined,linesnumbered,onelanguage]{algorithm2e}
\usepackage{bbm}
%\usepackage{fontspec}
%% Sets page size and margins
\usepackage[a4paper,top=3cm,bottom=2cm,left=3cm,right=3cm,marginparwidth=1.75cm]{geometry}
\usepackage[T1]{fontenc}
\usepackage{stmaryrd}

\newtheorem{Theorem}{Theorem}
\newtheorem{Conditions}{Conditions}
\newtheorem{Corollary}{Corollary}[Theorem]
\newtheorem{Lemma}[Theorem]{Lemma}

\title{The diameter of the minimum spanning tree of the complete graph with inhomogeneous random weights}
\author{Othmane SAFSAFI \\
       \\}

\date{--}
\begin{document}
\begin{titlepage}
    \centering
    \vfill
    {\bfseries\Large
        The diameter of the minimum spanning tree of the complete graph with inhomogeneous random weights\\
		\textit{}
 		
        \vskip2cm
		 Othmane SAFSAFI 
	}

    \vskip2cm

\begin{abstract}
We study a new type of random minimum spanning trees. It is built on the complete graph where each vertex is given a weight, which is a positive real number. Then, each edge is given a capacity which is a random variable that only depends on the product of the weights of its endpoints.  We then study the minimum spanning tree corresponding to the edge capacities. Under a condition of finite moments on the node weights, we show that  the expected diameter and typical distances of this minimum spanning tree are of order $n^{1/3}$. This is a generalization of the results of  \citet*{LNB09}. We then use our result to answer a conjecture in statistical physics about typical distances on a closely related object. This work also sets the ground for proving the existence of a non-trivial scaling limit of this spanning tree (a generalization of the result in  \citet*{LNCG13}). Our proof is based on a detailed study of rank-1 critical inhomogeneous random graphs, done in  \citet*{O20}, and novel couplings between exploration trees related to those graphs and Galton-Watson trees. 
\begin{figure*}[!htbp]
\centering
\includegraphics[width=1.\textwidth]{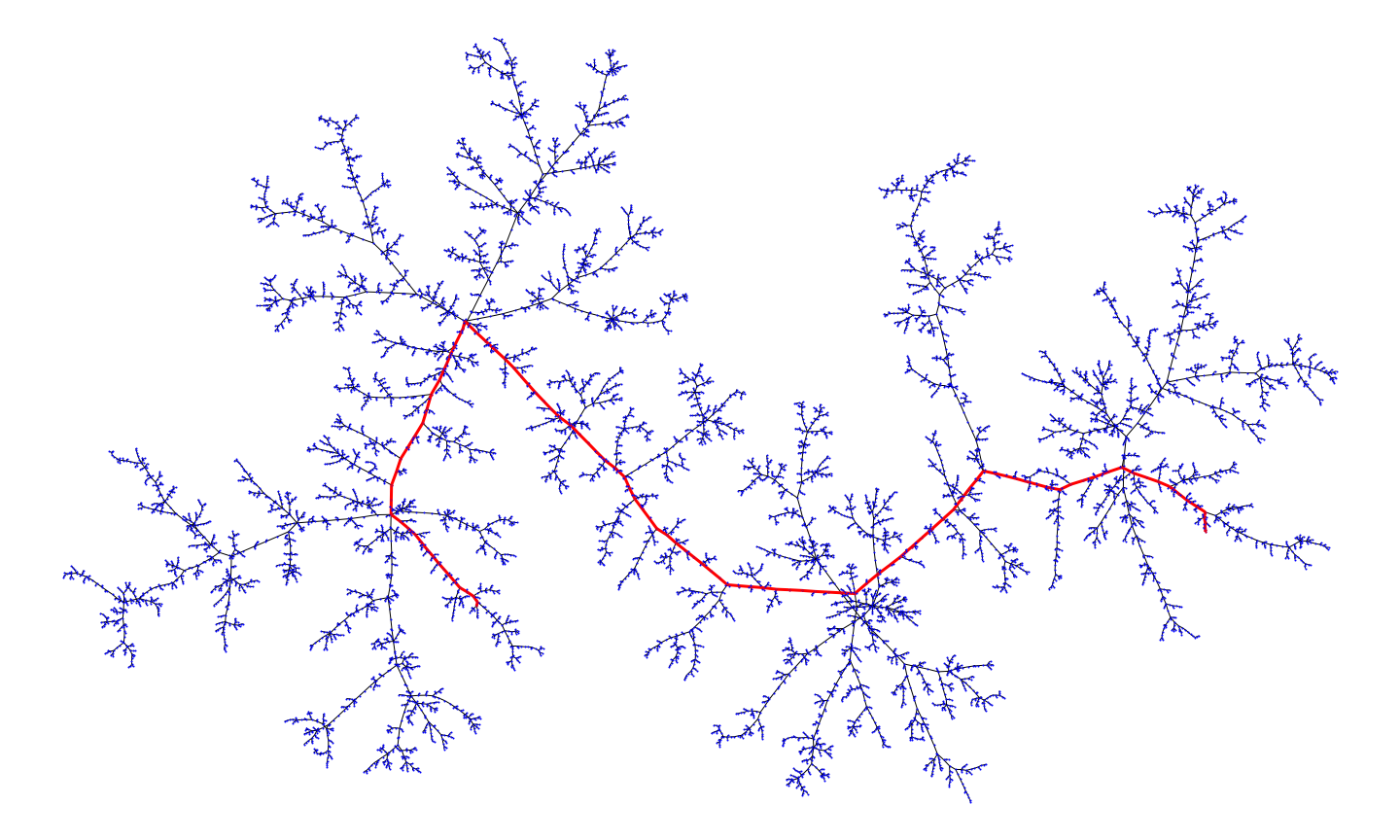}
\caption{An example of the random trees we study in this article. We highlighted a typical random path in the tree in red.}
\end{figure*}
\end{abstract}
\end{titlepage}
\section{Introduction}
\subsection{The model}
Let $G = (V,E)$ be a connected graph with $n$ nodes and $m$ edges. Let $e_1,e_2,...,e_{m}$ be positive real numbers that represent capacities on the edges of $G$. Assume that these capacities are all distinct, then there  exists a  unique spanning tree $T = (V,E_T)$ that minimizes the sum of the capacities on the edges:
$$ \sum_{e_i \in E_T} e_i.$$
We call this tree the minimum spanning tree (MST). If the graph is not connected the set of minimum spanning trees of its connected components is called the minimum spanning forest. If the weights are independent random variables with atomless distributions, the MST will be almost surely unique. 
The minimum spanning tree is an important object in combinatorial  optimization, it can be easily computed (see \citet*{KR56} and \citet*{P57}), and it can be used to construct approximations to more difficult problems such as the traveling salesman (\citet*{V01}). The study of random minimum spanning trees is also of independent interest in statistical physics (\citet*{CHE06}, \citet*{WU06}, \citet*{BR07}).    \par
From a probabilistic perspective, the minimum spanning tree has been studied extensively and on various graphs and models of randomness.
Let $\mathcal{T}_n$ denote the minimum spanning tree of a complete graph of size $n$ with i.i.d. $[0,1]$-uniform random capacities on its edges. \citet*{F85} proved that the total capacity of $\mathcal{T}_n$ converges to $\zeta(3) = \sum_{k=1}^{+\infty}\frac{1}{k^3}$. \citet*{aldous1990random} found the limit distribution of the degree of node $1$ in $\mathcal{T}_n$. These results are based on a local study of the minimum spanning tree. \par
Another type of questions concerns global properties of the random MST. For instance, \citet*{LNB09} showed that the diameter (maximum number of edges of a shortest path between two nodes) of $\mathcal{T}_n$ is of order $n^{1/3}$. This estimate was used crucially by \citet*{LNCG13} to show that there exists a measured metric space which is a scaling limit of $\mathcal{T}_n$ with distances scaled by $n^{-1/3}$.
In this paper, we extend the results of  \cite{LNB09} to a more general class of random minimum spanning trees. Let $n \in \mathbb{N}$ and $\textbf{W}=(w_1,w_2,..w_n)$, with $0< w_n \leq w_{n-1} \leq ... \leq w_1$, be a vector of positive weights, and consider the complete graph of size $n$. $\textbf{W}$ will always depend implicitly on $n$.
To each  non-oriented edge $\{i,j\}$, $i \neq j$,  we associate the random capacity $E_{\{i,j\}}$, which is an exponential random variable of rate $w_iw_j$.  The capacities are then used to create a sequence of graphs. For each $p \in [0,+\infty]$ let $G(\textbf{W},p)$ be the graph on $\{1,2....,n\}$ containing the edges of capacity at most $p$, so the edge set of $G(\textbf{W},p)$ is:
$$\left\{ \{i,j\} |  E_{\{i,j\}} \leq p \right\}.$$ 
Then  $(G(\textbf{W},p))_{p \in [0,+\infty]}$ is an increasing sequence of graphs (for inclusion), and for each fixed value of $p$, $G(\textbf{W},p)$ is called a rank-1 inhomogeneous random graph.
Now for each $p$, consider the forest $\mathcal{T}(\textbf{W},p)$ constructed  by deleting edges from $G(\textbf{W},p)$ as follows: \par

We construct a sequence of graphs  $(\mathcal{G}(\textbf{W},p,i))_{\binom{n}{2} \geq i \geq 1}$ such that $\mathcal{G}(\textbf{W},p,\binom{n}{2}) = \mathcal{T}(\textbf{W},p)$. First, sort the edges $(\{i,j\})_{i \neq j}$ of $G(\textbf{W},p)$ by decreasing order of their capacities $(E_{\{i,j\}})_{i \neq j}$. At step $1$, $\mathcal{G}(\textbf{W},p,1) = G(\textbf{W},p)$. If deleting the first edge in the order disconnects a connected component of  $\mathcal{G}(\textbf{W},p,1)$ then keep it. Otherwise, delete it. This gives $\mathcal{G}(\textbf{W},p,2)$. Then, move on to the next edge and do the same. Continue like this by either keeping or deleting each edge consecutively. This procedure ensures that all the graphs $\mathcal{G}(\textbf{W},p,i)$ have the same number of connected components and only their number of cycles decreases. We call this the edges deletion algorithm,  also known as bombing optimization (\citet*{BR07}).
It is easy to see that this procedure yields a forest in the end since we are removing all the cycles from the graph. Moreover, by construction, $\mathcal{T}(\textbf{W},p)$ is the  minimum spanning forest of $G(\textbf{W},p)$ with respect to the capacities $(E_{\{i,j\}})_{i\neq j}$ because we delete the largest capacities first. Hence, $\mathcal{T}(\textbf{W},+\infty)$ is the minimum spanning tree of the complete graph with respect to the capacities $(E_{\{i,j\}})_{i \neq j}$. The diameter of a graph is the largest graph distance (number of edges) between two of its nodes. If the graph is not connected its diameter is the maximum of the diameters of its connected components. The purpose of this paper is the study of the diameter of $\mathcal{T}(\textbf{W},+\infty)$. Before going further in the discussions, we introduce some notations.

\subsection{Notations and definition of the exploration process}
We start by giving another definition of the graphs and defining the exploration processes we will work with. We use exactly the same definition we already used in  \cite{O20}. 
Consider $n \in \mathbb{N}$ vertices labeled $1,2..,n$. For a vector of weights $\textbf{W}=(w_1,w_2,..w_n)$, where $0< w_n \leq w_{n-1} \leq ... \leq w_1$, we create the inhomogeneous random graph associated to $\textbf{W}$ and to $p \leq +\infty$ in the following way: \par 
Each potential edge $\{i,j\}$ is in the graph with probability $1-e^{-w_iw_jp}$ independently from everything else. This gives a random graph that we call the rank-1 inhomogeneous random graph associated to $\textbf{W}$ and $p \leq +\infty$. This construction yields graphs with the marginal distribution of the sequence $(G(\textbf{W},p))_{p \in [0,+\infty]}$ presented previously. We will keep both those two graphs construction methods in mind in this article, and switch between the two depending on our needs.

Before stating the main theorem, we define an exploration process for $G(\textbf{W},p)$ seen as a graph from the sequence $(G(\textbf{W},p))_{p \in [0,+\infty]}$ for a fixed $p$. This process is based on an "horizontal" exploration of the graph, called the breadth-first walk (BFW).  Write:
$$ 
\ell_n = \sum_{i=1}^n w_i,
$$
and recall that the weights depend implicitly on $n$. We say that a tree is spanning a graph if it has the same set of nodes and a subset of its edges. The BFW also naturally yields a spanning forest of $G(\textbf{W},p)$. That is a sequence of spanning trees of the connected component of $G(\textbf{W},p)$. \par 
For each potential edge $\{i,j\}$ recall the definition of  $E_{\{i,j\}}$ from the previous subsection. The BFW operates by steps, define the following sets of vertices. A vertex is always in exactly one of those sets:
\begin{itemize}
    \item $(\mathcal{U}(i))_{1 \leq i \leq n}$ is the sequence of sets of unexplored vertices at each step.
    \item $(\mathcal{D}(i))_{1 \leq i \leq n}$ is the sequence of sets of discovered but not yet explored vertices at each step.
    \item $(\mathcal{F}(i))_{1 \leq i \leq n}$ is the sequence of sets of explored vertices at each step.
\end{itemize}
First, choose a vertex  $i$  with probability: $$\mathbb{P}(v(1)=i) = \frac{w_i}{\ell_n},$$ 
and call it $v(1)$. Let $\mathcal{V}$ be the set of all vertices labels, and $\mathcal{U}(1)=\mathcal{V}\setminus \{v(1)\}$, $\mathcal{D}(1)=\{v(1)\}$. At step $2$, $v(1)$ is explored. The vertices $j$ that are unexplored and such that $E_{\{j,v(1)\}} \leq p $ become discovered but not yet explored. We call them children of $v(1)$.
Let $c(1)$ be the number of children of $v(1)$. Denote their labels by $(v(2),v(3),...,v(c(1)+1)$ in increasing order of their $E_{\{j,v(1)\}}$'s. For $i \geq 1$, denote the set $\{v(1),v(2),...,v(i)\}$ by $\mathcal{V}_i$. Hence, at step $2$ we have:
\begin{itemize}
    \item $\mathcal{U}(2) = \mathcal{V}\setminus \mathcal{V}_{c(1)+1}$.
    \item $\mathcal{D}(2) = \mathcal{V}_{c(1)+1} \setminus \mathcal{V}_{1}$.
    \item $\mathcal{F}(2) =  \mathcal{V}_{1}$.
\end{itemize}
Now, at the step $3$, $v(2)$ becomes explored. The vertices $j$ that are unexplored and such that $E_{\{j,v(2)\}} \leq p $ become discovered but not yet explored. We call them children of $v(2)$, and we denote their labels by $\{v(c(1)+2),v(c(1)+3)...,v(c(1)+c(2)+3)\}$. The BFW continues like this. At step $i+1$ node $v(i)$ becomes explored. If the set of discovered but not yet explored nodes become empty at some step $i$, this means that the exploration of a connected component is finished. In that case, we move on to the next step by choosing a vertex $j$ with probability proportional to its weight $w_j$ among the unexplored vertices (like we did for $v(1)$). We call this a size-biased sampling. 
The spanning forest associated to the the BFW is created by considering each node without a parent as a root, and adding only the edges between parent nodes and their children. We call the trees in that forest the exploration trees. By construction, exploration trees are spanning trees of the connected components of $G(\textbf{W},p)$. We say that a connected component is discovered at step $i$ if its first node discovered by the BFW is $v(i)$. Similarly, we say that a connected component is explored at step $i$ if its last node that becomes explored in the BFW is $v(i)$.\par
Generally, let $c(i)$ be the number of children of the  node labeled $v(i)$. The exploration process associated to the BFW above is defined as follow, for $n-1 \geq i \geq 0$:
\begin{equation*}
\begin{aligned}
    L'_0&=1,\\
    L'(i+1)&= L'_i+c(i+1)-1.
\end{aligned}
\end{equation*}
The reflected exploration process is defined by
\begin{equation*}
\begin{aligned}
    L_0&=1,\\
    L(i+1)&= \max(L_i+c(i+1)-1,1).
\end{aligned}
\end{equation*}

\begin{figure}[!htbp]
\centering
\includegraphics[width=0.8\textwidth]{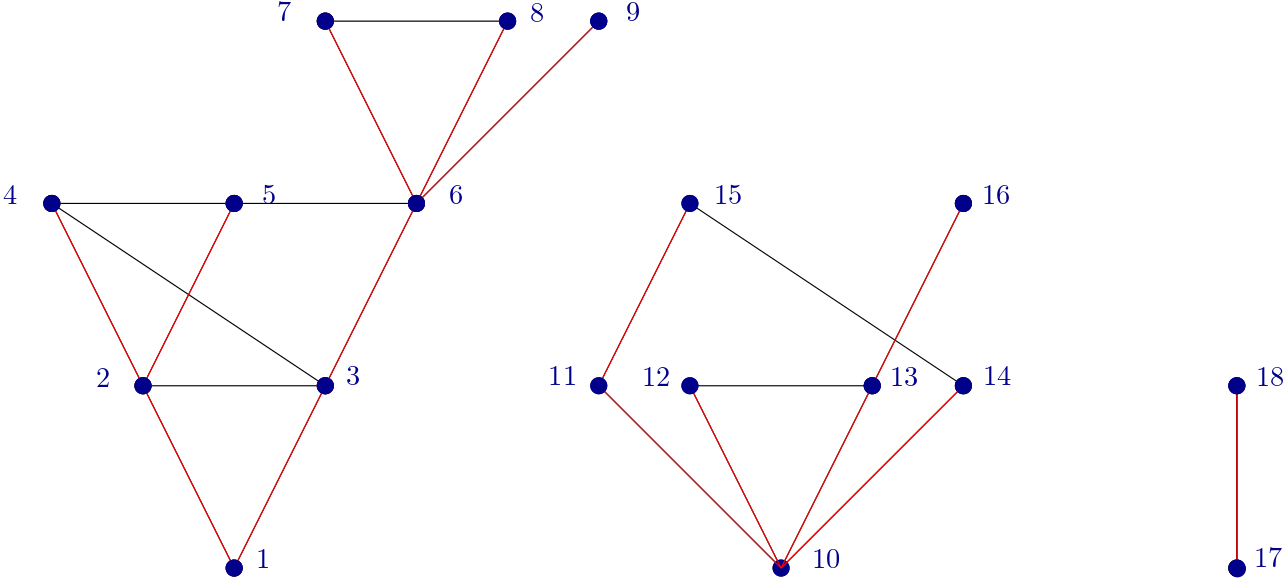}
\caption{Example of a graph with ordered nodes. The integers correspond to the order in the exploration process. The edges in red correspond to the edges of the exploration trees. The labels of the nodes are not represented.}
\label{fig1}
\end{figure}
\begin{figure}[!htbp]
\centering
\includegraphics[width=0.9\textwidth, height=0.3
\textheight]{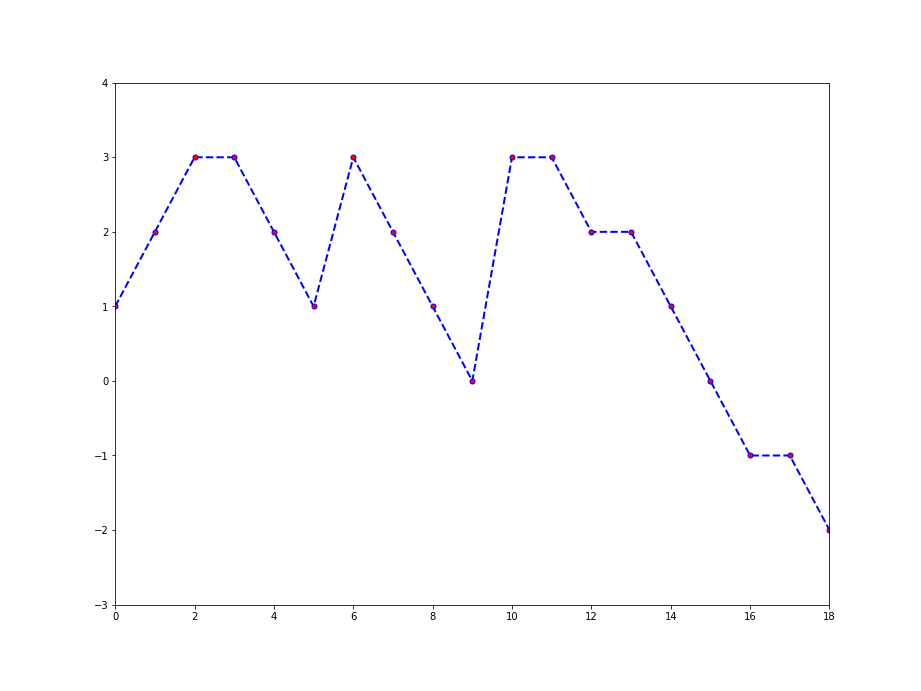}
\caption{The exploration process of the graph in Figure \ref{fig1}.}
\end{figure}

The increment of the process $L'$ at step $i$ is the number of nodes added to the set of discovered nodes in the BFW at step $i$. This number is at least $-1$ if the node being explored has no children. The process $L'$ contains a lot of the information about $G(\textbf{W},p)$. For a more indepth discussion on those process we refer the reader to the related part in \cite{O20}.\par
The order of appearance of the nodes in the exploration process corresponds to a size-biased sampling. Formally, we have:
\begin{Lemma}
With the notations presented above, for:
$$i \in \{0,1,...,n-1\},$$
and $$ j \in  \{0,1,...,n\},$$
we have:
\begin{equation*}
\begin{aligned}
&\mathbb{P}\left(v(1)= j \right) = \frac{w_j}{\ell_n}.  \\
&\mathbb{P}\left(v(i+1)=j \quad | \mathcal{V}_i\right) = \frac{w_j\mathbbm{1}(j \not\in \mathcal{V}_i)}{\ell_n-\sum\limits_{k=1}^{i}w_{v(k)}}.
\end{aligned}
\end{equation*}
\end{Lemma}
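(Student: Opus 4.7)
The plan is to proceed by induction on $i$, with $i=0$ being immediate from the definition of $v(1)$. For the induction step I would condition on the full history $H_i$ of the BFW up to step $i$ --- the sequence $v(1),\dots,v(i)$, the parent relations, every edge capacity that the algorithm has actually inspected, and the ``$>p$'' markers on edges tested but not retained --- and then obtain the lemma by taking conditional expectations given the coarser $\sigma$-algebra generated by $\mathcal{V}_i$.

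To carry out the inductive step I split on whether step $i+1$ opens a new connected component or continues the exploration of the current one. In the first case $v(i+1)$ is explicitly drawn size-biased from $\mathcal{V}\setminus\mathcal{V}_i$, so the announced formula holds by construction. In the second case $v(i+1)$ is the vertex $j\notin\mathcal{V}_i$ that minimises $E_{\{j,v(k)\}}$, where $v(k)$ is the vertex at the front of the BFS queue which is just being dequeued. The key memorylessness observation is that, given $H_i$, the capacities $\{E_{\{j,v(k)\}}:j\notin\mathcal{V}_i\}$ are still independent exponentials with their original rates $w_j w_{v(k)}$, because those particular edges have never been tested by the BFW: vertex $v(k)$ is only now becoming explored. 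A classical competing-exponentials calculation then yields
\[
\mathbb{P}\bigl(v(i+1)=j \mid H_i\bigr)=\frac{w_j w_{v(k)}}{\sum_{l\notin\mathcal{V}_i} w_l w_{v(k)}}=\frac{w_j}{\ell_n-\sum_{m=1}^{i}w_{v(m)}},
\]
and one checks that further conditioning on $\{\min_j E_{\{j,v(k)\}}\le p\}$ --- the event ensuring we are in the second case --- preserves this ratio, since for i.i.d.\ exponentials the identity of the minimiser is independent of the value of the minimum.

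Because both cases produce the same conditional probability, the tower property over $H_i$ and over the case split gives the formula stated in the lemma. The main obstacle is precisely the bookkeeping of revealed versus fresh information: one must verify inductively that the edges from each queued parent $v(k)$ to the not-yet-discovered vertices $j\notin\mathcal{V}_i$ are indeed fresh at the moment $v(k)$ is dequeued, so that memorylessness applies cleanly. Once this $\sigma$-field tracking is set up, the competing-exponentials computation is routine and the induction closes.
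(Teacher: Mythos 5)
The paper does not give a proof of this lemma; it simply refers the reader to \cite{O20}, so there is no in-paper argument to compare against. Your proposed strategy (induction, conditioning on a finer exploration-history $\sigma$-algebra and then coarsening to $\sigma(\mathcal{V}_i)$, the competing-exponentials race, and the independence of the race winner from the race time) is the standard and correct route for such size-biased-ordering lemmas.

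That said, there is a genuine gap in the continuation branch. You state that when the exploration continues, $v(i+1)$ minimises $E_{\{j,v(k)\}}$ over $j\notin\mathcal{V}_i$, where $v(k)$ is the front of the queue, and you only handle the extra conditioning on $\{\min_j E_{\{j,v(k)\}}\le p\}$. But between the discovery of $v(i)$ and of $v(i+1)$ the queue may advance past several exhausted members: if $\min_j E_{\{j,v(k)\}}>p$ while discovered-but-unexplored vertices remain, then $v(i+1)$ is produced by the race at $v(k+1)$, or a later queue member, not at $v(k)$, and your two-way split ``new root'' / ``child of the dequeued vertex'' does not cover this. To close the argument one must decompose over the identity of the winning queue member (with the ``new root'' case as the event that every remaining queue member is exhausted), observe that the capacities emanating from distinct queue members are mutually independent, and apply argmin/min independence within each race so that every sub-event contributes a term proportional to $w_j$; summing the contributions then recovers the stated ratio. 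A second imprecision: your claim that the capacities $\{E_{\{j,v(k)\}}:j\notin\mathcal{V}_i\}$ ``have never been tested'' fails in the sequential view when $v(i)$ is itself already a child of $v(k)$, since those capacities are then conditioned to exceed $E_{\{v(i),v(k)\}}$; one must invoke exponential memorylessness to reset that race. Neither fix requires ideas beyond those you already invoke, but both must be spelled out for the induction to close.
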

The proof of this fact is present in \cite{O20}.
We will assume the following conditions on $\textbf{W}$ in the entire article.

\begin{Conditions} 
\label{cond_nodess}
There exists some positive random  variable $W$  such that: 
\begin{enumerate}[(i)]
\item The distribution of a  uniformly chosen weight $w_{X}$ converges weakly to $W$.
\item $\mathbb{E}[W^3] < \infty$.
\item $\mathbb{E}[W^2] = \mathbb{E}[W]$.
\item $\ell_n = \mathbb{E}[W]n + o(n^{2/3})$.
\item $\sum\limits_{k=1}^{n}w_k^2 = \mathbb{E}[W^2]n + o(n^{2/3})$.
\item $\sum\limits_{k=1}^{n}w_k^3 = \mathbb{E}[W^3]n + o(1)$. 
\item $\max_{i \leq n} w_i = o(n^{1/3})$. 
\end{enumerate}
\end{Conditions}

\begin{Conditions} 
\label{cond_nodess2}
All the Conditions in \ref{cond_nodess} hold, and moreover:
\begin{enumerate}[(i)]
  \setcounter{enumi}{7}
  \item$\sum\limits_{k=1}^{n}w_k^4 = \mathbb{E}[W^4]n + o(1),$ and $\mathbb{E}[W^4] < \infty$.
\end{enumerate}
\end{Conditions}

We refer the reader to  \cite{O20} for a more in-depth discussion of these conditions. Here, we only draw attention to two facts. Firstly, condition $(viii)$ is only present for technical reasons in  \cite{O20}. It is not necessary for the main theorem to hold. And we will get rid of this condition in a subsequent article. Secondly,
an important case to keep in mind is when $(w_1,w_2,...,w_n)$ are realizations of random variables $(W_1,W_2,...,W_n)$ which are i.i.d. with distribution $W$. In that case the conditions can be seen as consequences of convergence theorems and  hold almost surely.\par
We also define the size of a connected component $\mathcal{C}$, with vertices set $V(\mathcal{C})$, as the number of vertices in $\mathcal{C}$. The distance between two vertices of $\mathcal{C}$ is the number of edges in the smallest (in number of edges) path between them. We also define the weight of $\mathcal{C}$ as:
$$
\sum_{i\in V(\mathcal{C}) } w_i.
$$
We call surplus (or excess) of $\mathcal{C}$ the number of edges that have to be removed from it in order to make it a tree. For instance, the surplus of a tree is $0$, and the surplus of a cycle is $1$.
We can now state our two main theorems.

\begin{Theorem}
\label{principal}
Suppose that Conditions \ref{cond_nodess2} are verified. For any $\ell_n^{4/3}-\ell_n^{1/3} \geq f_n \geq 0$ the diameter of $\mathcal{T}(\textbf{W},\frac{1}{\ell_n}+\frac{f_n}{\ell_n^{4/3}})$, denoted by $\textnormal{diam}(\mathcal{T}(\textbf{W},\frac{1}{\ell_n}+\frac{f_n}{\ell_n^{4/3}}))$, verifies\footnote{The notation $\Theta(n^{\frac{1}{3}})$ means that there exists two constants $\epsilon > 0$ and $A > 0$ and an $N > 0$, such that for any $n \geq N$ we have the diameter is larger than $\epsilon n^{1/3}$ and smaller than $A n^{1/3}$.}:
$$\mathbb{E}\left[\textnormal{diam}\left(\mathcal{T}\left(\textbf{W},\frac{1}{\ell_n}+\frac{f_n}{\ell_n^{4/3}}\right)\right)\right] =\Theta(n^{\frac{1}{3}}).$$\\ 
\end{Theorem}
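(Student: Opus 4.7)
The plan is to follow the general strategy of \cite{LNB09} for the homogeneous Erdős-Rényi case, systematically replacing the homogeneous inputs by their rank-1 inhomogeneous analogs from \cite{O20} together with the novel BFW/Galton-Watson couplings announced in the abstract. The structural cornerstone is a standard matroid-exchange fact: the edges of $\mathcal{T}(\mathbf{W},+\infty)$ of capacity at most $p$ form exactly $\mathcal{T}(\mathbf{W},p)$, the minimum spanning forest of $G(\mathbf{W},p)$. Hence $p\mapsto \mathcal{T}(\mathbf{W},p)$ is edge-monotone, every new threshold merges two previously disjoint trees, and $\text{diam}(\mathcal{T}(\mathbf{W},p))$ is non-decreasing in $p$. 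Consequently the theorem reduces to proving (i) $\mathbb{E}[\text{diam}(\mathcal{T}(\mathbf{W},+\infty))]=O(n^{1/3})$ and (ii) $\mathbb{E}[\text{diam}(\mathcal{T}(\mathbf{W},1/\ell_n))]=\Omega(n^{1/3})$, because for every $p_c$ in the range of the theorem one has $\mathcal{T}(\mathbf{W},1/\ell_n)\subseteq \mathcal{T}(\mathbf{W},p_c)\subseteq \mathcal{T}(\mathbf{W},+\infty)$.

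For the lower bound (ii), any spanning tree of a connected graph has diameter at least that of the graph, so $\text{diam}(\mathcal{T}(\mathbf{W},1/\ell_n))\geq \max_C \text{diam}(C)$ over components $C$ of $G(\mathbf{W},1/\ell_n)$. The critical-component analysis of \cite{O20} gives $|\mathcal{C}_1(1/\ell_n)|=\Theta(n^{2/3})$ with probability bounded away from $0$. I would then couple the BFW exploration of $\mathcal{C}_1$ to a near-critical Galton-Watson tree whose offspring variance converges to $\mathbb{E}[W^2]/\mathbb{E}[W]=1$ by Condition $(iii)$; standard results on the Brownian excursion scaling limit of such trees conditioned to have size $\Theta(n^{2/3})$ yield an exploration-tree height of order $n^{1/3}$ with positive probability, which is a lower bound on $\text{diam}(\mathcal{C}_1)$.

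For the upper bound (i) I would split the MST according to a slightly supercritical threshold $p_0 = 1/\ell_n+F/\ell_n^{4/3}$ with $F$ a large constant. The MST equals $\mathcal{T}(\mathbf{W},p_0)$ augmented by bridging edges of capacity $>p_0$; both contributions must be controlled at level $O(n^{1/3})$. The first is bounded via the component structure at $p_0$: from \cite{O20}, every component has size $O(n^{2/3}\log^{O(1)}n)$ and surplus $O(1)$ with overwhelming probability, and the BFW/Galton-Watson coupling provides exponentially decaying tails for the BFW-tree height at order $n^{1/3}$. Since removing the $O(1)$ cycle-edges to pass from the BFW tree to the MST within a component affects its diameter only by $O(1)$, the first contribution is $O(n^{1/3})$ in expectation. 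For the bridging edges I would iterate the same analysis at a sequence of supercritical thresholds $p_0<p_1<\dots$; at each scale the vertices not yet in the giant form a residual weight vector to which the BFW/Galton-Watson coupling can be reapplied, yielding a geometrically decreasing diameter increment whose total is $O(n^{1/3})$.

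The principal technical obstacle is precisely this iteration. Each layer requires verifying that the residual weight vector, after deletion of the current giant, satisfies an approximate form of Conditions \ref{cond_nodess2} so that the critical estimates of \cite{O20} and the Galton-Watson coupling remain applicable. The inhomogeneity blocks the symmetry arguments of \cite{LNB09}; instead one must track the weight mass transferred between scales and verify that the moment hypotheses are approximately preserved. The novel BFW/Galton-Watson couplings announced in the abstract are designed precisely to absorb this inhomogeneity, by reducing each diameter tail bound to an explicit Galton-Watson height estimate depending only on the residual weight moments.
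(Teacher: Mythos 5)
Your overall plan — monotonicity of $p\mapsto\mathcal{T}(\mathbf{W},p)$ under Kruskal, a lower bound from the critical window, and an upper bound obtained by iterating BFW/Galton--Watson couplings across a sequence of thresholds — is indeed the skeleton of the paper's argument, and you correctly flag the propagation of moment-type conditions to the residual graph as the central obstacle. However, two of your intermediate claims do not survive scrutiny, and the second reveals a missing idea rather than a mere detail. First, you assert that at the split point every component has ``surplus $O(1)$ with overwhelming probability''. This holds only if $F$ stays a fixed constant, in which case the giant still has size $\Theta(F n^{2/3})$ and you have not escaped the critical window; the iteration up through the supercritical phase is then the entire proof. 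As the iterated threshold $f$ grows, the surplus of the giant grows like $f^3$ (Theorem~\ref{principal_a}), so applying Lemma~\ref{excess_height} directly to the exploration tree of the giant would give $\text{diam}\lesssim f^3\cdot\text{height}$, far too large. The paper explicitly identifies this as the reason a direct GW coupling fails and introduces a ``snapshot'' sequence $f(i)=(3/2)^iF$ together with a \emph{pruning step}: before coupling with Galton--Watson trees, one throws in i.i.d. edge cuts with probability $2f/\ell_n^{1/3}$ so that the offspring law becomes strictly subcritical, then bounds the number of cuts along a longest path and closes the argument with a recursive height estimate (Theorem~\ref{Theoo18}). This is the key device that lets the GW height bounds remain finite in the supercritical regime, and it is absent from your proposal.

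Second, you do not separate the critical window from the barely-supercritical phase. The paper pivots at $f'_n=\ell_n^{1/3}/\log n$: below this, the snapshot iteration (with the pruning device above and the slight subcriticality of $G(\mathcal{V}\setminus\tilde{\mathcal{V}}_i,p_{f(i+1)})$ quantified by Lemma~\ref{esperances}) contributes the dominant $O(n^{1/3})$ term; above it, the giant has weight $\Theta(n/\log n)$, and a direct argument using the memorylessness of exponential capacities shows that each remaining small component merges into the giant before its longest path exceeds $O(\log(n)^{9/4}n^{1/4})=o(n^{1/3})$. Your phrase ``geometrically decreasing diameter increment'' suggests the bridging terms shrink from scale to scale, but the actual decomposition is asymmetric: the critical-window snapshots supply the $\Theta(n^{1/3})$, while the post-$f'_n$ phase is a strictly lower-order correction. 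Without the divergent pivot and the pruning trick, the iteration you describe cannot be closed, so as written the upper bound does not go through.
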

We will further discuss our choice of parameter $f_n$ in this theorem in the next sections.
The second theorem is related to typical distances in those minimum spanning trees.
\begin{Theorem}
\label{stats}
Suppose that Conditions \ref{cond_nodess2} are verified. For  $c > 1$ and a sequence of parameters $ \tilde{p}_n \geq \frac{c}{\ell_n}$, let $(U_{f_n},V_{f_n})$ be two uniformly randomly drawn nodes from the largest tree in $\mathcal{T}(\textbf{W},\tilde{p}_n )$, and let  $d(U_{f_n},V_{f_n})$ be the distance between them in $\mathcal{T}(\textbf{W},\tilde{p}_n )$.  We have : 
$$
\mathbb{E}[d(U_{f_n},V_{f_n})] = \Theta(n^{1/3}).
$$
\end{Theorem}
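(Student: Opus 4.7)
The distance between two vertices in a tree is at most the tree's diameter, so
$$\mathbb{E}[d(U_{f_n},V_{f_n})]\leq\mathbb{E}[\textnormal{diam}(\mathcal{T}(\mathbf{W},\tilde p_n))].$$
When $\tilde p_n=1/\ell_n+f_n/\ell_n^{4/3}$ lies in the window of Theorem \ref{principal}, this yields the desired $O(n^{1/3})$ upper bound directly. For $\tilde p_n$ beyond that window (possibly $+\infty$), I would use the nesting $\mathcal{T}(\mathbf{W},p_1)\subseteq\mathcal{T}(\mathbf{W},p_2)$ whenever $p_1\leq p_2$, which follows from Kruskal's algorithm: increasing $p$ only adds edges that merge previously disconnected components, never breaking existing tree edges. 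Fixing $p^{\star}=c'/\ell_n$ with $1<c'<c$ in the window of Theorem \ref{principal}, the giant tree at $\tilde p_n$ is obtained from the giant tree at $p^{\star}$ by attaching each other component of $G(\mathbf{W},p^{\star})$ through a single edge, so its diameter exceeds the diameter at $p^{\star}$ by at most twice the largest diameter among non-giant components of $G(\mathbf{W},p^{\star})$. By the component-size estimates from \cite{O20}, these components have size $o(n^{1/3})$ in the supercritical regime, hence diameter $o(n^{1/3})$, and the upper bound propagates.

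For the lower bound, the plan is to refine the lower-bound argument of Theorem \ref{principal} into a depth-profile estimate. That argument produces a path of length $\Theta(n^{1/3})$ in the giant tree of $\mathcal{T}(\mathbf{W},\tilde p_n)$ by analyzing the BFW exploration process $L'$ of $G(\mathbf{W},\tilde p_n)$ around the critical window: once rescaled by $n^{-1/3}$, $L'$ is close to a Brownian motion with parabolic drift, and its excursions of height $\Theta(n^{1/3})$ produce deep subtrees of the exploration forest that survive the bombing procedure. I would upgrade this into the statement that, rooting the giant tree at $v(1)$, a uniformly positive fraction of its vertices lie at tree-distance at least $\epsilon n^{1/3}$ from the root, by transferring depth estimates from Galton–Watson trees to the exploration tree via the couplings announced in the abstract. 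Applied to both $U_{f_n}$ and $V_{f_n}$, this gives that each of them is at depth $\Theta(n^{1/3})$ with probability bounded away from zero.

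The main obstacle is separating the depth profile from the pairwise-distance profile: a tree of diameter $\Theta(n^{1/3})$ can a priori have typical distance $O(1)$ (think of a star with one long whisker). I therefore need to show that the least common ancestor of $U_{f_n}$ and $V_{f_n}$ typically lies at depth $o(n^{1/3})$, equivalently that $\mathcal{T}(\mathbf{W},\tilde p_n)$ exhibits macroscopic branching at scale $n^{1/3}$. The natural route is to prove that with probability bounded below, the giant tree contains several disjoint subtrees each of weight $\Theta(\ell_n)$ and depth $\Theta(n^{1/3})$, attached along a core whose $n^{1/3}$-rescaled diameter is itself $\Theta(1)$; this is exactly what the parabolic-drift scaling limit of $L'$ predicts, as several macroscopic excursions coexist in the near-critical window and merge into the giant. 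A uniform pair $(U_{f_n},V_{f_n})$ then falls into two distinct such subtrees with positive probability, forcing $d(U_{f_n},V_{f_n})\geq\epsilon' n^{1/3}$ on a set of positive probability, which gives the matching $\Omega(n^{1/3})$ bound in expectation and closes the proof.
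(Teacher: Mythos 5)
Your upper bound is exactly the paper's: distance is bounded by the diameter, and Theorem \ref{principal} (together with the monotonicity $\mathcal{T}(\textbf{W},p_1)\subseteq\mathcal{T}(\textbf{W},p_2)$ for $p_1\leq p_2$) gives $\mathbb{E}[\textnormal{diam}]=O(n^{1/3})$ for the whole supercritical range, so the first paragraph is fine. The detour through depth profiles in your second paragraph is unnecessary, but you correctly identify in the third paragraph that the real content of the lower bound is macroscopic branching, not depth. The idea of exhibiting two disjoint macroscopic subtrees of the giant tree at mutual distance $\Theta(n^{1/3})$ is indeed the route the paper takes.

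The gap is in how you propose to obtain these two macroscopic subtrees. You assert that "several macroscopic excursions coexist in the near-critical window and merge into the giant," and that this already gives disjoint subtrees of \emph{weight $\Theta(\ell_n)$} at parameter $\tilde p_n$. But the scaling limit of $L'$ in the critical window only controls structure at scale $n^{2/3}$: it produces (via the GHP convergence of \cite{br20}) two disjoint subtrees $B_1(p_f), B_2(p_f)$ of the critical giant of weight $\Theta(n^{2/3})$ at mutual distance $\Theta(n^{1/3})$. Passing from weight $\Theta(n^{2/3})$ at $p_f$ to weight $\Theta(n)$ at $\tilde p_n$ is not a consequence of the critical scaling limit and is precisely what requires an additional argument. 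The paper tracks $B_1(p)$ and $B_2(p)$ through the Kruskal process for $p_f\leq p\leq\tilde p_n$: since each newly attached component joins $B_1(r_i^-)$ with probability $W(B_1(r_i^-))/W(B(r_i^-))$, the pair $(W(B_1),W(B)-W(B_1))$ is a weighted Pólya urn, and an induction on the steps preserves the expected weight proportion, giving $\mathbb{E}[W(B_1(\tilde p_n))]=\Theta(n)$. This mechanism is the key step your proposal omits, and without it the claim that the critical-window excursions "merge into" macroscopic subtrees of the supercritical giant is unsupported. A second, smaller omission: the Pólya urn argument naturally yields a weight bound, but since $U_{f_n},V_{f_n}$ are uniformly chosen \emph{nodes} you need a size bound; the paper converts weight to size via the uniform integrability of $w_{V_n}$ (Conditions (i) and (iv)), which rules out sets of $o(n)$ nodes carrying $\Theta(n)$ weight. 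You would also need to justify, as the paper does, that the $B(p_f)$-distance between the ancestors in $B_1(p_f)$ and $B_2(p_f)$ is preserved as a lower bound on distances in $B(\tilde p_n)$, which follows because Kruskal attaches each new component through a single edge so that the subtree $B(p_f)$ sits isometrically inside $B(\tilde p_n)$.
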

One important thing that will be clear from our proofs is that those theorems hold if we replace the inhomogeneous minimum spanning trees by other types of trees verifying an inclusion property. 
Let $a > 0$ and $(G_p)_{p \geq a}$ be an increasing process of graphs for inclusion. We say that the sequence of forests $(T_p)_{p \geq a}$ is an increasing process of spanning forests of  $(G_p)_{p \geq a}$ if, for any $p \geq a$, $T_p$ is a spanning forest of $G_p$, and the process $(T_p)_{p \geq a}$ is increasing for inclusion.

By definition $(\mathcal{T}(\textbf{W},p))_{p \geq 0}$ is an increasing process of spanning forests of $(G(\textbf{W},p))_{p \geq 0}$. A direct corollary of our proofs is the following : 
\begin{Corollary}
\label{gen}
Theorems \ref{principal} and \ref{stats} still hold if we replace the trees $(\mathcal{T}(\textbf{W},p))_{p \geq \frac{1}{\ell_n}}$ by any increasing process of spanning forests of $(G(\textbf{W},p))_{p \geq \frac{1}{\ell_n}}$.
\end{Corollary}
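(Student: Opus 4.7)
The plan is to revisit the proofs of Theorems~\ref{principal} and~\ref{stats} and verify that the only features of $(\mathcal{T}(\mathbf{W},p))_{p \geq 1/\ell_n}$ used are that each $\mathcal{T}(\mathbf{W},p)$ is a spanning forest of $G(\mathbf{W},p)$ and that the family is increasing in $p$ for inclusion. If every quantitative estimate in the main proof ultimately rests on these two properties, then the same estimates hold for any increasing spanning-forest process $(T_p)_{p \geq 1/\ell_n}$ and the corollary follows with no additional work.

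For the lower bound in Theorem~\ref{principal} and the lower bound in Theorem~\ref{stats}, the arguments factor through the random graph $G(\mathbf{W},p)$ itself. Any spanning tree of a connected graph $C$ satisfies $d_T(u,v) \geq d_C(u,v)$ for all $u,v$, so graph-level lower bounds of order $n^{1/3}$ on the diameter of the largest component and on typical intra-component graph distances, established in the main text using the exploration process and the Galton--Watson couplings announced in the abstract, immediately give the desired lower bounds when $\mathcal{T}(\mathbf{W},p)$ is replaced by an arbitrary spanning forest $T_p$ of $G(\mathbf{W},p)$.

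For the upper bound I would inspect the portion of the proof of Theorem~\ref{principal} that controls the MST diameter from above. That proof rests on two ingredients concerning $G(\mathbf{W},p)$: good moment bounds on the heights of the BFW exploration trees, and a control on the surplus of the critical components. Combined with a deterministic bound that a spanning tree of a component with controlled surplus has diameter of the same order as the graph diameter, these yield a uniform $O(n^{1/3})$ bound on the spanning-tree diameter of every component, hence on $\textnormal{diam}(T_p)$ for any choice of spanning forest. The upper bound for typical distances in Theorem~\ref{stats} then follows from the diameter bound.

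The main obstacle will be the bookkeeping audit: I must confirm that no step of the proof of Theorem~\ref{principal} implicitly uses the greedy Kruskal construction of the MST, and in particular that the step translating component structure into spanning-tree diameter really yields a bound valid for every spanning tree of every component, uniformly over the choice of the initial spanning forest $T_{1/\ell_n}$. A priori, $T_{1/\ell_n}$ could span a critical component by a tree of diameter as large as $n^{2/3}$, so the argument must draw its strength from the surplus/diameter control of the component rather than from any selection rule on the tree.
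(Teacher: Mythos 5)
Your proposal follows the same high-level route as the paper, which gives only a one-line proof: reread the arguments for Theorems~\ref{principal} and~\ref{stats} and observe that they rely only on the fact that $(\mathcal{T}(\textbf{W},p))_p$ is an increasing family of spanning forests of $(G(\textbf{W},p))_p$. Your final paragraph correctly identifies where the audit could conceivably fail, but leaves it as an open worry rather than closing it, so let me point out the two facts that resolve it. First, the critical-phase upper bound is really a bound on the \emph{longest path} of each graph component, obtained from Lemma~\ref{excess_height} applied to the exploration tree and the surplus; since any simple path in any spanning tree of a component $\mathcal{C}$ is also a simple path in $\mathcal{C}$, and since the diameter of a tree equals its longest path, this bound automatically controls the diameter of \emph{every} spanning tree of that component, including whatever $T_{1/\ell_n}$ happens to choose. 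So the ``a priori $T_{1/\ell_n}$ could have diameter $n^{2/3}$'' scenario is ruled out precisely by the longest-path bound. Second, your concern about the supercritical-phase argument implicitly using the greedy Kruskal rule is dissolved by the observation, made explicitly in the paper, that once the initial forest $T_{1/\ell_n}$ is fixed, the requirement of being an increasing spanning-forest process uniquely determines $T_p$ for all $p>1/\ell_n$: whenever two components of $G(\textbf{W},p)$ first become joined by an edge, that (a.s.\ unique) edge is the only possible addition to $T_p$, so the merging edge and the component-growth dynamics coincide exactly with the Kruskal dynamics. One small inaccuracy in your lower-bound paragraph: the paper does not invoke a generic inequality $d_T\geq d_C$ on the supercritical giant (whose graph diameter is only logarithmic); instead it uses Theorem~\ref{lower_bound}, exhibiting with positive probability a \emph{tree} component of $G(\textbf{W},1/\ell_n)$ of diameter $\gtrsim n^{1/3}$, which any spanning forest of $G(\textbf{W},1/\ell_n)$ must contain in full and which then persists as a connected subtree of $T_{+\infty}$.
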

This corollary is true because our proofs only use the fact that $(\mathcal{T}(\textbf{W},p))_{p \geq 0}$ in an increasing process of spanning forests of $(G(\textbf{W},p))_{p \geq 0}$. And we never use any other property proper to $(\mathcal{T}(\textbf{W},p))_{p \geq 0}$. \par
This is thus a generalization of Theorems \ref{principal} and \ref{stats}. However, we emphasise the fact that given an increasing process of graphs for inclusion. An increasing process of spanning trees is uniquely determined by the spanning forest of the first graph in the process. The condition of being an increasing process of spanning trees is thus very strong and does not allow for much freedom in the choice of the process. Still, this is a nice addition that will be used in this article to tackle a conjecture from statistical physics. And we hope that it will find use in later work.

\subsection{Further discussion and related work in statistical physics}
\cite{LNB09} already showed a similar result to Theorem \ref{principal} with $\textbf{W}=(1,1,...,1)$. In this case the inhomogeneous random graph is in fact an Erd\H{o}s-Rényi random graphs. Our theorem is thus a generalization of theirs. It is also  another hint at the fact that when the weights of rank-1 inhomogeneous random graphs verify a third moment condition, which corresponds to Conditions \ref{cond_nodess}, then their asymptotic behavior is similar to Erd\H{o}s-Rényi random graphs. Meaning that the inhomogeneity disappears asymptotically. Such behavior was already remarked for other asymptotic properties (see for instance \cite{SRJ10}). \par
Another interesting result is in \cite{LNCG13} where the authors showed the existence of a measured metric space that is a scaling limit of the minimum spanning tree associated to Erd\H{o}s-Rényi random graph with distances rescaled by $n^{-1/3}$. In that article the concentration results that gave the  order of the diameter of the minimum spanning tree corresponding to $\textbf{W}=(1,1,...,1)$ were crucially used. A similar result was obtained for $3$-regular graphs with i.i.d. capacities on their edges in \citet*{addario2018geometry}. We will do the same for general weights verifying Conditions \ref{cond_nodess} in a subsequent article. \par

In an even more general setting, one can ease the Conditions \ref{cond_nodess2}. A large body of work (\citet*{CHE06}, \cite{BR07}, \citet*{RJJ17}, \citet*{RSJ18}, \citet*{br20}) is interested in weights related to power law distributions, the so called scale-free model. In that case the diameter is not expected to be of order $n^{1/3}$. If we suppose that the weights are drawn from a distribution with power law tail with parameter $4 \geq \alpha > 3$, then intuitive arguments suggest that the diameter of the minimum spanning tree should be of order $n^{\frac{\alpha-3}{\alpha-1}}$ (see \cite{br20}). The scaling limit of such trees should also be mutually singular from one another for different values of $\alpha$. Some of the tools presented here could be useful to prove results for weights with power law distributions. However, more work is required to prove those conjectures.  \par

The model presented here is closely related and generally called the Norros-Reitu model (\citet*{no06}) altough it is slightly different from the first model proposed by Norros and Reitu, since their model allows for multigraphs. This has no incidence on our results. In fact, our model is more closely related to the multiplicative coalescent introduced by Aldous in \citet*{AL97}, and further studied in \citet*{AL98} by Aldous and Limic. Moreover,  the results we present for this model also hold for equivalent models, in the sense of \citet*{janson2010asymptotic}. For instance, Chung-Lu (\citet*{CL06}) proposed a random graph with prescribed expected degrees. In that model two nodes $\{i,j\}$ are connected with probability:
$$
p_{i,j} = \frac{w_iw_j}{\ell_n}.
$$
It supposes that $\max_{i,j}(w_iw_j) \leq \ell_n$.
The following classical theorem (proved in \citet*{BSO07}) holds for all those graph models and is crucial in understanding the evolution of the graph process $(G(\textbf{W},p))_{p \geq 0}$. \par
\begin{Theorem}
\label{phase_trans}
Recall that $\tilde{p}_n = \frac{c}{\ell_n}$. Take $G(\textbf{W},\tilde{p}_n)$ and  suppose that Conditions \ref{cond_nodess} are verified, then the following results hold with high probability \footnote{We say that a sequence of events $E_n$ holds with high probability if $\lim_{n \rightarrow \infty}\mathbb{P}(E_n) =1$.}:
\begin{itemize}
\item \textbf{Subcritical regime}  If $c < 1$ then the largest connected component is of size $o(n)$.
\item \textbf{Supercritical regime}  If $c > 1$ then the largest connected component is of size $\Theta(n)$ and for any $i >1$ the $i$-th largest connected component is of size $o(n)$.
\item \textbf{Critical regime}  If $c = 1$ then for any $i \geq 1$ the $i$-th largest connected component is of size $\Theta(n^{2/3})$.
\end{itemize}
\end{Theorem}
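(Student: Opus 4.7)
The plan is to analyse the breadth-first walk (BFW) exploration of $G(\textbf{W},\tilde p_n)$ already defined in the paper, and to couple its offspring counts with a suitable Galton--Watson branching process. Given the BFW has reached vertex $v(k)$ with weight $w_{v(k)}$, each still-unexplored vertex $j$ becomes a child independently with probability $1-e^{-w_jw_{v(k)}\tilde p_n}$. When only $o(n)$ vertices have been explored and the weights satisfy Conditions \ref{cond_nodess}, this probability is close to $w_jw_{v(k)}\tilde p_n$, so the expected number of children is
\begin{equation*}
  c\cdot \frac{w_{v(k)}}{\ell_n}\sum_{j\notin\mathcal{V}_k}w_j \;\longrightarrow\; c\cdot w_{v(k)}\cdot \frac{\mathbb{E}[W^2]}{\mathbb{E}[W]}\,\frac{1}{\mathbb{E}[W]}\cdot\mathbb{E}[W]\; =\; c\cdot w_{v(k)}\cdot\frac{\mathbb{E}[W^2]}{\mathbb{E}[W]}.
\end{equation*}
Because, by Lemma~1 above, the root $v(1)$ is size-biased (so its weight is distributed like $\widehat W$ with $\mathbb{P}(\widehat W\in dx)=x\,\mathbb{P}(W\in dx)/\mathbb{E}[W]$) and the expected number of grand-children mixes back to $c\cdot \mathbb{E}[W^2]/\mathbb{E}[W]=c$ using condition $(iii)$, the limiting branching process has mean offspring exactly $c$. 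Criticality at $c=1$ is then read off from the standard dichotomy for Galton--Watson trees.

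For the \emph{subcritical} regime $c<1$, I would stochastically dominate the BFW from any starting vertex by a size-biased Galton--Watson tree whose offspring mean is $c'<1$ for $n$ large. Standard exponential tail bounds on the total progeny of a subcritical Galton--Watson process then give that every component has size $O(\log n)$ with high probability, hence $o(n)$. For the \emph{supercritical} regime $c>1$, I would combine a first moment bound on the expected number of vertices in components of size larger than $An$ (which matches the survival probability of a supercritical branching process) with a sprinkling argument to prove uniqueness of the giant: adding the edges with $\tilde p_n<E_{\{i,j\}}\le \tilde p_n+\varepsilon/\ell_n$ connects any two linear-sized components with probability tending to $1$, while the second-largest component is dominated by what remains of the subcritical cluster after removing the giant (a subcritical inhomogeneous graph on the leftover vertices).

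The hardest part is the \emph{critical} regime $c=1$. Here the differences between the drift of $L'$ and zero are of order $n^{-1/3}$, and one needs the Aldous-type scaling limit for the exploration process: the process $(n^{-1/3}L'(\lfloor tn^{2/3}\rfloor))_{t\ge 0}$ converges to a Brownian motion with a parabolic drift whose excursions above past minima encode the critical components. The required second and third moment estimates on the increments of $L'$ come exactly from conditions $(v)$ and $(vi)$, while condition $(vii)$ controls the maximum jump and ensures tightness. Reading off component sizes from excursion lengths of the limit and using continuity of the excursion-decomposition map gives that the $i$-th largest component has size of order $n^{2/3}$.

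I expect the main obstacle to be this last step, because one must also rule out anomalously long or short excursions uniformly in $i$; in practice the cleanest route is to invoke the precise critical scaling of \citet*{AL98} for the multiplicative coalescent, together with the equivalence (in the sense of \citet*{janson2010asymptotic}) between the Norros--Reittu graph studied here and the multiplicative coalescent at time $c$, which is exactly the level at which the result of \cite{BSO07} cited by the author is proved.
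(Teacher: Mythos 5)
The paper does not prove Theorem~\ref{phase_trans}: it explicitly invokes it as a ``classical theorem (proved in \citet*{BSO07})'' and uses it as a black box. There is therefore no internal argument to compare against; your blind sketch should be judged on its own terms. The overall branching-process route you outline is indeed the standard one for this statement, and your reading of which of Conditions~\ref{cond_nodess} feed into which regime (second/third moments for the critical scaling, the $\max_i w_i = o(n^{1/3})$ bound for tightness) is correct.

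There is, however, one genuine flaw. In the subcritical regime you claim all components have size $O(\log n)$ via ``standard exponential tail bounds on the total progeny of a subcritical Galton--Watson process.'' Those bounds require the offspring distribution to have exponential moments. Here the offspring law is (up to negligible error) a mixed Poisson with random parameter $c\,\widehat W$, where $\widehat W$ is the size-biased weight, and under Conditions~\ref{cond_nodess} one only has $\mathbb{E}[W^3] < \infty$, hence $\widehat W$ has merely a finite second moment and the offspring tail can decay polynomially. The total progeny of a subcritical Galton--Watson tree with such an offspring law is not sub-exponential, and the largest of the $\Theta(n)$ components of $G(\textbf{W},\tilde p_n)$ will in general be polynomially large, not logarithmic. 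The correct and needed conclusion, $o(n)$, is weaker and still holds; it follows for instance from a first-moment bound on $\sum_i \mathbbm{1}\{|\mathcal{C}(i)| \ge \varepsilon n\}$ using the $o(1)$ survival probability of the dominating subcritical branching process, but the $O(\log n)$ claim as written would fail. A minor further quibble: your displayed expected-offspring computation tacks on an extraneous $\mathbb{E}[W^2]/\mathbb{E}[W]$ factor \emph{conditionally on} $w_{v(k)}$; after $o(n)$ steps the conditional mean is simply $c\,w_{v(k)}(1+o(1))$, and the factor $\mathbb{E}[W^2]/\mathbb{E}[W]$ (which equals $1$ by condition $(iii)$ anyway) only appears after averaging over the size-biased weight of $v(k)$.
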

A simulation-based conjecture from statistical physics (\cite{CHE06}, \cite{WU06}, \cite{BR07}) is the following: Consider the largest component of $G(\textbf{W},\tilde{p}_n)$ for $c > 1$. Put i.i.d. capacities  derived from some continuous distribution on the edges of that component. Then typical distances in the minimal spanning tree constructed on the largest component with those i.i.d. capacities scale like $n^{1/3}$. Here typical distances mean distance between two uniformly drawn nodes from the tree. First, notice that as long as the distribution used is atomless, it has no incidence on the distribution of the geometry of the tree obtained. Indeed, only the relative order of the edge capacities is relevant for the geometry of the minimum spanning tree. Hence, putting i.i.d. capacities on the edges amounts to taking a uniform random order on them for our purpose. In order to prove this conjecture, it is sufficient to prove a result similar to Theorem \ref{stats} but for the type of minimum spanning trees present in the statistical physics literature. In fact, this minimum spanning tree is not that much different from the inhomogeneous minimum spanning trees that we study in this article. One can see that, with a little more work, the result from \cite{janson2010asymptotic} can be used to show that if Theorem \ref{stats} is true, then a similar theorem also holds for the minimum spanning trees constructed from i.i.d capacities on the giant component of supercritical inhomogeneuous random graphs. However, for clarity, after proving Theorem \ref{stats}, we will prove a modified version of it to show the conjecture from statistical physics using our results. We thus prove the following Theorem:
\begin{Theorem}
\label{stats_2}
Suppose that Conditions \ref{cond_nodess2} are verified. For  $c > 1$, and a sequence of parameters $ \tilde{p}_n \geq \frac{c}{\ell_n}$, put i.i.d. capacities with an atomless distribution on the edges of  the graphs $(\mathcal{G}(\textbf{W},\tilde{p}_n))_{n \geq 1}$. Let $\mathcal{T}'(\textbf{W},\tilde{p}_n )$ be the minimum spanning forest of  $\mathcal{G}(\textbf{W},\tilde{p}_n)$ corresponding to those edge capacities. Let
$(U_{f_n},V_{f_n})$ be two uniformly randomly drawn nodes from the largest tree in $\mathcal{T}'(\textbf{W},\tilde{p}_n )$, and let  $d(U_{f_n},V_{f_n})$ be the distance between them in $\mathcal{T}'(\textbf{W},\tilde{p}_n )$.  We have : 
$$
\mathbb{E}[d(U_{f_n},V_{f_n})] = \Theta(n^{1/3}).
$$
\end{Theorem}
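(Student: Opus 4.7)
The plan is to recast the minimum spanning forest $\mathcal{T}'(\textbf{W},\tilde{p}_n)$ as the terminal object of an increasing process of spanning forests of a graph process close to $(G(\textbf{W},p))_{p \geq 1/\ell_n}$, and then to invoke Corollary \ref{gen} after a coupling argument in the spirit of \cite{janson2010asymptotic}.

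First I introduce, independently of the capacities $(E_e)_e$ used to define $(G(\textbf{W},p))_p$, a family $(U_e)_{e \in E(K_n)}$ of i.i.d.\ uniform $[0,1]$ random variables and set
\[
H_q := \{\, e \in G(\textbf{W},\tilde{p}_n) : U_e \leq q \,\}, \qquad q \in [0,1].
\]
Then $(H_q)_{q \in [0,1]}$ is an increasing family of subgraphs of $G(\textbf{W},\tilde{p}_n)$ with $H_1 = G(\textbf{W},\tilde{p}_n)$. Conditionally on $G(\textbf{W},\tilde{p}_n)$, the variables $(U_e)_{e \in G(\textbf{W},\tilde{p}_n)}$ are i.i.d.\ uniform, so if I let $\widetilde{T}_q$ be the minimum spanning forest of $H_q$ with respect to $(U_e)$, then $\widetilde{T}_1$ has exactly the law of $\mathcal{T}'(\textbf{W},\tilde{p}_n)$ since only the ranks of the capacities matter. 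Moreover, because any edge added when $q$ grows has the largest $U_e$ among all edges of any cycle it creates, $(\widetilde{T}_q)_q$ is an increasing process of spanning forests of $(H_q)_q$.

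Next I compare $(H_q)_q$ with an honest rank-1 inhomogeneous graph process. The marginal probability that $\{i,j\}$ belongs to $H_q$ equals $q\,(1-e^{-w_i w_j \tilde{p}_n})$; since $\max_{i,j} w_i w_j \tilde{p}_n = o(1)$ under Conditions \ref{cond_nodess2}, this is, uniformly in $e$, asymptotic to $1 - e^{-w_i w_j \tilde{p}_n q}$, which is the marginal of $G(\textbf{W},\tilde{p}_n q)$. The asymptotic equivalence results of \cite{janson2010asymptotic} then furnish a coupling of the full processes $(H_q)_{q \in [0,1]}$ and $(G(\textbf{W},\tilde{p}_n q))_{q \in [0,1]}$ under which they coincide on an event of probability $1-o(1)$. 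On this event, $(\widetilde{T}_q)_q$ is literally an increasing process of spanning forests of $(G(\textbf{W},\tilde{p}_n q))_q$, so after the change of variable $p = \tilde{p}_n q$ the hypothesis of Corollary \ref{gen} is satisfied and yields a typical distance of order $\Theta(n^{1/3})$ in the largest tree of $\widetilde{T}_1$, and hence in $\mathcal{T}'(\textbf{W},\tilde{p}_n)$.

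The delicate step is transferring the \emph{expected} distance bound through the coupling. The upper bound is essentially free, since the deterministic estimate $\mathrm{diam} \leq n$ absorbs the contribution of the coupling failure event. The lower bound is more subtle, because Theorem \ref{stats_2} averages over two uniformly drawn vertices of the \emph{largest} tree; one therefore needs to check that, on the good coupling event, the largest trees of $\widetilde{T}_1$ and of the MSF in the coupled rank-1 process have comparable vertex sets, so that uniform sampling from one is close in total variation to uniform sampling from the other. I expect this to follow from Theorem \ref{phase_trans} together with the concentration estimates for the giant component coming from the exploration-process analysis of \cite{O20}, but it is the point that requires the most careful bookkeeping.
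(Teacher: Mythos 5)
Your overall strategy mirrors the paper's: recast $\mathcal{T}'(\textbf{W},\tilde p_n)$ as the terminal object of an increasing spanning-forest process indexed by the i.i.d.\ capacities, compare the ambient graph process edge-by-edge with a rank-$1$ inhomogeneous process, and finally invoke Corollary~\ref{gen}. The paper uses rate-$1$ exponential capacities and a two-stage construction (first thin $K_n$ with probability $w_iw_jc/\ell_n$, then filter by capacity), while you filter $G(\textbf{W},\tilde p_n)$ directly with uniforms; this is a cosmetic difference since only the ranks matter. The substantive divergence is in the coupling step, and that is where your argument has a gap.

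You claim that the asymptotic-equivalence results of \cite{janson2010asymptotic} "furnish a coupling of the full processes $(H_q)_{q\in[0,1]}$ and $(G(\textbf{W},\tilde p_n q))_{q\in[0,1]}$ under which they coincide on an event of probability $1-o(1)$", and then assert that the upper bound is "essentially free" because the deterministic estimate $\mathrm{diam}\leq n$ absorbs the failure event. This does not close. The per-edge appearance-time laws differ by $O((w_iw_j\tilde p_n)^2)$ in $L^1$ and by $O((w_iw_j\tilde p_n)^3)$ in squared Hellinger distance; summing over edges, the squared Hellinger distance of the two \emph{process} laws is $\Theta(1/n)$, giving a total-variation bound of order $n^{-1/2}$ (and one cannot do better, since $\sum_e\mathrm{TV}_e=\Theta(1)$). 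A coupling with failure probability $\Theta(n^{-1/2})$ is far too weak: the upper-bound transfer produces an error term $n\cdot n^{-1/2}=n^{1/2}\gg n^{1/3}$, and the lower-bound transfer needs failure probability $o(n^{-2/3})$. The paper's mechanism avoids this entirely by using a \emph{pathwise sandwich} $G(\textbf{W},\tfrac{c-\epsilon\ell_n^{-1/3}}{\ell_n})\subset G'(\textbf{W},+\infty)\subset G(\textbf{W},\tfrac{c+\epsilon\ell_n^{-1/3}}{\ell_n})$, which is a monotone edge-by-edge coupling valid with probability one (not $1-o(1)$), so there is no small-but-not-small-enough failure event to control; the price paid is only a $O(\ell_n^{-1/3})$ shift in the edge-density parameter, which Theorems~\ref{principal} and~\ref{stats} accommodate because $\tilde p_n$ is allowed to vary. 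To repair your argument along your original lines you would need either a much sharper coupling (exponentially small failure probability, which is not available here) or the sandwich device.
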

Another closely related conjecture concerns the power law distribution case. It is generally also conjectured that typical distances in the minimum spanning tree with i.i.d. capacities on the edges of largest components of graphs with nodes weights following a distribution with a power law tail of parameter $4 > \alpha > 3$, either in inhomogeneous random graphs as studied here, or in the configuration model (\cite{BR07}), scale like $n^{\frac{\alpha-3}{\alpha-1}}$. Proving the existence of a non-trivial scaling limits for $\mathcal{T}(\textbf{W},+\infty)$ when we change Conditions \ref{cond_nodess} to conditions pertaining to power law distribution would also prove such a conjecture. A first step in that direction was done very recently in \citet*{SB20}. The authors proved that the diameter of such trees is in fact of order $n^{\frac{\alpha-3}{\alpha-1}}$ in the inhomogeneous case and they even showed the existence of a non-trivial scaling limit for those trees. However, their work supposes restrictive conditions on the distribution other than just having power law tail. Thus, the general case remains open. \par

\begin{figure}[!tbp]
  \centering
  \begin{minipage}[b]{0.8\textwidth}
    \includegraphics[width=\textwidth]{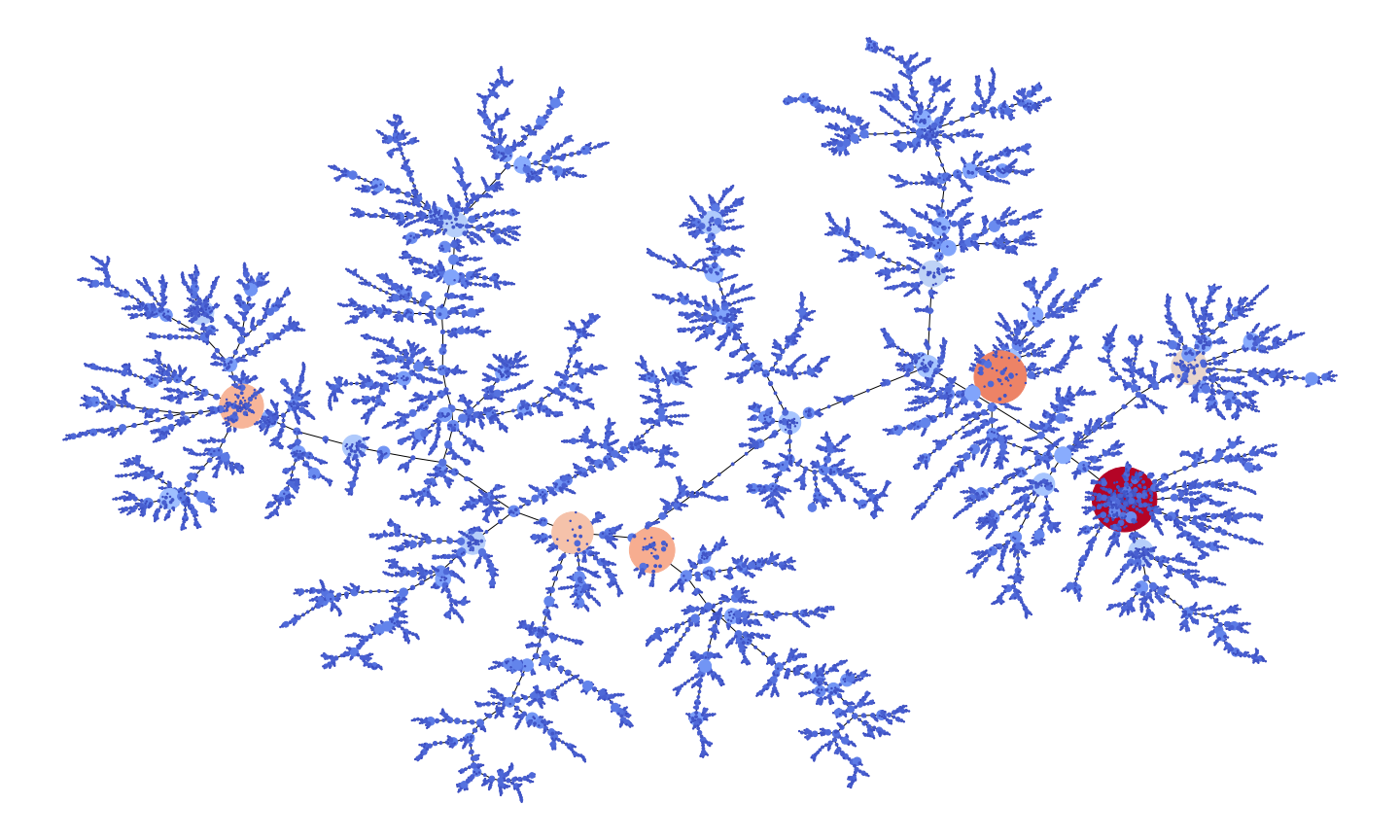}
    \caption{A minimum spanning tree built on the giant component of a scale-free supercritical inhomogeneous random graph with parameter $\alpha = 3.5$. The edge capacities are i.i.d.. Nodes are coloured from largest in red to smallest in dark-blue. Node sizes are proportionnal to the square of their degrees.}
  \end{minipage}
  \hfill
  \begin{minipage}[b]{0.8\textwidth}
    \includegraphics[width=\textwidth]{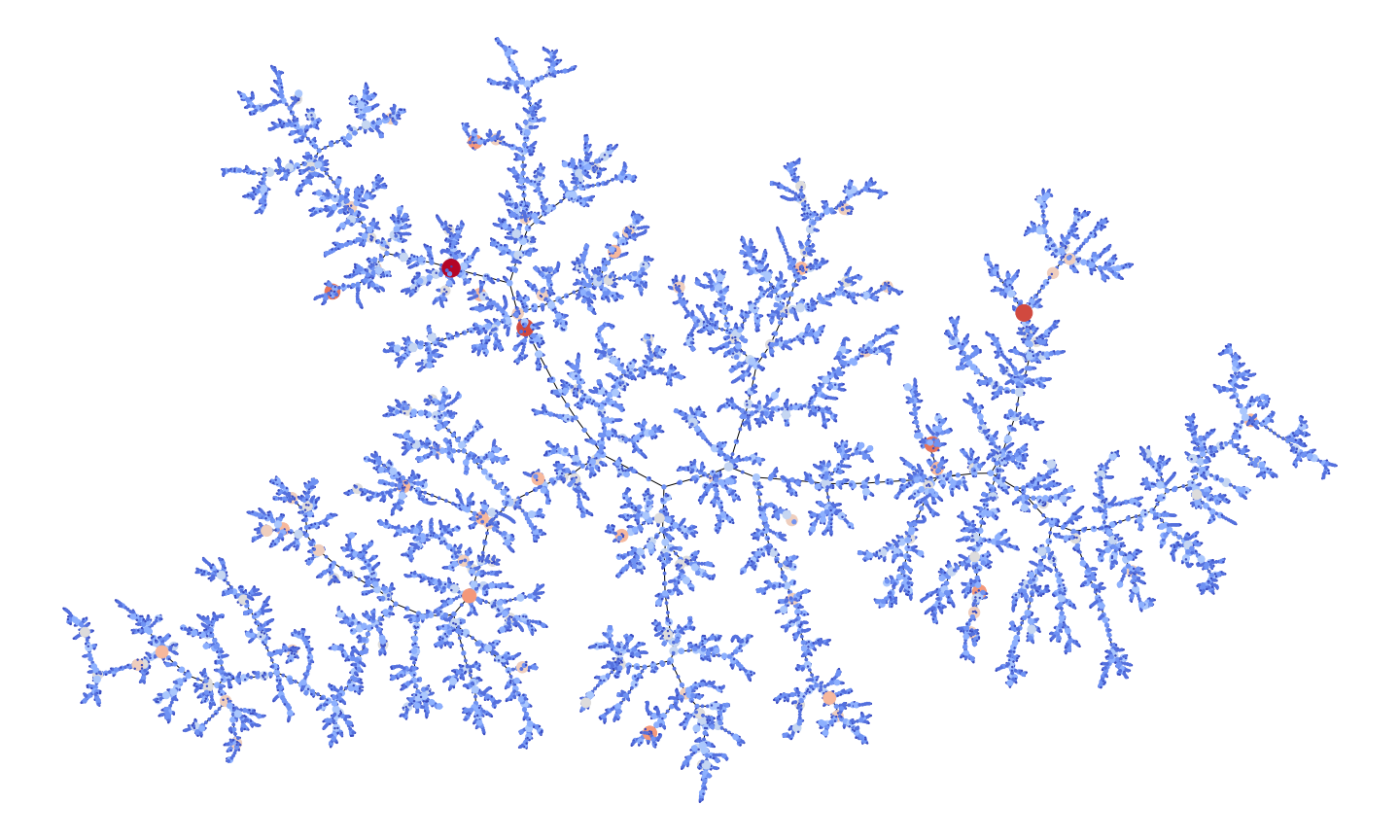}
    \caption{A minimum spanning tree built on the giant component of a  supercritical inhomogeneous random graph with node weights having finite third moments. The edge capacities are i.i.d.. Nodes are coloured from largest in red to smallest in dark-blue. Node sizes are proportionnal to the square of their degrees.}
  \end{minipage}
\end{figure}
Lastly in this introduction, we remind the reader of Bernstein's inequality (\citet*{B24}, \citet*{BML13}). This inequality will be used through this article in order to prove some concentration bounds. 
\begin{Lemma}
Let $X_1,X_2,...,X_n$ be i.i.d. random variables with finite second moment. Suppose that $|X_i| \leq M$, for all $i$. Then for any positive number $x$:
$$
\mathbb{P}\left(\middle|\sum_{i\geq 1}^n(X_i-\mathbb{E}[X_i])\middle| \geq x\right) \leq 2\exp \left ( \frac{-\tfrac{1}{2} x^2}{\sum_{i = 1}^n \mathbb{E} \left[X_i^2 \right ]+\tfrac{1}{3} Mt} \right ).
$$
\end{Lemma}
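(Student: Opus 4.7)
The plan is to prove this by the classical Cram\'er--Chernoff exponential moment method. Setting $Y_i = X_i - \mathbb{E}[X_i]$, so that $\mathbb{E}[Y_i]=0$ and $|Y_i|\leq 2M$, Markov's inequality applied to $\exp\bigl(\lambda \sum_i Y_i\bigr)$ gives, for any $\lambda > 0$,
\begin{equation*}
\mathbb{P}\!\left(\sum_{i=1}^n Y_i \geq x\right) \leq e^{-\lambda x}\prod_{i=1}^n \mathbb{E}\bigl[e^{\lambda Y_i}\bigr].
\end{equation*}
The two-sided statement will then follow by applying the same argument to $(-X_i)_{i\leq n}$ and a union bound, which accounts for the factor $2$ in the lemma.

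The core computation is to bound each moment generating factor. Taylor expanding and using $\mathbb{E}[Y_i]=0$ together with the crude estimate $|Y_i|^k \leq (2M)^{k-2} Y_i^2$ for $k \geq 2$, one obtains
\begin{equation*}
\mathbb{E}\bigl[e^{\lambda Y_i}\bigr] \;\leq\; 1 + \sum_{k\geq 2} \frac{\lambda^k (2M)^{k-2}\mathbb{E}[Y_i^2]}{k!} \;\leq\; \exp\!\left(\frac{\lambda^2 \mathbb{E}[X_i^2]}{2(1-\lambda M/3)}\right),
\end{equation*}
valid whenever $\lambda M < 3$; the bound $\mathbb{E}[Y_i^2] \leq \mathbb{E}[X_i^2]$ absorbs the centering. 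Multiplying over $i$, writing $\sigma^2 = \sum_{i=1}^n \mathbb{E}[X_i^2]$, and plugging into the Chernoff bound gives a cumulant-type upper bound $\exp\bigl(-\lambda x + \tfrac{\lambda^2 \sigma^2/2}{1-\lambda M/3}\bigr)$. Choosing $\lambda = x/(\sigma^2 + Mx/3)$ (which approximately minimizes the exponent) yields the stated inequality after simplification.

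The only real obstacle is bookkeeping: getting the constant $1/3$ correct in the denominator relies on the elementary estimate $\sum_{k\geq 2} \tfrac{u^{k-2}}{k!} \leq \tfrac{1}{2(1-u/3)}$ for $0 \leq u < 3$, which is a standard but slightly delicate Taylor comparison. There is no conceptual difficulty here, since the lemma is a textbook statement and the argument is precisely the classical proof of Bernstein's inequality; the only reason to reproduce it in this paper is to fix the exact form of the constants that will be used throughout the subsequent concentration arguments.
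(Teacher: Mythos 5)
Note first that the paper does not prove this lemma; it is stated as a classical result and referenced to Bernstein (1924) and Boucheron--Lugosi--Massart (2013), so there is no paper proof to compare against. Your overall strategy (Chernoff bound, MGF expansion, optimization over $\lambda$, union bound for the two-sided version) is the standard one and is sound in outline, and the final choice $\lambda = x/(\sigma^2 + Mx/3)$ does correctly extract the stated exponent from a cumulant bound of the form $-\lambda x + \tfrac{\lambda^2\sigma^2/2}{1-\lambda M/3}$.

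There is, however, a genuine error in the central display. Bounding $|Y_i|^k \leq (2M)^{k-2}Y_i^2$ forces the Taylor comparison $\sum_{k\geq 2} u^{k-2}/k! \leq \tfrac{1}{2(1-u/3)}$ to be applied at $u = 2\lambda M$, which yields $\mathbb{E}[e^{\lambda Y_i}] \leq \exp\bigl(\tfrac{\lambda^2\mathbb{E}[Y_i^2]}{2(1-2\lambda M/3)}\bigr)$, not $\exp\bigl(\tfrac{\lambda^2\mathbb{E}[X_i^2]}{2(1-\lambda M/3)}\bigr)$; the slack $\mathbb{E}[Y_i^2]\leq\mathbb{E}[X_i^2]$ cannot compensate for the change in the denominator (take $\mathbb{E}[X_i]=0$: the two second moments coincide while the denominators still differ). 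Carried through, your argument proves the weaker bound with $\tfrac{2}{3}Mx$ in place of $\tfrac{1}{3}Mx$. To recover $\tfrac{1}{3}Mx$ under the hypothesis $|X_i|\leq M$, expand the MGF of $X_i$ itself rather than of $Y_i$: since $|X_i|^k\leq M^{k-2}X_i^2$ for $k\geq 2$,
$$\mathbb{E}\bigl[e^{\lambda X_i}\bigr]\leq 1+\lambda\mathbb{E}[X_i]+\frac{\lambda^2\mathbb{E}[X_i^2]}{2(1-\lambda M/3)},$$
and then $\mathbb{E}[e^{\lambda Y_i}]=e^{-\lambda\mathbb{E}[X_i]}\mathbb{E}[e^{\lambda X_i}]\leq\exp\bigl(\tfrac{\lambda^2\mathbb{E}[X_i^2]}{2(1-\lambda M/3)}\bigr)$ by $1+u\leq e^u$. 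This is exactly where the statement's use of $\sum\mathbb{E}[X_i^2]$ rather than $\sum\mathrm{Var}(X_i)$ earns its keep; with the variance one would indeed be stuck with the factor $2M$ you ran into.
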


\section{First ingredients of the proof}

For $p \geq 0$. Any component of $G(\textbf{W},p)$ that is a tree is also a component of $\mathcal{T}(\textbf{W},p)$. Since $\mathcal{T}(\textbf{W},p) \subseteq \mathcal{T}(\textbf{W},+\infty)$, the diameter of any tree component of $G(\textbf{W},p)$ is a lower bound on $\textnormal{diam}(\mathcal{T}(\textbf{W},+\infty))$.
The following theorem is a simple corollary of the much more general convergence result in Theorem $2.4$ of \cite{br20}.
\begin{Theorem}
\label{lower_bound}
There exists $\epsilon > 0$ and $\epsilon' >  0$ such that for all $n$ large enough the largest connected component of $G(\textbf{W},\frac{1}{\ell_n})$ is a tree of diameter at least  $\epsilon' n^{1/3}$ with probability at least $\epsilon$.
\end{Theorem}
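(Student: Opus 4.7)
The plan is to read the statement directly off the scaling limit for critical rank-1 inhomogeneous random graphs established in Theorem~2.4 of \cite{br20}. Under Conditions~\ref{cond_nodess}, that theorem asserts that the connected components of $G(\textbf{W}, \tfrac{1}{\ell_n})$, listed in decreasing order of size and rescaled by $n^{-2/3}$ in mass and $n^{-1/3}$ in edge length, converge jointly in distribution (in the Gromov--Hausdorff--Prokhorov topology) together with their integer surpluses to a sequence $(\mathcal{C}_i^{\infty})_{i \geq 1}$ of compact measured metric spaces carrying non-negative integer marks. Writing $\mathcal{C}_1^{(n)}$ for the largest component of $G(\textbf{W}, \tfrac{1}{\ell_n})$, this yields in particular the joint convergence of $\mathrm{surplus}(\mathcal{C}_1^{(n)})$ and of $n^{-1/3}\,\mathrm{diam}(\mathcal{C}_1^{(n)})$ to the surplus and diameter of $\mathcal{C}_1^{\infty}$.

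I would then verify that the limiting object has a non-negligible chance of being a tree with macroscopic diameter. The surplus of $\mathcal{C}_1^{\infty}$ is, conditionally on the coding excursion, a Poisson random variable with almost surely finite intensity, so $\mathbb{P}(\mathrm{surplus}(\mathcal{C}_1^{\infty}) = 0) > 0$; on this event $\mathcal{C}_1^{\infty}$ is the real tree coded by a non-degenerate Brownian-type excursion, hence has almost surely strictly positive diameter. Therefore there exists $\delta > 0$ with
\[
p_\delta := \mathbb{P}\bigl(\mathrm{surplus}(\mathcal{C}_1^{\infty}) = 0,\ \mathrm{diam}(\mathcal{C}_1^{\infty}) \geq \delta\bigr) > 0.
\]

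Since the surplus is integer-valued and the diameter is continuous in the Gromov--Hausdorff topology, the Portmanteau theorem applied to this event gives
\[
\liminf_{n \to \infty} \mathbb{P}\bigl(\mathrm{surplus}(\mathcal{C}_1^{(n)}) = 0 \text{ and } \mathrm{diam}(\mathcal{C}_1^{(n)}) \geq \tfrac{\delta}{2}\, n^{1/3}\bigr) \geq p_\delta,
\]
and choosing $\epsilon' = \delta/2$ and $\epsilon = p_\delta/2$ produces the claimed statement for all sufficiently large $n$. The main obstacle is exactly what is already absorbed into the invocation of \cite{br20}: showing that the surplus and diameter pass jointly to the scaling limit together with the component size, under the $n^{-1/3}$ distance rescaling. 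Once that convergence is granted, the rest is just a continuous mapping argument combined with the identification of a positive-probability event of the form $\{\mathrm{surplus} = 0,\ \mathrm{diam} \geq \delta\}$ in the limit.
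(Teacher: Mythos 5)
Your proposal is correct and takes essentially the same route as the paper, which simply cites the scaling-limit result of Theorem 2.4 of \cite{br20} without spelling out the details; you have filled in exactly those details (joint GHP convergence of component, surplus, and rescaled diameter; positivity of $\mathbb{P}(\mathrm{surplus}=0,\ \mathrm{diam}\geq\delta)$ for the limit; a Portmanteau/open-set argument). The only cosmetic point is that Portmanteau should be applied to the open event $\{\mathrm{diam}>\delta/2\}$ (rather than the closed $\geq$), which you then dominate by $p_\delta$ via $\{\mathrm{diam}\geq\delta\}\subseteq\{\mathrm{diam}>\delta/2\}$; this does not affect the conclusion.
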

Theorem \ref{lower_bound} proves that there exist a constant $\epsilon>0$  such that $\mathbb{E}[\textnormal{diam}(\mathcal{T}(\textbf{W},+\infty))] \geq \epsilon n^{1/3}$. The rest of this paper is dedicated to proving the upper bound of Theorem \ref{principal}.
\subsection{The phase transition}
The edge deletion algorithm provides a coupling between $\mathcal{T}(\textbf{W},+\infty)$ and the graph 
$G(\textbf{W},+\infty)$. Hence, by studying the evolution of the diameter of the graphs in  
$(G(\textbf{W},p))_{p \in [0,+\infty]}$ we will be able to prove Theorem \ref{principal}. As a first step, we want to know what are the important values of $p$ in this process. That question was partly answered in Theorem \ref{phase_trans}. We see that there is a phase transition at $p=\frac{1}{\ell_n}$. It turns out that $p = p_f = \frac{1}{\ell_n}+\frac{f}{\ell_n^{4/3}}$, with $f \in \mathbb{R}$, constitutes a phase transition "window" for the size of the connected components sizes. In that window, the size and weight of the largest component increase smoothly. This is the so called critical regime. It is thus natural to investigate this critical regime in order to prove Theorem \ref{principal}. \par
We will show that the diameter of  $\mathcal{T}(\textbf{W},p_f)$ is a $O(n^{1/3})$, when $p = p_f = \frac{1}{n}+\frac{f}{n^{4/3}}$ with $f >0$ a large constant. Then it only grows by $O(n^{1/3})$ when $p$ becomes strictly larger than $\frac{1}{n}$.
The increase of the diameter in the supercritical phase is easy to study, the main problem is to show that when we cross the critical regime, $p_f = \frac{1}{\ell_n}+\frac{f_n}{\ell_n^{4/3}}$ with $f_n$ going to infinity, the diameter remains of order $n^{1/3}$. The value $f'_n = \frac{\ell_n^{1/3}}{\log(n)}$ which gives  $p_{f'_n} = \frac{1}{\ell_n}+\frac{1}{\log(n)\ell_n}$  constitutes the pivotal moment where we will switch arguments: $p_{f'_n}$ is chosen like this because it is neither a critical nor a real supercritical value. We say that it is in the barely supercritical regime. 
In the rest of this  section we will take Theorems \ref{well-behaved} and \ref{critical-diameter} for granted and prove Theorem \ref{principal}. The proofs of these theorems will be the focus of the rest of the paper. \par
\textbf{Notation}: In the remainder of the article we drop the $n$ from $f_n$ for clarity, $f$ will always be the critical parameter, moreover we will always assume $f = o(n^{1/3})$ and $f \geq F$, where $F >0$ is a constant independent of $n$ which is large enough for all our theorems to be true. Similarly $A, A', A'',... \in \mathbb{R}^+$ will always be large constants independent of $n$. And $\epsilon, \epsilon' , ... \in \mathbb{R}^+$ will always be small constants independent of $n$.
\begin{Theorem}
\label{well-behaved}
Assume Conditions \ref{cond_nodess2}, there exists a  positive constant $A >0$  such that: With probability at least $(1-\frac{1}{n})$, the largest component, $H(\textbf{W},p_{f'_n})$, of $G(\textbf{W},p_{f'_n})$ has total weight at least $\frac{n}{A\log(n)}$ and every other connected component has total weight at most $A\log(n)^{1/2}n^{1/2}$ and longest path  of length less than $A\log(n)^{1/4}n^{1/4}$.
\end{Theorem}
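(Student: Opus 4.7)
The statement is a standard barely-supercritical analysis of $G(\textbf{W}, p_{f'_n})$ at the threshold $p_{f'_n} = \ell_n^{-1}(1 + 1/\log n)$. The plan is to carry it out via the BFW exploration process $L'$, leveraging heavily the Galton--Watson couplings and near-critical estimates for rank-$1$ inhomogeneous graphs developed in \cite{O20}. At this value of $p$, the mean increment of $L'$ before a significant fraction of the total weight has been explored is approximately $1/\log n$, so the exploration resembles a slightly supercritical branching process with offspring mean $1 + \Theta(1/\log n)$ and (by Conditions \ref{cond_nodess2}) uniformly bounded third moment.

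I will first address the giant component. A near-critical survival estimate for the associated size-biased branching process gives survival probability $\rho \sim 2/\log n$. A first-moment computation then yields an expected $\Theta(n/\log n)$ vertices in "large" components (those of size at least $\sqrt{n}$, say), and a second-moment / sprinkling argument --- analogous to the Erd\H{o}s--R\'enyi proof, adapted via the size-biased couplings of \cite{O20} --- shows that with probability $1 - o(1/n)$ a single connected component has weight at least $n/(A\log n)$.

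Next, for the non-giant components, I will argue that once the giant has been discovered by $L'$, the residual exploration has a negative drift of order $1/\log n$ (the $\Theta(n/\log n)$ weight removed suffices to push the effective Malthusian parameter below $1$ by $\Omega(1/\log n)$). Bernstein's inequality applied to the centred increments across any interval of length $T$, together with a union bound over the $O(n)$ possible starting times, then shows that with probability at least $1 - n^{-2}$ every residual excursion of $L'$ has length at most $A' \log^3 n$. Combined with Condition (vii) this caps the weight of each non-giant component by $A' \log^3(n)\cdot \max_i w_i = o(n^{1/2})$, comfortably below $A \log^{1/2}(n)\, n^{1/2}$. For the longest-path bound, the surplus of each small component is $O(1)$ with probability $1 - O(n^{-2})$ (Bernstein applied to the count of back-edges seen during the component's exploration), so its longest simple path is bounded by a constant times the depth of its BFW exploration tree; the standard critical Galton--Watson depth estimate gives depth $O(\sqrt{S\log n}) = O(\log^2 n)$ for a component of size $S \leq A' \log^3 n$, far below $A\log^{1/4}(n)\, n^{1/4}$.

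The main obstacle is the uniqueness of the giant: showing that the $\Theta(n/\log n)$ "surviving" vertices actually lie in a single connected component, rather than in a handful of components of comparable size. This will be handled either by a sprinkling argument at a $p$ slightly below $p_{f'_n}$ --- using the extra $\Theta(n/\log^2 n)$ edges added between $p-\epsilon$ and $p$ to merge any competing large components --- or directly from the uniqueness-of-giant results of \cite{O20}. The latter is likely the intended route, since the paper already relies on that machinery throughout.
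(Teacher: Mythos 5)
Your proposal takes a genuinely different route from the paper. The paper's proof of Theorem~\ref{well-behaved} is a short corollary of results already established: the weight statements come directly from Theorem~\ref{principal_a} specialized to $f = f'_n = \ell_n^{1/3}/\log n$ (with, say, $\epsilon = 1/2$, so that the bound $\ell_n^{2/3}/f^{1-\epsilon}$ on the weight of non-giant components becomes $\Theta(n^{1/2}\log^{1/2}n)$ and the failure probability $A\exp(-f'^{1/4}_n/A)$ is far below $1/n$), and the longest-path bound follows from Theorem~\ref{them15} at the final snapshot $f(t_n)\approx f'_n$, which yields $\ell_n^{1/3}/f'^{1/4}_n = \Theta(n^{1/4}\log^{1/4}n)$. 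Your plan instead rebuilds the barely-supercritical analysis from scratch via branching-process survival estimates, Bernstein excursion bounds, and sprinkling; if carried out rigorously it would in fact produce sharper conclusions (second-largest component of polylogarithmic size and depth) than the deliberately slack $n^{1/2}\log^{1/2}n$ and $n^{1/4}\log^{1/4}n$ bounds the paper states, which are all the sprinkling step in Section 2.2 requires.

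The caveat is that you substantially underestimate the effort needed to make the self-contained argument rigorous in the inhomogeneous setting, and this gap is precisely where the technical content of \cite{O20} and of Section 3 of the paper lives. The increments of $L'$ are neither i.i.d.\ nor stationary: the offspring law at step $k$ depends on the already-explored set $\textbf{V}_{k-1}$ and on the size-biased order in which weights appear, and the drift $-\Theta(1/\log n)$ of the residual exploration after the giant has been removed is not a one-line observation but the content of Lemma~\ref{esperances} (controlling $\mathbb{E}[w_{\tilde{v}(l)}\mathbbm{1}(K(i,\epsilon'))]$) combined with the Poisson Galton--Watson coupling of Section 3.2. Writing ``Bernstein applied to the centred increments across any interval of length $T$'' elides the conditioning on $\tilde{\mathcal{V}}_i$ and the size-biasing that make the increments exchangeable rather than independent; the paper handles this through Lemma~\ref{sum_capacities} and the conditional Chernoff bound. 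Likewise your $O(1)$-surplus claim for small components and the depth estimate both require the specific slightly-subcritical Galton--Watson comparison set up in Lemmas~\ref{lem9}--\ref{lem13}, not a generic critical-tree estimate. Your closing remark — that uniqueness of the giant should be pulled from \cite{O20} rather than re-derived — is exactly right, and the same logic applies to all the other ingredients: the intended proof is simply to specialize the already-proven uniform-in-$f$ estimates to $f = f'_n$.
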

\begin{Theorem}
\label{critical-diameter}
Assume Conditions \ref{cond_nodess2},  then:
$$ \mathbb{E}[\text{diam}(H(\textbf{W},p_{f'_n}))] = O(n^{1/3})$$
\end{Theorem}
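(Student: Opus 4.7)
The plan is to bound the height of the BFW spanning tree of $H(\textbf{W},p_{f'_n})$ via a coupling with a barely supercritical Galton--Watson tree, and then to convert this bound into the expected-diameter estimate.

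\textbf{Reduction to height.} The exploration tree of the component containing the BFW root $v(1)$ is a spanning tree of that component. Any two vertices of a rooted tree are joined by a path of length at most $2h$ (through a common ancestor), where $h$ is the height, and graph distances are bounded by tree distances in any spanning tree. Hence $\mathrm{diam}(H)\leq 2h$. By Theorem~\ref{well-behaved}, the giant $H$ has total weight at least $n/(A\log n)$, so with probability $1-o(1)$ the size-biased first vertex $v(1)$ lies in $H$; on the complement we restart the BFW from a vertex of $H$ and the argument is identical.

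\textbf{Coupling with a Galton--Watson tree.} Given $\mathcal{V}_i$ and $v(i+1)$, the number of children $c(i+1)$ is a sum of independent Bernoullis of parameters $1-e^{-w_{v(i+1)} w_j p_{f'_n}}$ over $j\notin\mathcal{V}_i$, with conditional mean at most
\begin{equation*}
w_{v(i+1)}\, p_{f'_n}\sum_{j\notin\mathcal{V}_i}w_j \;\leq\; w_{v(i+1)}(1+1/\log n).
\end{equation*}
Averaging over the size-biased choice of $v(i+1)$ (the lemma stated above) and using the equality $\mathbb{E}[W^2]=\mathbb{E}[W]$ from Condition~(iii) of \ref{cond_nodess}, the net offspring mean is at most $1+O(1/\log n)$; Condition~(ii) controls the variance uniformly. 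A sequential stochastic-domination argument (the ``novel couplings'' announced in the introduction) then produces a Galton--Watson tree $Y$ of offspring mean $1+C/\log n$ and bounded variance that dominates the generation sizes of the BFW spanning tree of $H$ up to the first time when half of the total weight has been explored.

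\textbf{Height bound and conclusion.} Standard estimates for a Galton--Watson tree of offspring mean $1+\epsilon$ and finite second moment give that, conditionally on total progeny at most $n$, the height is at most $K(\log n)/\epsilon$ with probability $\geq 1-n^{-10}$. With $\epsilon=C/\log n$, this bounds the height of the BFW spanning tree of $H$ during its first half by $O(\log^2 n) = o(n^{1/3})$. Past the halfway stopping time, the unexplored part of $H$ has weight at most $\ell_n/2$, and a companion subcritical-type coupling (in the vein of the longest-path bound $O(n^{1/4}(\log n)^{1/4})$ from Theorem~\ref{well-behaved}) shows that the additional depth is also $o(n^{1/3})$. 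On this good event $\mathrm{diam}(H)=o(n^{1/3})$; on its complement (probability $\leq n^{-10}$) we use the trivial bound $\mathrm{diam}(H)\leq n$, contributing $o(1)$ to the expectation. Combining, $\mathbb{E}[\mathrm{diam}(H(\textbf{W},p_{f'_n}))]=O(n^{1/3})$.

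\textbf{Main obstacle.} The difficult part is producing a Galton--Watson coupling valid \emph{uniformly in time}, over the $\Omega(\log^2 n)$ generations needed to exhaust the giant. The weight depletion of the unexplored pool slowly reduces the effective offspring mean, and the size-biased BFW occasionally selects a vertex of atypically large weight whose offspring law is far from $\mathrm{Poi}(1+1/\log n)$. The BFW's size-biased ordering fortunately drains heavy vertices from the unexplored pool early, which is the structural fact the coupling exploits, but turning this intuition into a uniform stochastic domination is the technical core of the proof.
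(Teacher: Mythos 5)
Your proposal takes a genuinely different route from the paper. Rather than tracking the diameter snapshot-by-snapshot through the critical window (the paper's approach in Sections 3--5), you attempt a direct Galton--Watson coupling of the BFW spanning tree of the giant at the single value $p_{f'_n}$. The resulting bound you aim for, $\textnormal{diam}(H)=O(\log^2 n)=o(n^{1/3})$, is actually stronger than the paper's $O(n^{1/3})$, and is likely the correct order for the graph diameter of the giant at this barely supercritical parameter. However, the argument as written has a genuine gap at its technical core, which you partly flag yourself at the end.

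The critical problem is the step ``conditionally on total progeny at most $n$, the height is at most $K(\log n)/\epsilon$ with probability $\geq 1-n^{-10}$.'' The dominating tree $Y$ is a \emph{supercritical} Galton--Watson tree of mean $1+C/\log n$; it is infinite with probability $\Theta(1/\log n)$, in which case its height is $+\infty$ and the domination $\textnormal{ht}(T_H)\leq\textnormal{ht}(Y)$ is vacuous. You cannot freely condition on $\{|Y|\leq n\}$, because the domination was established between $T_H$ and the \emph{unconditioned} $Y$; once you condition $Y$ on a rare event, the coupled pair $(T_H,Y)$ no longer has the law you need, and in fact $T_H$ itself has $\Theta(n/\log n)$ vertices, so $\{|Y|\leq n\}$ and the domination together force $Y$ to be in a thin slice of its law. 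This is precisely the obstruction the paper resolves in Section~4.1 by the \emph{pruning} trick: independent Bernoulli cuts of probability $2f/\ell_n^{1/3}$ are added to the explored edges, which pushes every coupled Galton--Watson tree strictly below criticality (offspring mean $\leq 1 - cf/\ell_n^{1/3}$, cf.\ Equation \eqref{eqf12}); the heights of these \emph{subcritical} trees are finite a.s.\ and controllable, the number of cut-pieces is controlled by Lemma~\ref{lem23}, and a recursive re-cutting argument (Theorem~\ref{Theoo18}) gives $\textnormal{ht}(H)=O(f^2\ell_n^{1/3})$. Without such a device, your ``sequential stochastic-domination argument'' is not a proof.

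Two further points. First, the claim ``with probability $1-o(1)$ the size-biased first vertex $v(1)$ lies in $H$'' is false: by Theorem~\ref{well-behaved} the total weight of $H$ is only $\geq n/(A\log n)$ while $\ell_n=\Theta(n)$, so $\mathbb{P}(v(1)\in H)=\Theta(1/\log n)$. This is why the paper runs the full BFW up to a deterministic time $t$ at which the giant is known to be exhausted, rather than conditioning on the root landing in the giant. Second, the paper's snapshot decomposition buys something your approach does not: it controls the \emph{longest path} of $H$, not merely its graph diameter (via Lemma~\ref{excess_height} applied component-by-component where the excess is bounded). The longest-path control is what Lemma~\ref{croiss} and Theorem~\ref{final-theorem} actually consume to propagate the bound to the spanning forest $\mathcal{T}(\textbf{W},\cdot)$, since $\textnormal{diam}$ of a spanning tree is bounded by $\textnormal{lp}$ of the host graph, not by its diameter. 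Your argument, even if completed, would only bound $\textnormal{diam}(H)$, which is not on its own enough to feed the rest of the paper's program.
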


\subsection{The supercritical phase}
Here we take Theorems \ref{well-behaved} and \ref{critical-diameter} for granted. Let $\mathcal{C}$ be a connected component, with vertices set $V(\mathcal{C})$  and edge set $D(\mathcal{C})$ , and suppose that $\mathcal{C}$ is not the largest connected component of $G(\textbf{W},p_{f'_n})$. Let $\{I,J\}$ be the random edge with exactly one endpoint (say $I$) in $\mathcal{C}$  and with minimal capacities $E_{\{I,J\}}$ among edges with exactly one endpoint in $\mathcal{C}$. By construction, $\{I,J\}$ will necessarily be in $\mathcal{T}(\textbf{W}; +\infty)$. We want to calculate $\mathbb{P}(J \in V(H(\textbf{W},p_{f'_n})))$, the probability that the other endpoint of the edge $\{I,J\}$ lies in the largest component $H(\textbf{W},p_{f'_n})$ with vertices set $V(H(\textbf{W},p_{f'_n}))$ and edge set $D(H(\textbf{W},p_{f'_n}))$.
Recall that for any couple $\{i,j\}$, $E_{\{i,j\}}$ is an exponential random variable of parameter $w_iw_j$. So if we fix $\{i,j\}$, by  classical results on exponential random variables: 
$$ \mathbb{P}\left(\forall l \neq i, l \neq j \quad E_{\{i,j\}}  < E_{\{i,l\}} \right) = \frac{w_j}{\sum_{k=1}^{n}w_k}.$$
Similarly, write:
$$
\textbf{U} = \left\{i \in V(\mathcal{C}), \exists j \in V(H(\textbf{W},p_{f'_n})), \forall l \in (\mathcal{V}\setminus (V(\mathcal{C}) \cup \{j\}) \; E_{\{i,j\}}  < E_{\{i,l\}}\right\}.
$$
Then:
\begin{equation*}
\begin{aligned}
&\mathbb{P}\left( \textbf{U}   |   (V(H(\textbf{W},p_{f'_n})), V(\mathcal{C}))\right)  = \frac{\sum_{k\in V({H(\textbf{W},p_{f'_n}))}}{w_k}}{\sum_{k'=1}^{n}w_{k'}-\sum_{k''\in V(\mathcal{C})}w_{k''}}.
\end{aligned}
\end{equation*} 
Which implies that: 
\begin{equation}
\label{eq1}
\mathbb{P}\left(J \in V(H(\textbf{W},p_{f'_n})) |  (V(H(\textbf{W},p_{f'_n})), V(\mathcal{C}))\right)  = \frac{\sum_{k\in {H(\textbf{W},p_{f'_n})_V}}{w_k}}{\sum_{k'=1}^{n}w_{k'}-\sum_{k''\in V(\mathcal{C})}w_{k''}}.
\end{equation}
By Theorem \ref{well-behaved} and Equation \eqref{eq1}, we get for $n$ large enough:
\begin{equation*}
\begin{aligned}
1- \mathbb{P}\left(J \in V(H(\textbf{W},p_{f'_n}))\right) 
&\leq 1-\frac{n}{A\log(n)(n-A\log(n)^{1/2}n^{1/2})}-\frac{1}{n} \\
&\leq 1-\frac{2}{A\log(n)},
\end{aligned}
\end{equation*}
If $J$ is not in $V(H(\textbf{W},p_{f'_n}))$, then $J$ lies in another connected component $\mathcal{C}'$, with vertix set $V(\mathcal{C}')$ and edge set $D(\mathcal{C}')$. So the longest path of the newly created component will be at most $2A\log(n)^{1/4}n^{1/4}$ with probability at least $1-\frac{2}{A\log(n)}$.
Let $\{I',J'\}$ be the edge with the smallest capacities with exactly one endpoint $I'$ in  the newly created connected component $\mathcal{C}''$ which is the concatenation of $\mathcal{C}$ and $\mathcal{C}'$ through the edge $\{I,J\}$.  By the same argument as before, conditionally on the event $F$ that $J$ is not in $V(H(\textbf{W},p_{f'_n}))$, we get for $n$ large enough:
\begin{equation*}
\begin{aligned}
1-\mathbb{P}\left(J' \not \in V(H(\textbf{W},p_{f'_n}))) | F\right) 
&\leq 1-\frac{n}{\log(n)(n-2A\log(n)^{1/2}n^{1/2})}- \frac{1}{n} \\
&\leq 1-\frac{2}{A\log(n)}.
\end{aligned}
\end{equation*}
By an immediate induction, for any $r_n$ small enough, the  probability that $\mathcal{C}$ is in a component of longest path larger than $r_nA\log(n)^{1/4}n^{1/4}$ when it connects to the largest connected component is at most $(1-\frac{2}{A\log(n)})^{r_n}$.
By taking $r_n = A(\log(n))^2$, it follows, using the inequality $$\left(1-\frac{2}{A\log(n)}\right) \leq \exp\left(\frac{-2}{A\log(n)}\right),$$ that the probability that the longest path of $\mathcal{C}$ reaches $A^2\log(n)^{9/4}n^{1/4}$ before connecting to the largest component is less than $\frac{1}{n^2}$. 
Since $\mathcal{C}$ was arbitrary, and since there are at most $n$ such components. We get by the union bound that the probability that there is a component that has a path longer than $A^2\log(n)^{9/4}n^{1/4}= o(n^{1/3})$ when it connects to the largest connected component is at most $\frac{1}{n}$. 
\bigbreak

In order to link the diameters of nested graphs we use the following lemma (Also used in \cite{LNB09}).
\begin{Lemma}
\label{croiss}
Let $H'$ and $H$ be two connected graphs such that $H' \subset H$. Then 
$$diam(H) \leq diam(H') + 2lp(H(V - V(H'))) + 2.$$
Here $lp$ stands for longest path and $H(V - V(H'))$ is the graph $H$ from which we deleted the nodes of $H'$ and any edge that had at least one endpoint in $H'$.
\end{Lemma}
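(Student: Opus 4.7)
The plan is a straightforward case analysis on the location of the two endpoints of a diametral pair, exploiting the fact that $H'$ is connected and lies inside the connected graph $H$.

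Fix two vertices $u,v\in V(H)$ realising $\mathrm{diam}(H)$. I first claim the auxiliary bound: for every $u\in V(H)\setminus V(H')$, the distance $d_H(u,V(H'))$ is at most $\mathrm{lp}(H(V-V(H')))+1$. Indeed, since $H$ is connected, there exists a path in $H$ from $u$ to $V(H')$; consider such a path of minimal length and let $u=u_0,u_1,\ldots,u_k,x$ be this path, where $x\in V(H')$ is the first vertex of the path lying in $V(H')$. By minimality the vertices $u_0,\ldots,u_k$ all lie in $V-V(H')$, so $u_0,\ldots,u_k$ is a path in $H(V-V(H'))$ of length $k\le \mathrm{lp}(H(V-V(H')))$. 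Adding the last edge $\{u_k,x\}$ gives the claim.

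Now split into three cases. If both $u,v\in V(H')$, then since $H'\subset H$ any path in $H'$ is a path in $H$, giving $d_H(u,v)\le d_{H'}(u,v)\le \mathrm{diam}(H')$. If exactly one endpoint, say $v$, lies outside $V(H')$, pick $y\in V(H')$ with $d_H(v,y)\le \mathrm{lp}(H(V-V(H')))+1$ by the auxiliary claim, and bound
$$d_H(u,v)\le d_H(u,y)+d_H(y,v)\le \mathrm{diam}(H')+\mathrm{lp}(H(V-V(H')))+1.$$
If both $u,v\notin V(H')$, apply the auxiliary claim to each to obtain $x,y\in V(H')$ with $d_H(u,x),d_H(v,y)\le \mathrm{lp}(H(V-V(H')))+1$, then use the triangle inequality together with $d_H(x,y)\le \mathrm{diam}(H')$ to conclude
$$d_H(u,v)\le \mathrm{diam}(H')+2\,\mathrm{lp}(H(V-V(H')))+2.$$
This third case dominates the other two and yields the stated inequality.

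There is no real obstacle here; the only point worth flagging is that in the auxiliary claim one must invoke minimality (or equivalently, stop at the \emph{first} crossing into $V(H')$) to guarantee that the initial portion of the path is genuinely a path inside $H(V-V(H'))$ — otherwise it might reuse vertices of $V(H')$ and the bound by $\mathrm{lp}(H(V-V(H')))$ would not apply. Once that observation is in place the lemma is immediate from two applications of the triangle inequality.
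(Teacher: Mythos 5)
Your proof is correct and follows essentially the same route as the paper: reach $H'$ from each endpoint via a path through $H(V-V(H'))$ of length at most $\mathrm{lp}(H(V-V(H')))+1$, then travel within $H'$, then concatenate. The paper simply asserts the existence of the two connecting paths without the explicit minimality/first-crossing argument you supply, and omits the (trivial) case split, but the underlying decomposition is identical.
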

\begin{proof}
In order to show the lemma it is sufficient to show that between any two vertices $H$ there exist a path of length at most than $diam(H') + 2lp(H(V - V(H'))) + 2$.
Let $x,y$ be two vertices of $H$, one can create a path from $x$ to $y$ in the following way. 
\begin{itemize}
    \item There exists a path $\pi_1$  between $x$ and a vertex of $H'$ that has length at most $ lp(H(V - V(H'))) + 1$, let $x'$ be the arrival vertex of this path. 
    \item There exists a path $\pi_2$  between $y$ and a vertex of $H'$ that has length at most $ lp(H(V - V(H'))) + 1$, let $y'$ be the arrival vertex of this path. 
    \item There exists a path $\pi_3$ between $x'$ and $y'$ inside $H'$ that has length at most $diam(H')$. 
\end{itemize}

Concatenating the three paths $\pi_1$, $\pi_2$ and $\pi_3$ provides the desired path.

\end{proof}
Lemma \ref{croiss} and the discussion above imply that: 
\begin{Theorem}
\label{final-theorem}
Under conditions \ref{cond_nodess2}:
$$ \mathbb{E}\left[\textnormal{diam}(\mathcal{T}(\textbf{W}; +\infty))-\textnormal{diam}(\mathcal{T}(\textbf{W}; f'_n))\right] =  O(\log(n)^{9/4}n^{1/4}).$$
\end{Theorem}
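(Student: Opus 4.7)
The plan is to combine Lemma \ref{croiss} with the inductive merging estimate already developed in the paragraphs above. First I will apply Lemma \ref{croiss} with $H$ taken to be the tree inside $\mathcal{T}(\textbf{W}; +\infty)$ that contains the giant $H(\textbf{W}, p_{f'_n})$, and $H'$ the subtree of $\mathcal{T}(\textbf{W}; f'_n)$ spanning that same vertex set. This gives
$$\textnormal{diam}(H) \leq \textnormal{diam}(H') + 2\,\textnormal{lp}\bigl(H(V - V(H'))\bigr) + 2,$$
and since $\textnormal{diam}(H') \leq \textnormal{diam}(\mathcal{T}(\textbf{W}; f'_n))$, the task reduces to controlling the longest path of $\mathcal{T}(\textbf{W}; +\infty)$ among vertices that lie outside the giant.

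To handle that longest path, I will use the event $E_n$ described in the discussion above: every small component of $G(\textbf{W}, p_{f'_n})$ participates in at most $r_n = A(\log n)^2$ merges with other small components before being absorbed into the giant by an edge of $\mathcal{T}(\textbf{W}; +\infty)$. The geometric estimate $(1 - 2/(A\log n))^{r_n} \leq 1/n^2$ combined with a union bound over the at-most-$n$ small components gives $\mathbb{P}(E_n^c) \leq 1/n$. On $E_n$ each such merge concatenates at most two paths of length $\leq A\log(n)^{1/4} n^{1/4}$ through a single new edge, so by induction every connected subtree of $\mathcal{T}(\textbf{W}; +\infty)$ disjoint from the giant has longest path at most $A^2 \log(n)^{9/4} n^{1/4}$.

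To conclude, I will split the expectation according to $E_n$. On $E_n$, combining the two bounds above yields $\textnormal{diam}(\mathcal{T}(\textbf{W}; +\infty)) - \textnormal{diam}(\mathcal{T}(\textbf{W}; f'_n)) \leq 2A^2 \log(n)^{9/4} n^{1/4} + 2$; on $E_n^c$ the difference is crudely bounded by $n$, so its contribution to the expectation is at most $\mathbb{P}(E_n^c) \cdot n \leq 1$. Summing gives the claimed $O(\log(n)^{9/4} n^{1/4})$ bound. Given that the hard part, namely the iterative probability estimate and the longest-path-through-merges bookkeeping, has already been carried out, there is no real obstacle; the only care needed is checking that Lemma \ref{croiss}'s longest path outside $V(H')$ is exactly what the inductive merging analysis controls, which is transparent since edges of $\mathcal{T}(\textbf{W}; +\infty)$ leaving the giant are the absorption edges, and internal edges of an absorbed small component are the merging edges from the induction.
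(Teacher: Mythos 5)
Your proof is correct and follows the same route the paper takes: the merging estimate developed just before the theorem controls, with failure probability at most $1/n$, the longest path of every pendant tree of $\mathcal{T}(\textbf{W};+\infty)$ hanging off the giant of $G(\textbf{W},p_{f'_n})$, and Lemma~\ref{croiss} converts that into the diameter increment, with the crude bound of $n$ on the bad event absorbing its $O(1)$ contribution to the expectation. You have in fact made the final step slightly more explicit than the paper, which merely states that "Lemma~\ref{croiss} and the discussion above imply" the result.
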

\noindent Theorem $1$ follows from this theorem and Theorem \ref{critical-diameter}.
Now what is left is proving Theorems \ref{well-behaved} and \ref{critical-diameter}. This is what we will do in the rest of the paper. 

\subsection{Known results}
Here we remind the reader of the main theorems in  \cite{O20} that will be used in this article. Let:
$$
C = \frac{\mathbb{E}[W^3]}{\mathbb{E}[W]} \geq 1.
$$
\begin{Theorem}
\label{principal_a}
There exists a constant $A >0$, such that for any $1 > \epsilon' > 0$, $1 \geq \epsilon >0$,  and $f = o(n^{1/3})$ large enough with probability at least:
$$
1-A\exp\left(\frac{-f^{\epsilon/2}}{A}\right),
$$
all the following events occur in $G(\textbf{W},p_{f})$:

\begin{itemize}
    \item The size of the largest component is in the interval $$\left[\frac{2(1-\epsilon'/2)f\ell_n^{2/3}}{C},\frac{2(1+\epsilon'/2)f\ell_n^{2/3}}{C}\right].$$
    \item Every other connected component has size less than $\frac{A\ell_n^{2/3}}{f^{1-\epsilon}}$.
    \item The weight of the largest component is in the interval $$\left[\frac{2(1-\epsilon')f\ell_n^{2/3}}{C},\frac{2(1+\epsilon')f\ell_n^{2/3}}{C}\right].$$
    \item Every other connected component has weight less than $\frac{(1+\epsilon')A\ell_n^{1/3}}{f^{1-\epsilon}}$.
    \item The surplus of the largest connected component is less than $Af^3$.
    \item The surplus of any other connected component is less than $Af^{\epsilon}$.
\end{itemize}
\end{Theorem}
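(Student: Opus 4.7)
My plan is to analyze the breadth-first-walk exploration process $L'$ introduced above, adapting the classical exploration approach for critical random graphs to the rank-$1$ inhomogeneous setting. The central computation is the drift of $L'$. Write $S_i = \sum_{k\le i} w_{v(k)}$ for the total weight of the first $i$ discovered vertices; in the barely-supercritical regime $i \lesssim f\ell_n^{2/3}$ one has $S_i \approx i$ to leading order. A Taylor expansion of
\begin{equation*}
\mathbb{E}[c(i+1) \mid \mathcal{F}_i, v(i+1)] \;=\; \sum_{j \in \mathcal{U}(i+2)} \bigl(1 - e^{-w_{v(i+1)} w_j p_f}\bigr)
\end{equation*}
combined with the size-biased identity $\mathbb{E}[w_{v(i+1)} \mid \mathcal{V}_i] = \sum_{j\notin\mathcal{V}_i} w_j^2/(\ell_n - S_i)$, its analogue for $w_{v(i+1)}^2$, and Conditions \ref{cond_nodess} (in particular $\sum w_j^2 \approx \ell_n$ and $\sum w_j^3 \approx C\ell_n$, so that $\mathbb{E}[w_{v(i+1)} \mid \mathcal{V}_i] \approx 1 - (C-1)i/\ell_n$) yields
\begin{equation*}
\mathbb{E}[L'_{i+1} - L'_i \mid \mathcal{F}_i] \;\approx\; \frac{f}{\ell_n^{1/3}} \;-\; \frac{C\, i}{\ell_n},
\end{equation*}
so that $\mathbb{E}[L'_i] \approx fi/\ell_n^{1/3} - Ci^2/(2\ell_n)$ is a concave parabola peaking at $i^* = f\ell_n^{2/3}/C$ and first returning to zero at $2f\ell_n^{2/3}/C$, matching the stated size of the largest component.

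To turn this deterministic skeleton into high-probability statements I would decompose $L'_i = \mathbb{E}[L'_i] + M_i$ with $M$ a martingale whose increments are bounded by $O(\max_j w_j) = o(n^{1/3})$ and whose conditional variance is $O(1)$ per step, and apply Bernstein's inequality to obtain $|M_i| \le A\ell_n^{1/3} f^{1/2+\epsilon/4}$ uniformly over $i \le 3f\ell_n^{2/3}/C$ with probability at least $1 - A\exp(-f^{\epsilon/2}/A)$. Since the parabolic peak is of order $f^2\ell_n^{1/3}$, this fluctuation is negligible on the relevant scale, and the first return time of $L'$ to its running minimum after the peak is pinned within $\epsilon'/2$ of its deterministic prediction, delivering the largest-component size bound. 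A parallel Bernstein estimate applied to the weight-accumulation process $S_i$ (using condition (v)) gives the weight bound for the largest component. For every subsequent excursion the drift is now strictly negative of order $-f/\ell_n^{1/3}$, and a standard hitting-time estimate for a random walk with negative drift and bounded increments, together with a union bound over the at most $n$ candidate excursions, yields the uniform bounds $A\ell_n^{2/3}/f^{1-\epsilon}$ on size and $(1+\epsilon')A\ell_n^{1/3}/f^{1-\epsilon}$ on weight.

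For the surplus I would count, during each exploration step, the edges from the freshly explored $v(i+1)$ back into the already-discovered set $\mathcal{D}(i+1)\setminus\{v(i+1)\}$: conditional on $\mathcal{F}_i$ and $v(i+1)$, their number has mean approximately $w_{v(i+1)} p_f \sum_{j\in \mathcal{D}(i+1)} w_j = O(L'_i/\ell_n)$. Summing over the $\Theta(f\ell_n^{2/3})$ exploration steps of the giant, where $L'$ has peak height $\Theta(f^2\ell_n^{1/3})$, gives expected total surplus $\Theta(f^3)$, which a Poisson-type concentration promotes to the high-probability bound $Af^3$; for a smaller excursion of height $O(\ell_n^{1/3}/f^{(1-\epsilon)/2})$ and length $O(\ell_n^{2/3}/f^{1-\epsilon})$ the same computation yields $O(f^\epsilon)$. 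The main technical obstacle I anticipate is the coupling between $L'$ and its driving auxiliary processes: the drift formula depends on the random sums $S_i$ and $\sum_{k\le i} w_{v(k)}^q$ for $q = 2, 3$, so the Bernstein argument must be run in parallel on this coupled system, and Condition \ref{cond_nodess2}(viii) (the fourth-moment bound on $W$) is needed to control the increment variance of the $q=3$ sum. Once this coupled concentration is established on a single high-probability event, the excursion and surplus analysis proceeds in the standard way.
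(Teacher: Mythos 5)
The paper does not prove Theorem~\ref{principal_a}: it is quoted verbatim as a ``known result'' imported from \cite{O20} (it is Theorem~1 there) and then used as a black box throughout. There is therefore no in-paper argument to line your sketch up against. That said, the machinery the paper repeatedly attributes to \cite{O20} --- the BFW exploration process, size-biased vertex ordering, Bernstein concentration, and control of the empirical moments $\sum_{k\le i} w_{v(k)}^q$ --- is exactly the machinery you reach for, so your sketch is very likely the same route as \cite{O20}. Your drift computation giving the parabola $fi/\ell_n^{1/3} - Ci^2/(2\ell_n)$ is correct, the $\Theta(f^3)$ surplus count is the right back-of-envelope, and your identification of where Condition~\ref{cond_nodess2}(viii) enters (to control the increments of the third-power sum) matches the paper's remark that (viii) is ``only present for technical reasons in \cite{O20}.''

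A few places where the sketch glosses over steps that require genuine work in a full proof. First, in the window $f = o(n^{1/3})$ the largest component has weight $\Theta(f\ell_n^{2/3}) = o(\ell_n)$, so the size-biased root $v(1)$ lies outside it with probability tending to $1$: the giant is not the first excursion, and you must separately bound the time $i_0$ at which the giant's exploration begins (showing $i_0 = o(f\ell_n^{2/3})$ and that the accumulated drop in the running minimum before $i_0$ is negligible) before your parabolic argument can deliver the size within $(1\pm\epsilon'/2)$. Second, your uniform fluctuation bound over $i \le 3f\ell_n^{2/3}/C$ must come from a maximal form of Bernstein's (or Azuma--Doob) inequality rather than a union bound over time steps: a union bound costs a factor of order $f\ell_n^{2/3}$, which cannot be absorbed into $\exp(-f^{\epsilon/2}/A)$ when $f$ is merely a large constant. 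Third, for the surplus bound $Af^{\epsilon}$ on the non-giant components, the expected back-edge count per component is $o(1)$ but you must union bound over potentially $\Theta(n)$ components while still landing at failure probability $\exp(-\Omega(f^{\epsilon/2}))$; the standard fix is to show that all but $\mathrm{poly}(f)$ of the small components are so small that they carry surplus $0$ with probability $1 - O(n^{-2})$ each, and the remaining $\mathrm{poly}(f)$ candidates then admit the Poisson-tail bound you describe.
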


This theorem contains many important statements for our proof. It gives bounds on the size, weight and surplus of the largest connected component of rank-1 inhomogeneous random graphs in the critical window, and also of its small components. For instance, by taking $f = f'_n$ in the above theorem we get the statement about the weights in Theorem \ref{well-behaved}. The analysis in \cite{O20} also provides bounds on the time the giant component is discovered during the exploration process. These results show a large part of Theorems \ref{well-behaved} and \ref{critical-diameter}. However they are not enough to bound the diameter of the graph. Information about the length of paths in the graph is still lacking.
Although Theorem \ref{principal_a} contains a bound on the surplus of each connected component of $G(\textbf{W},p_{f'_n})$, it is not sufficient to prove upper bounds on the height of any spanning trees of the connected components in order to finish the proof. This is due to the fact that the surplus of the connected components of $G(\textbf{W},p_{f'_n})$ is too large. In the remainder of the paper we will use a "snapshot" trick to fix this problem.
\section{Dealing with the small components}

In order to bound the diameter of a graph using its excess and its height we will use the following lemma.
\begin{Lemma}
\label{excess_height}
Let  $G$ be a connected graph of excess $q$ and such that there exist a spanning tree $T$ of this graph of height $h$, then the longest path of the graph is of length at most  
$$
2h(q+1)+q.
$$
\end{Lemma}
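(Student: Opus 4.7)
The plan is to exploit the tree $T$ as a skeleton and treat the $q$ edges outside $T$ as ``shortcuts'' that a path can use at most once each. I would first recall that, by definition, a connected graph of excess $q$ on the same vertex set as $T$ has exactly $q$ edges not belonging to $T$. Then I would fix a longest path $\pi$ in $G$ and look at its edge sequence, distinguishing the edges of $\pi$ that lie in $T$ (call them \emph{tree edges}) from those that do not (\emph{chord edges}).

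The first key step is that $\pi$, being a simple path, traverses each chord edge at most once, so $\pi$ contains at most $q$ chord edges. These chord edges cut $\pi$ into at most $q+1$ maximal subpaths consisting solely of tree edges. Each such subpath is a path in $T$ between two of its vertices.

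The second key step is a purely tree-theoretic bound: in a rooted tree of height $h$, the distance between any two vertices is at most $2h$, since one can travel from either endpoint up to their least common ancestor in at most $h$ steps. Consequently each of the at most $q+1$ tree-only subpaths has length at most $2h$.

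Summing contributions gives length of $\pi$ at most $(q+1)\cdot 2h + q = 2h(q+1)+q$, which is exactly the claim. The only mild subtlety is making sure the decomposition of $\pi$ into alternating tree-segments and chords is performed correctly (including the degenerate case where two chords are consecutive in $\pi$, in which case the intermediate tree-segment has length $0$, still within the bound); but this is essentially bookkeeping. I do not anticipate a real obstacle here, since the argument is a direct counting one and does not require any of the probabilistic machinery developed earlier in the paper.
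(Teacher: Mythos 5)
Your proof is correct, and it takes a genuinely different (and in fact cleaner) route than the paper. The paper's proof is incremental: it starts from $T$, adds the $q$ excess edges one at a time, and tracks how the longest path can grow with each addition, invoking a somewhat loosely justified induction and an auxiliary family of edge-disjoint maximal tree paths $D_1,D_2,\dots$. You instead fix a longest path $\pi$ in $G$ once and for all, note that a simple path can traverse at most $q$ of the non-tree edges, and decompose $\pi$ into at most $q+1$ maximal tree-only segments separated by those chords; each segment is the unique $T$-path between its endpoints and hence has length at most $2h$. Both arguments ultimately rest on the same two facts --- the diameter of a height-$h$ rooted tree is at most $2h$, and a simple path uses at most $q$ chords --- but your static decomposition makes the counting transparent, avoids tracking a changing ``longest path'' across an induction, and handles the degenerate cases (consecutive chords giving length-zero segments) without any extra work. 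No gap.
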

\begin{proof}
Let $\{D_1,D_2,D_3,....\}$ be the set of edge-disjoint paths of maximum lengths in $T$ taken in decreasing order of their lengths, if two paths of the same length exist at some point we choose one of them arbitrarily. We can create $G$ by adding the excess edges consecutively to $T$. \\

Recursively, the first edge added will create a new longest path of length at most $|D_1|+|D_2|+1 \leq 4h+1$. The second edge added will at most create a new longest path containing $D_1$,$D_2$ and $D_3$ and of length $|D_1|+|D_2|+|D_3|+2 \leq 6h+2$. Hence, a direct induction shows that after adding $q$ excess edges the longest path we can have is of length $2h(q+1)+q$.
\end{proof}
If we want to use this lemma, then bounding directly the height of the exploration tree of the largest component of  $G(\textbf{W},p_{f'_n})$ will not be enough to prove Theorem \ref{critical-diameter}. Indeed, Theorem \ref{principal_a} states that the excess of such a component is upper bounded by $f_{n}^{'3}= \frac{\ell_n}{\log(n)^3}$. This is already much larger than the $n^{1/3}$ of Theorem \ref{critical-diameter}. In order to circumvent this problem we will use the following steps. \par
\begin{enumerate}
\item Define a sequence of "snapshots", $(p_{f(i)})_{i \geq 0}$ such that: $$f(0) = F$$ is a  large constant, and 
$$f(i+1) = \frac{3}{2}f(i),$$
for any $i \geq 0$. 
\item For every $i \geq 0$, let $\tilde{\mathcal{V}}_i$ be the set of nodes of the largest connected component of $G(\textbf{W},p_{f(i+1)})$, and let $G(\mathcal{V} \setminus \tilde{\mathcal{V}}_i,p_{f(i+1)})$ be the sub-graph of $G(\textbf{W},p_{f(i+1)})$ from which we have taken out the nodes of the largest component of $G(\textbf{W},p_{f(i)})$ alongside any edge that has one of those nodes as endpoint. Show that the diameter of this graph is bounded by $\frac{A\ell_n^{1/3}}{f(i+1)^{1/4}}$ w.h.p. This will also prove Theorem \ref{well-behaved}.
\item Show that the diameter of the largest component of $G(\textbf{W},p_{f(i)})$ is smaller than $Af^5\ell_n^{1/3}$ w.h.p.
\item Use the precedent steps to show that when moving from $G(\textbf{W},p_{f(i)})$ to $G(\textbf{W},p_{f(i+1)})$, the diameter of the graph does not increase significantly. This will prove Theorem \ref{critical-diameter}.
\end{enumerate}
We provide the details for step $2$ in the present section, step  $3$ in section \ref{sec4}, and step $4$ in the last section.
\subsection{Construction of the growth of small components}
For $i \geq 1$, we want to bound the size, excess and diameter of the graph comprised of $G(\textbf{W},p_{f(i+1)})$  from which we have taken out the vertices of the largest component in $G(\textbf{W},p_{f(i)})$. Recall that we denote the set of those vertices by $\tilde{\mathcal{V}}_i$, and that graph by $G(\mathcal{V} \setminus \tilde{\mathcal{V}}_i,p_{f(i+1)})$. Bounding this diameter will allow us to bound the growth of the giant component when moving from $G(\textbf{W},p_{f(i)})$ to $G(\textbf{W},p_{f(i+1)})$. Instead of studying $G(\mathcal{V} \setminus \tilde{\mathcal{V}}_i,p_{f(i+1)})$ from scratch, we will show that w.h.p, that graph resembles the graph $G(\mathcal{V} \setminus \tilde{\mathcal{V}}_{i+1},p_{f(i+1)})$ which is a critical inhomogeneous random graph from which we have taken the largest component, and which has already been studied extensively in   \cite{O20}. 
Let $K(i,\epsilon')$ be the following event: 
\begin{enumerate}
    \item The size of the largest component of $G(\textbf{W},p_{f(i)})$ is in the interval
$$ \left[ \frac{2(1-\epsilon'/2)f(i)\ell_n^{2/3}}{C}-\frac{\ell_n^{2/3}}{f(i)C}, \frac{2(1+\epsilon'/2)f(i)\ell_n^{2/3}}{C}\right].$$
\item The total weight of the largest connected component of $G(\textbf{W},p_{f(i)})$ is in the interval
$$
\left[\frac{2(1-\epsilon')f(i)\ell_n^{2/3}}{C},\frac{2(1+\epsilon')f(i)\ell_n^{2/3}}{C}\right].
$$
\end{enumerate}
By Theorems $1$ from  \cite{O20}, there exists a large constant $A > 0$  such that $K(i,\epsilon')$  happens with probability at least:
\begin{equation}
\label{eq8}
1-A\exp\left(\frac{-f(i)}{A}\right).
\end{equation}
Let $(\tilde{v}(1),\tilde{v}(2),..)$ be the ordered labels in the exploration process of the graph $G(\mathcal{V} \setminus \tilde{\mathcal{V}}_i,p_{f(i+1)})$. Conditionally on the nodes of the largest component $\tilde{\mathcal{V}}_i$, $G(\mathcal{V} \setminus \tilde{\mathcal{V}}_i,p_{f(i+1)})$ is an inhomogeneous random graph with vertices label set $\mathcal{V} \setminus \tilde{\mathcal{V}}_i$ and probability transition $p_{f(i+1)}$.   Moereover,  since the event $K(i,\epsilon')$ is measurable with regard to $\tilde{\mathcal{V}}_i$. The same proofs of Lemmas $7$, $8$, $9$, $10$ and $33$ from  \cite{O20} also yields similar results if we replace the $v(j)$'s with $\tilde{v}(j)$'s. We prove one of those results as an example here, and direct the reader to  \cite{O20} for the rest of the proofs. Let $t = \frac{2(1+\epsilon'/2)f(i)\ell_n^{2/3}}{C}$.
\begin{Lemma}
\label{sum_capacities}
For any $n-t \geq l > 0$,  we have:
$$\mathbb{P}\left(\mathbbm{1}(K(i,\epsilon'))\sum_{k=1}^{l}w_{\tilde{v}(k)}\geq 2l^{1/2}\ell_n^{1/2}\right) \leq \exp\left({\frac{-l^{1/2}n^{1/6}}{4}}\right).$$
\end{Lemma}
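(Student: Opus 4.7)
Since $K(i,\epsilon')$ is measurable with respect to $\tilde{\mathcal V}_i$, it suffices to prove the conditional bound
$$
\mathbb P\!\left(S_l\ge 2l^{1/2}\ell_n^{1/2}\,\Big|\,\tilde{\mathcal V}_i\right)\le \exp\!\left(-\tfrac{l^{1/2}n^{1/6}}{4}\right)
$$
on every realisation of $\tilde{\mathcal V}_i$ on which $K(i,\epsilon')$ holds, where $S_k:=\sum_{k'\le k}w_{\tilde v(k')}$ and $W(\tilde{\mathcal V}_i):=\sum_{j\in\tilde{\mathcal V}_i}w_j$. The key input is the analogue of Lemma 1 for the BFW on the induced subgraph on $\mathcal V\setminus\tilde{\mathcal V}_i$ (same proof as in \cite{O20}): with $\mathcal F_{k-1}=\sigma(\tilde v(1),\ldots,\tilde v(k-1))$,
$$
\mathbb P(\tilde v(k)=j\mid\mathcal F_{k-1})=\frac{w_j\,\mathbbm{1}\!\bigl(j\notin\{\tilde v(1),\ldots,\tilde v(k-1)\}\cup\tilde{\mathcal V}_i\bigr)}{\ell_n-W(\tilde{\mathcal V}_i)-S_{k-1}}.
$$
From here I would run a martingale Bernstein argument on the centred increments $Y_k:=w_{\tilde v(k)}-\mathbb E[w_{\tilde v(k)}\mid\mathcal F_{k-1}]$.

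\textbf{Moment estimates and Bernstein step.} On $K(i,\epsilon')$, $W(\tilde{\mathcal V}_i)=O(f(i)\ell_n^{2/3})=o(\ell_n)$ since $f(i)=o(n^{1/3})$. Combined with Conditions~\ref{cond_nodess}(iv)--(vi) and the identity $\mathbb E[W^2]=\mathbb E[W]$, this shows that as long as $S_{k-1}\le\ell_n/4$, the denominator above is at least $\ell_n/2$, yielding $\mathbb E[w_{\tilde v(k)}\mid\mathcal F_{k-1}]\le 2\sum_j w_j^2/\ell_n\le 3$ and $\mathbb E[w_{\tilde v(k)}^2\mid\mathcal F_{k-1}]\le 2\sum_j w_j^3/\ell_n\le 3C$ for $n$ large; by Condition~(vii), $|Y_k|\le 2\max_i w_i=o(n^{1/3})$. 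If $l\ge \ell_n/4$ the event is empty because $S_l\le\ell_n<2l^{1/2}\ell_n^{1/2}$, so assume $l<\ell_n/4$. Introduce the stopping time $\tau:=\min\{k\le l:S_k\ge 2l^{1/2}\ell_n^{1/2}\}\wedge l$; on the event of interest $\tau\le l$ and $S_\tau\ge 2l^{1/2}\ell_n^{1/2}$, while $\sum_{k\le\tau}\mathbb E[w_{\tilde v(k)}\mid\mathcal F_{k-1}]\le 3l\le l^{1/2}\ell_n^{1/2}$ as soon as $l\le\ell_n/9$. Thus $\sum_{k\le\tau}Y_k\ge l^{1/2}\ell_n^{1/2}$, and Freedman's martingale Bernstein inequality gives
$$
\mathbb P\!\left(\textstyle\sum_{k\le\tau}Y_k\ge l^{1/2}\ell_n^{1/2}\right)\le \exp\!\left(-\frac{l\ell_n/2}{3Cl+\tfrac{2}{3}(\max_i w_i)\,l^{1/2}\ell_n^{1/2}}\right).
$$
Since $\max_i w_i=o(n^{1/3})$ and $\ell_n=\Theta(n)$, the bounded-increment term dominates the denominator, and the right-hand side is at most $\exp(-l^{1/2}n^{1/6}/4)$ for $n$ large. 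The narrow window $\ell_n/9\le l<\ell_n/4$ is handled by the same inequality: the required deviation is now of order $\ell_n$, and the Bernstein exponent is of order $\ell_n/\max_i w_i\gg l^{1/2}n^{1/6}$.

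\textbf{Main obstacle.} The delicate step is recognising that the bounded-increment term $Mx/3$ (with $M=\max_i w_i$), rather than the variance term, is what controls the exponent; this is precisely how Condition~(vii) translates into the $n^{1/6}$ factor and the constant $1/4$. The stopping-time bookkeeping, the trivial case $l\ge\ell_n/4$, and the mild modification of Lemma 1 (replacing $\ell_n$ by $\ell_n-W(\tilde{\mathcal V}_i)$, which is harmless on $K(i,\epsilon')$) are routine, and the whole argument is essentially a direct transcription of the analogous lemma in \cite{O20}.
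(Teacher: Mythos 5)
Your argument is correct and yields the stated bound, but it takes a genuinely different route from the paper. The paper reduces to i.i.d.\ sampling via a moment-generating-function domination result for size-biased sampling without replacement (Theorem~1 of \citet{AYS16}), introducing i.i.d.\ copies $\tilde J(1),\ldots,\tilde J(l)$ of $\tilde v(1)$ and then applying the classical Bernstein inequality to those; the variance and mean of $w_{\tilde J(1)}$ are computed to be $1+o(1)$ by the moment conditions, and the $n^{1/3}$ cap on the weights drives the exponent. You instead run Freedman's martingale Bernstein inequality directly on the filtration $\mathcal F_k=\sigma(\tilde v(1),\ldots,\tilde v(k))$, with a stopping time to keep the denominator $\ell_n - W(\tilde{\mathcal V}_i) - S_{k-1}$ controlled. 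Both reach the same punchline: because $\max_i w_i=o(n^{1/3})$ while the target deviation is $l^{1/2}\ell_n^{1/2}$, the bounded-increment term in Bernstein dominates and produces the $l^{1/2}n^{1/6}/4$ exponent. Your route is self-contained (no appeal to \citet{AYS16}) at the cost of the stopping-time and boundary-case bookkeeping (the trivial range $l\ge\ell_n/4$, the window $\ell_n/9\le l<\ell_n/4$); the paper's route is shorter once the domination theorem is taken as a black box and lines up more naturally with the i.i.d.\ Bernstein lemma it quotes in the introduction. Both are valid; yours is an acceptable and arguably more transparent alternative. One small caution: in the window $\ell_n/9\le l<\ell_n/4$ the centred target $2l^{1/2}\ell_n^{1/2}-3l$ dips below $l^{1/2}\ell_n^{1/2}$ (it is only bounded below by roughly $\ell_n/4$), so that case really is a separate mini-computation rather than a corollary of the main one; you flag it correctly, but a clean write-up should spell out that the required deviation there is $\ge \ell_n/4$ so the Bernstein exponent, of order $\ell_n/\max_i w_i$ or better, still dwarfs $l^{1/2}n^{1/6}$.
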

\begin{proof}
Let $\tilde{J}(1),..,\tilde{J}(l)$ be i.i.d. copies of $\tilde{v}(1)$ conditionnally on $K(i,\epsilon')$. Recall that, by Conditions \ref{cond_nodess},  $\max_{i\leq n}(w_i) = o(n^{1/3})$.
Write $C_l = l^{1/2}\ell_n^{1/2}$,  since the event $K(i,\epsilon')$ is measurable with regard to $\tilde{\mathcal{V}}_i$, we can apply Theorem $1$ in \citet*{AYS16} and Markov's inequality to obtain:
\begin{equation}
\begin{aligned}
\label{rer}
\mathbb{P}\left(\sum_{k=1}^{l}w_{\tilde{v}(k)}\geq 2C_l \middle| K(i,\epsilon')\right) &\leq \frac{ \mathbb{E}[\exp(\sum_{k=1}^{l}w_{\tilde{v}(k)}) | K(i,\epsilon')]}{\exp(2C_l)} \\
&\leq   \frac{ \mathbb{E}[\exp(\sum_{k=1}^{l}w_{\tilde{J}(i)})]}{\exp(2C_l)}.
\end{aligned}
\end{equation}
Moreover: 
\begin{equation*}
\begin{aligned}
\mathbb{E}\left[w_{\tilde{J}(1)}^2\right] &= \mathbb{E}\left[\left.\sum_{k \not\in \tilde{\mathcal{V}}_i}\frac{w_k^3}{\ell_n-\sum_{k' \in \tilde{\mathcal{V}}_i}w_k'} \right| K(i,\epsilon') \right] \\
&= \mathbb{E}\left[\left.\sum_{k \not\in \tilde{\mathcal{V}}_i}\frac{w_k^3}{\ell_n} \right| K(i,\epsilon') \right](1+o(1)) \\
&= 1-\mathbb{E}\left[\left.\sum_{k \not\in \tilde{\mathcal{V}}_i}\frac{w_k^3}{\ell_n} \right| K(i,\epsilon') \right](1+o(1))+o(1).
\end{aligned}
\end{equation*}
We have by definition of $K(i,\epsilon')$:
$$
\mathbb{E}\left[\left.\sum_{k \not\in \tilde{\mathcal{V}}_i}\frac{w_k^3}{\ell_n} \right| K(i,\epsilon') \right] \leq \sum_{k =1}^{t} \frac{w_k^3}{\ell_n}.
$$
It has already been shown in  \cite{O20} (proof of Lemma $8$) that 
$
\sum_{k =1}^{t} \frac{w_k^3}{\ell_n} = o(1).
$
Thus:
\begin{equation}
\label{rer1}
\mathbb{E}\left[w_{\tilde{J}(1)}^2\right] = 1 + o(1).
\end{equation}
And similarly:
\begin{equation}
\label{rer2}
\mathbb{E}\left[w_{\tilde{J}(1)}\right] = 1 + o(1).
\end{equation}
By Equations \eqref{rer1}, \eqref{rer2} and the  Bernstein's inequality (\cite{B24}) for the $w_{\tilde{J}(i)}$ that stems from the Chernoff bound in Equation \eqref{rer}, we obtain: 
\begin{equation}
\begin{aligned}
\mathbb{P}\left(\mathbbm{1}(K(i,\epsilon'))\sum_{k=1}^{l}w_{\tilde{v}(k)}\geq 2C_l\right) &= \mathbb{P}\left(\sum_{k=1}^{l}w_{\tilde{v}(k)}\geq 2C_l \middle| K(i,\epsilon')\right)\mathbb{P}(K(i,\epsilon'))\\
&\leq \exp\left(\frac{-(-l\mathbb{E}[w_{\tilde{J}(1)}]+2C_l)^2}{2(n^{1/3}C_l+l\mathbb{E}[w_{\tilde{J}(1)}^2])}\right) \\
&\leq \exp\left(\frac{-C_l^2}{4(n^{1/3}C_l)}\right) \\
&\leq  \exp\left(\frac{-l^{1/2}n^{1/6}}{4}\right).
\end{aligned}
\end{equation}
\end{proof}

For $n-t \geq  l \geq 0$ let $E_l = \{\sum_{k=1}^{l}w_{\tilde{v}(k)} < 2l^{1/2}\ell_n^{1/2}\}$. Similarly, the following four lemmas are also  slight modifications of Lemmas $8$, $9$, $10$ and $33$ from  \cite{O20}.
\begin{Lemma}
\label{mean_poww_capacities_2}
Recall that $C = \frac{\mathbb{E}[W^3]}{\mathbb{E}[W]}$.
For any $l=o(n)$, and $l \geq u > 1$:
$$\mathbb{E}(w_{\tilde{v}(u)}^2\mathbbm{1}(K(i,\epsilon')) | E_{l}) = C+o(1),$$
and:
$$\mathbb{E}(w_{\tilde{v}(u)}^2\mathbbm{1}(K(i,\epsilon'))) = C+o(1).$$
\end{Lemma}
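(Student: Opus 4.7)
My plan is to reduce the lemma to an explicit ratio coming from the size-biased sampling description of $\tilde{v}(u)$. By the size-biased sampling property (the analog, for the exploration of $G(\mathcal{V}\setminus \tilde{\mathcal{V}}_i,p_{f(i+1)})$, of the Lemma stated in Section 1.2), conditionally on $\tilde{\mathcal{V}}_i$ and on $\tilde{v}(1),\dots,\tilde{v}(u-1)$, the vertex $\tilde{v}(u)$ is a size-biased pick from the remaining labels, so
$$
\mathbb{E}\bigl[w_{\tilde{v}(u)}^2 \bigm| \tilde{\mathcal{V}}_i, \tilde{v}(1),\dots,\tilde{v}(u-1)\bigr]
= \frac{\sum_{k \notin \tilde{\mathcal{V}}_i \cup \{\tilde{v}(1),\dots,\tilde{v}(u-1)\}} w_k^3}{\ell_n - \sum_{k \in \tilde{\mathcal{V}}_i} w_k - \sum_{k=1}^{u-1} w_{\tilde{v}(k)}}.
$$
It will then suffice to show that on $K(i,\epsilon')\cap E_l$ the numerator equals $\mathbb{E}[W^3]\,n\,(1+o(1))$ and the denominator equals $\ell_n(1+o(1))$: by Conditions \ref{cond_nodess} the ratio is then $\mathbb{E}[W^3]/\mathbb{E}[W] + o(1) = C + o(1)$.

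The denominator is immediate: on $K(i,\epsilon')$ we have $\sum_{k\in \tilde{\mathcal{V}}_i} w_k \leq \tfrac{2(1+\epsilon')f(i)\ell_n^{2/3}}{C}=o(\ell_n)$ because $f(i)=o(n^{1/3})$, and on $E_l$ we have $\sum_{k=1}^{u-1}w_{\tilde{v}(k)} \leq 2l^{1/2}\ell_n^{1/2}=o(\ell_n)$ since $l=o(n)$. For the numerator I start from $\sum_{k=1}^n w_k^3 = \mathbb{E}[W^3]n + o(1)$ and need $\sum_{k\in \tilde{\mathcal{V}}_i} w_k^3 = o(n)$ and $\sum_{k=1}^{u-1} w_{\tilde{v}(k)}^3 = o(n)$. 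For both the argument is a threshold split at a large constant $M>0$: the heavy contribution is bounded by $\sum_{k: w_k > M} w_k^3$, which can be made arbitrarily small relative to $n$ by uniform integrability of $w^3$ (a consequence of Conditions \ref{cond_nodess}); the light contribution is at most $M^2$ times the appropriate sum of weights, i.e.\ $M^2 t$ for the first sum (using $|\tilde{\mathcal{V}}_i|\leq t = o(n)$ under $K(i,\epsilon')$ and the ordering of the weights) and $2M^2 l^{1/2}\ell_n^{1/2}$ for the second (using $E_l$). Letting $M$ grow slowly with $n$ yields the desired $o(n)$ bounds.

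Combining the two estimates gives $\mathbb{E}[w_{\tilde{v}(u)}^2 \mid \cdot]\mathbbm{1}(K(i,\epsilon')\cap E_l) = C(1+o(1))\mathbbm{1}(K(i,\epsilon')\cap E_l)$, and averaging and dividing by $\mathbb{P}(E_l)\to 1$ (Lemma \ref{sum_capacities}) together with $\mathbb{P}(K(i,\epsilon'))\to 1$ (Theorem \ref{principal_a}) yields the first claim. For the second claim I split $\mathbb{E}[w_{\tilde{v}(u)}^2 \mathbbm{1}(K(i,\epsilon'))]$ over $E_l$ and $E_l^c$; the $E_l^c$ contribution is at most $(\max_i w_i)^2\,\mathbb{P}(E_l^c) \leq o(n^{2/3})\exp(-l^{1/2}n^{1/6}/4) = o(1)$ by Lemma \ref{sum_capacities}, while the $E_l$ contribution converges to $C$ by the first claim. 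The main technical obstacle is controlling $\sum_{k=1}^{u-1}w_{\tilde{v}(k)}^3$: the naive bound $(\max_i w_i)^2 \sum_{k=1}^{u-1} w_{\tilde{v}(k)} = o(n^{2/3})\cdot o(l^{1/2}n^{1/2})$ is far too weak, and the threshold-splitting combined with the $E_l$ weight bound is what makes the argument work.
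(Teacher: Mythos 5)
Your approach is exactly the one the paper uses for its neighbouring lemmas (compare the proofs of Lemmas \ref{sum_capacities} and \ref{esperances}): write the conditional expectation of $w_{\tilde{v}(u)}^2$ given the exploration history as the size-biased ratio $\sum w_k^3 / \sum w_k$ over the remaining vertices, and then control the numerator and denominator on the good event using $K(i,\epsilon')$, the bound on $\sum_{k\le t}w_k^3$, and the bound $\sum_{k<u}w_{\tilde v(k)}\lesssim l^{1/2}\ell_n^{1/2}$. That is the right argument, and the threshold split to get $\sum_{k<u}w_{\tilde v(k)}^3=o(n)$ is indeed the nontrivial point you identify.

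There is, however, a small gap in the averaging step. You establish the pointwise identity
$\mathbb{E}\bigl[w_{\tilde{v}(u)}^2 \bigm| \mathcal{G}_{u-1}\bigr]\mathbbm{1}(K\cap E_l)=C(1+o(1))\mathbbm{1}(K\cap E_l)$
(with $\mathcal{G}_{u-1}=\sigma(\tilde{\mathcal{V}}_i,\tilde v(1),\dots,\tilde v(u-1))$), and then take expectations as if this gave $\mathbb{E}[w_{\tilde{v}(u)}^2\mathbbm{1}(K\cap E_l)]=C(1+o(1))\mathbb{P}(K\cap E_l)$. But $E_l$ depends on $\tilde v(1),\dots,\tilde v(l)$ with $l\ge u$, so $\mathbbm{1}(E_l)$ is \emph{not} $\mathcal{G}_{u-1}$-measurable and the tower property does not apply in this form. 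The standard repair --- which is also what the paper does in the proof of Lemma \ref{esperances} by redefining its good event in terms of $\mathbf V_{l-1}$ --- is to work with the $\mathcal{G}_{u-1}$-measurable proxy $E'_{u-1}=\{\sum_{k=1}^{u-1}w_{\tilde v(k)}<2l^{1/2}\ell_n^{1/2}\}\supset E_l$, for which the tower property is legitimate, and then absorb the difference between $\mathbbm{1}(E'_{u-1})$ and $\mathbbm{1}(E_l)$ into an $o(1)$ error of size at most $(\max_i w_i)^2\,\mathbb{P}(\bar E_l)$, which Lemma \ref{sum_capacities} makes super-polynomially small. A further cosmetic slip: the light part of $\sum_{k\in\tilde{\mathcal V}_i}w_k^3$ truncated at $M$ is $\le M^2\sum_{k\in\tilde{\mathcal V}_i}w_k$ (which is $O(f(i)\ell_n^{2/3})$ on $K$) or $\le M^3 t$, not $M^2 t$; either form gives $o(n)$ for $M$ growing slowly, so the conclusion stands. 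With these two fixes the proof is complete and mirrors the paper's method.
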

\begin{Lemma}
\label{mean_ccapacities}
Let $l=o(n)$. For any integer $l \geq u > 0$ we have:
$$\mathbb{E}(w_{\tilde{v}(u)}\mathbbm{1}(K(i,\epsilon')) | E_{l}) = 1+o(1).$$
\end{Lemma}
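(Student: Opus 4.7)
The plan is to mirror the proof of Lemma \ref{mean_poww_capacities_2}, swapping the squared weights for weights and the third-moment partial-sum bound for a second-moment one. The starting point is the size-biased identity: conditionally on the history $\tilde{v}(1),\ldots,\tilde{v}(u-1)$ and on $\tilde{\mathcal V}_i$,
\begin{equation*}
\mathbb{E}\bigl[w_{\tilde v(u)} \bigm|\, \tilde v(1),\ldots,\tilde v(u-1),\tilde{\mathcal V}_i\bigr] \;=\; \frac{\sum_{k\in A_{u-1}} w_k^2}{\sum_{k\in A_{u-1}} w_k},
\end{equation*}
where $A_{u-1}=\mathcal V\setminus\bigl(\tilde{\mathcal V}_i\cup\{\tilde v(1),\ldots,\tilde v(u-1)\}\bigr)$. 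Using Condition \ref{cond_nodess}(v) and the criticality identity $\mathbb{E}[W^2]=\mathbb{E}[W]$ gives $\sum_k w_k^2 = \ell_n+o(n^{2/3})$, so once both ratio parts are controlled the claimed $1+o(1)$ will fall out.

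I would first handle the denominator uniformly on $K(i,\epsilon')\cap E_l$. The event $K(i,\epsilon')$ forces $\sum_{k\in\tilde{\mathcal V}_i} w_k = O(f(i)\ell_n^{2/3})=o(\ell_n)$, while $E_l$ gives $\sum_{j=1}^{u-1} w_{\tilde v(j)}\le 2l^{1/2}\ell_n^{1/2}=o(\ell_n)$ since $l=o(n)$. Hence the denominator is $\ell_n(1+o(1))$ pointwise on the conditioning event.

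For the numerator the two subtractions must each be $o(\ell_n)$. For $\sum_{k\in\tilde{\mathcal V}_i} w_k^2$, because the weights are in decreasing order and $|\tilde{\mathcal V}_i|\le t:=O(f(i)\ell_n^{2/3})$, Hölder's inequality yields
\begin{equation*}
\sum_{k\in\tilde{\mathcal V}_i} w_k^2 \;\le\; \sum_{k=1}^{t} w_k^2 \;\le\; t^{1/3}\Bigl(\sum_{k=1}^{n}w_k^3\Bigr)^{2/3} \;=\; o(n^{1/3})\cdot O(n^{2/3}) \;=\; o(n),
\end{equation*}
using Condition \ref{cond_nodess}(vi) and $f(i)=o(n^{1/3})$; this is the direct analogue of the $\sum_{k=1}^{t} w_k^3/\ell_n=o(1)$ estimate invoked in the proof of Lemma \ref{sum_capacities}.

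The delicate point, and the only genuine obstacle, is controlling $\sum_{j=1}^{u-1} w_{\tilde v(j)}^2$ on $E_l$. My plan is to combine a deterministic bound $\sum_{j=1}^{u-1} w_{\tilde v(j)}^2\le(\max_k w_k)\sum_{j=1}^{u-1} w_{\tilde v(j)}$, which under $E_l$ gives a (crude but useful) bound, with an averaging argument: one splits the expectation in Lemma \ref{mean_ccapacities} according to whether or not a certain auxiliary good event occurs on which the squared partial sums are $o(\ell_n)$, and shows via Bernstein's inequality applied to i.i.d. size-biased copies $\tilde J(i)$ (as in the proof of Lemma \ref{sum_capacities}) that the bad event has exponentially small probability, hence negligible contribution to the conditional expectation. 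Dividing the controlled numerator by the controlled denominator then gives $1+o(1)$ on $K(i,\epsilon')\cap E_l$, and multiplying by $\mathbbm 1(K(i,\epsilon'))$ and integrating against the conditional law given $E_l$ yields the lemma. Keeping the numerator estimate \emph{uniform} in $u\in\{1,\ldots,l\}$ rather than just in conditional expectation is what makes this step nontrivial; the rest is bookkeeping already present in \cite{O20}.
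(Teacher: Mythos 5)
Your overall strategy mirrors the one the paper uses for the companion statements (Lemma \ref{sum_capacities}, Lemma \ref{esperances}): start from the size-biased identity, control numerator and denominator, and use Bernstein/Chernoff to show the partial sums behave. The denominator control on $K(i,\epsilon')\cap E_l$ and the H\"older bound $\sum_{k\le t}w_k^2\le t^{1/3}(\sum_k w_k^3)^{2/3}=o(\ell_n)$ are both fine, and the idea of introducing an auxiliary good event to handle $\sum_{j<u}w_{\tilde v(j)}^2$ via Bernstein on i.i.d.\ size-biased copies is workable.

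There is, however, a measurability gap in your last step. The identity
\begin{equation*}
\mathbb{E}\bigl[w_{\tilde v(u)} \bigm| \tilde v(1),\ldots,\tilde v(u-1),\tilde{\mathcal V}_i\bigr]=\frac{\sum_{k\in A_{u-1}}w_k^2}{\sum_{k\in A_{u-1}}w_k}
\end{equation*}
holds conditionally on $\mathcal F_{u-1}:=\sigma(\tilde v(1),\ldots,\tilde v(u-1),\tilde{\mathcal V}_i)$, but the conditioning event $E_l=\{\sum_{k=1}^{l}w_{\tilde v(k)}<2l^{1/2}\ell_n^{1/2}\}$ is \emph{not} $\mathcal F_{u-1}$-measurable when $u<l$: it constrains $\tilde v(u),\ldots,\tilde v(l)$ as well. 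Consequently, the tower rule does not let you write $\mathbb{E}[w_{\tilde v(u)}\mathbbm{1}(K)\mathbbm{1}(E_l)]$ as $\mathbb{E}[\mathbbm{1}(K)\mathbbm{1}(E_l)\cdot(\text{ratio})]$, and "multiplying by $\mathbbm 1(K)$ and integrating against the conditional law given $E_l$" computes the wrong quantity. This is genuinely different from the situation in Lemma \ref{esperances}, where the index of interest is exactly $l$ and the paper's $E_l$ there is built from $\textbf{V}_{l-1}\cup\tilde{\mathcal V}_i$, hence past-measurable.

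The fix is short but must be made explicit: since $w_{\tilde v(u)}\le\max_k w_k=o(n^{1/3})$ and $\mathbb{P}(\bar E_l)\le\exp(-l^{1/2}n^{1/6}/4)$ by Lemma \ref{sum_capacities}, one has $\mathbb{E}[w_{\tilde v(u)}\mathbbm{1}(K)\mid E_l]=\mathbb{E}[w_{\tilde v(u)}\mathbbm{1}(K)]+o(1)$. Then apply the tower rule to the \emph{unconditional} expectation and introduce your $E_{u-1}$-style and squared-partial-sum good events as pointwise indicators inside the expectation (rather than as the conditioning), discarding their complements via boundedness and the exponential tail. With that adjustment the remaining bookkeeping you outline closes the argument, in line with how the paper handles the analogous estimates.
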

\begin{Lemma}
\label{mean_corrr}
Let $l=o(n)$. For any integer $l \geq u > 0$ we have:
$$\mathbb{E}(w_{\tilde{v}(l)}w_{\tilde{v}(u)}) = 1+o(1).$$
\end{Lemma}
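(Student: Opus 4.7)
The plan is to use the tower property, conditioning on the history $\mathcal{H}_{l-1} := \{\tilde{v}(1),\ldots,\tilde{v}(l-1)\}$, and reduce the estimate to the one-point calculation performed in Lemma \ref{mean_ccapacities}. Writing
$$\mathbb{E}[w_{\tilde{v}(l)}w_{\tilde{v}(u)}] = \mathbb{E}\bigl[w_{\tilde{v}(u)}\,\mathbb{E}[w_{\tilde{v}(l)} \mid \mathcal{H}_{l-1}]\bigr],$$
the size-biased transition formula recalled in Section 1.2 gives
$$\mathbb{E}[w_{\tilde{v}(l)} \mid \mathcal{H}_{l-1}] = \frac{\sum_{j=1}^n w_j^2 - \sum_{j\in \tilde{\mathcal{V}}_i}w_j^2 - \sum_{k=1}^{l-1}w_{\tilde{v}(k)}^2}{\ell_n - W(\tilde{\mathcal{V}}_i) - \sum_{k=1}^{l-1}w_{\tilde{v}(k)}}.$$
The heart of the proof is to show that this ratio is $1+o(1)$ on a high-probability event, after which Lemma \ref{mean_ccapacities} applied to the outer expectation yields the claim.

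I would work on the good event $K(i,\epsilon')\cap E_{l-1}\cap G_{l-1}$, where $G_{l-1} := \{\sum_{k=1}^{l-1}w_{\tilde{v}(k)}^2 \leq \ell_n/\log n\}$ is an auxiliary event controlling the random correction. On $K(i,\epsilon')\cap E_{l-1}$ both $W(\tilde{\mathcal{V}}_i) = O(f(i)\ell_n^{2/3})$ and $\sum_{k=1}^{l-1}w_{\tilde{v}(k)}\leq 2l^{1/2}\ell_n^{1/2}$ are $o(\ell_n)$, so the denominator equals $\ell_n(1+o(1))$. For the numerator, Conditions \ref{cond_nodess}(iii)--(v) give $\sum_j w_j^2 = \ell_n + o(n^{2/3})$; the deterministic correction satisfies $\sum_{j\in\tilde{\mathcal{V}}_i}w_j^2 \leq \sum_{k=1}^t w_k^2$ for $t=O(f(i)\ell_n^{2/3})$, and H\"older's inequality combined with Condition \ref{cond_nodess}(vi) bounds this by $\bigl(\sum_k w_k^3\bigr)^{2/3} t^{1/3} = O\bigl(n^{2/3}\cdot(f(i)n^{2/3})^{1/3}\bigr) = o(n)$ whenever $f(i)=o(n^{1/3})$; the random correction $\sum_{k=1}^{l-1}w_{\tilde{v}(k)}^2$ is $o(\ell_n)$ on $G_{l-1}$ by construction. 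Hence the conditional expectation equals $1+o(1)$ uniformly on the good event, and an application of Lemma \ref{mean_poww_capacities_2} together with Markov's inequality shows $\mathbb{P}(G_{l-1}^c)=o(1)$.

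Substituting into the tower identity and multiplying by $\mathbbm{1}(\text{good})$, I get $\mathbb{E}[w_{\tilde{v}(l)}w_{\tilde{v}(u)}\mathbbm{1}(\text{good})] = (1+o(1))\,\mathbb{E}[w_{\tilde{v}(u)}\mathbbm{1}(\text{good})] = 1+o(1)$ via Lemma \ref{mean_ccapacities}. The bad-event contribution is handled using the pointwise bound $w_{\tilde{v}(l)}w_{\tilde{v}(u)}\leq (\max_i w_i)^2 = o(n^{2/3})$ together with Cauchy-Schwarz and the exponential probability bounds from \eqref{eq8} and Lemma \ref{sum_capacities}, which make this contribution vanish. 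The main obstacle I expect is the deterministic estimate $\sum_{j\in\tilde{\mathcal{V}}_i}w_j^2 = o(\ell_n)$: the naive max-weight bound gives only $O(f(i)n^{4/3})$, which is too weak, and it is the H\"older step converting Condition \ref{cond_nodess}(vi) into an $L^2$ bound on the top $t$ weights that makes the estimate sharp enough. A secondary subtlety is that the $1+o(1)$ estimate must hold uniformly (in $L^\infty$) on the good event so that it factors cleanly past $w_{\tilde{v}(u)}$ under the expectation.
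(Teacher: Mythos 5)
Your overall strategy --- tower property conditioned on $\mathcal{H}_{l-1}$ (and $\tilde{\mathcal{V}}_i$), the size-biased transition formula, and good/bad event decomposition --- is structurally the same as the paper's treatment of the sibling lemmas (the paper itself defers to [O20] for this one, but the worked proofs of Lemmas \ref{sum_capacities} and \ref{esperances} follow exactly this template). Your H\"older observation that $\sum_{j\in\tilde{\mathcal{V}}_i}w_j^2 \leq t^{1/3}\bigl(\sum_k w_k^3\bigr)^{2/3}=O(f(i)^{1/3}n^{8/9})=o(n)$ is correct and, while not as sharp as the size-biased estimate $O(f(i)n^{2/3})$ implicit in the paper, it is sufficient here.

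However, there is a genuine gap in your control of the auxiliary event $G_{l-1}=\{\sum_{k<l}w_{\tilde{v}(k)}^2\le \ell_n/\log n\}$. Markov's inequality (via Lemma \ref{mean_poww_capacities_2} and multiplying by $\mathbbm{1}(K)$) gives $\mathbb{P}(G_{l-1}^c\cap K)\lesssim l\log n/\ell_n$, which is \emph{not} $o(1)$ for all $l=o(n)$ --- take $l=n/\log\log n$, say. More seriously, even where $\mathbb{P}(G_{l-1}^c)$ is small it is only polynomially small, whereas you only invoke ``the exponential probability bounds from \eqref{eq8} and Lemma \ref{sum_capacities}'', which apply to $K^c$ and $E_{l-1}^c$ but not to $G_{l-1}^c$. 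The pointwise bound $w_{\tilde{v}(l)}w_{\tilde{v}(u)}\leq(\max_iw_i)^2=o(n^{2/3})$ is then of no help: $o(n^{2/3})\cdot\mathbb{P}(G_{l-1}^c)$ does not vanish. The clean fix is to note that $\phi_{l-1}:=\mathbb{E}[w_{\tilde{v}(l)}\mid\mathcal{H}_{l-1}]\leq \bigl(\sum_jw_j^2\bigr)/\bigl(\ell_n-W(\tilde{\mathcal{V}}_i)-\sum_k w_{\tilde{v}(k)}\bigr)\leq 1+o(1)$ on $K\cap E_{l-1}$ \emph{without} any appeal to $G_{l-1}$, so the auxiliary event is only needed for the lower bound, where $\phi_{l-1}\geq 0$ handles the bad set for free; one should also choose a threshold like $\sqrt{l/n}\,\ell_n$ so that $\mathbb{P}(G_{l-1}^c\cap K)=O(\sqrt{l/n})=o(1)$. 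Finally, note that for $u=l$ --- allowed by the statement as printed --- the claim reads $\mathbb{E}[w_{\tilde{v}(l)}^2]=1+o(1)$, which contradicts Lemma \ref{mean_poww_capacities_2} unless $C=1$; your tower argument, which factors $w_{\tilde{v}(u)}$ out of the conditional expectation, silently assumes $u<l$, so you should either flag the boundary case as a typo in the lemma or exclude it explicitly.
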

\begin{Lemma}
\label{decreasingg}
Let $u' \geq u \geq 0$, then for any $x \geq 0$:
$$
\mathbb{P}(w_{\tilde{v}(u')}\mathbbm{1}(K(i,\epsilon')) \geq x) \leq \mathbb{P}(w_{\tilde{v}(u)}\mathbbm{1}(K(i,\epsilon')) \geq x)
$$
\end{Lemma}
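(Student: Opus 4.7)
The plan is to reduce the lemma to a property of size-biased random permutations. Conditionally on $\tilde{\mathcal{V}}_i$, the graph $G(\mathcal{V}\setminus\tilde{\mathcal{V}}_i, p_{f(i+1)})$ is itself a rank-1 inhomogeneous random graph on the vertex set $\mathcal{V}\setminus\tilde{\mathcal{V}}_i$, so the analogue of the size-biased sampling lemma from the introduction applies and the sequence $(\tilde{v}(u))_{u\geq 1}$ is a size-biased random permutation of that set. Since $\mathbbm{1}(K(i,\epsilon'))$ is measurable with respect to $\tilde{\mathcal{V}}_i$, it can be pulled out of the inner conditional expectation, so that proving the lemma reduces to the following purely combinatorial property: for any size-biased random permutation $\xi_1,\xi_2,\ldots$ of a finite multiset of positive weights, the random variable $w_{\xi_u}$ is stochastically non-increasing in $u$.

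Stochastic dominance is transitive, so it suffices to treat the one-step case $u' = u+1$, and I will do this by conditioning further on $\tilde{v}(1),\ldots,\tilde{v}(u-1)$. Write $S$ for the set of not-yet-sampled vertices and $R$ for their total weight. For any distinct $a,b\in S$ the size-biased dynamics yield
\begin{equation*}
\mathbb{P}(\tilde{v}(u)=a,\tilde{v}(u+1)=b\mid\mathrm{hist}) = \frac{w_a w_b}{R(R-w_a)}, \qquad \mathbb{P}(\tilde{v}(u)=b,\tilde{v}(u+1)=a\mid\mathrm{hist}) = \frac{w_a w_b}{R(R-w_b)}.
\end{equation*}
Fixing a threshold $x\geq 0$ and grouping the two orderings of an unordered pair $\{a,b\}$ together, the contribution of this pair to $\mathbb{P}(w_{\tilde{v}(u)}\geq x\mid\mathrm{hist}) - \mathbb{P}(w_{\tilde{v}(u+1)}\geq x\mid\mathrm{hist})$ equals
\begin{equation*}
\frac{w_a w_b}{R}\bigl(\mathbbm{1}(w_a\geq x) - \mathbbm{1}(w_b\geq x)\bigr)\left(\frac{1}{R-w_a} - \frac{1}{R-w_b}\right).
\end{equation*}
The two bracketed factors always carry the same sign (both non-negative when $w_a\geq w_b$, both non-positive otherwise), so each pair contributes non-negatively; summing over all unordered pairs in $S$ gives a non-negative conditional difference.

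Integrating out the history yields $\mathbb{P}(w_{\tilde{v}(u)}\geq x\mid\tilde{\mathcal{V}}_i) \geq \mathbb{P}(w_{\tilde{v}(u+1)}\geq x\mid\tilde{\mathcal{V}}_i)$ almost surely, and a final integration against $\mathbbm{1}(K(i,\epsilon'))$ delivers the lemma. I do not foresee a serious obstacle: the key input is the size-biased structure of the exploration order, which has already been invoked repeatedly in the preceding lemmas of this section; the only mildly subtle point is checking that conditioning on $\tilde{\mathcal{V}}_i$ preserves that structure, and this is exactly the observation used in the proofs of Lemmas \ref{sum_capacities}--\ref{mean_corrr}.
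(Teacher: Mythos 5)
Your argument is correct. Note that the paper does not actually spell out a proof here: it simply asserts that Lemma~\ref{decreasingg}, along with Lemmas~\ref{mean_poww_capacities_2}--\ref{mean_corrr}, is ``a slight modification of Lemmas 8, 9, 10 and 33 from \cite{O20}'', so the monotonicity is inherited from Lemma~33 of \cite{O20} together with the standing observation (used throughout this section) that, conditionally on $\tilde{\mathcal{V}}_i$, the sequence $(\tilde{v}(u))_u$ is a size-biased permutation of $\mathcal{V}\setminus\tilde{\mathcal{V}}_i$. Your pairwise-exchange computation is the natural self-contained way to prove the underlying claim that a size-biased random permutation is stochastically non-increasing: reduce to $u'=u+1$ by transitivity, condition on the first $u-1$ draws and on $\tilde{\mathcal{V}}_i$, and for each unordered pair $\{a,b\}$ of remaining vertices check that the two orderings together contribute
$$
\frac{w_a w_b}{R}\bigl(\mathbbm{1}(w_a\geq x)-\mathbbm{1}(w_b\geq x)\bigr)\left(\frac{1}{R-w_a}-\frac{1}{R-w_b}\right)\geq 0
$$
to the difference of tail probabilities, since the two bracketed factors always share a sign. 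The only thing you lean on without proof is that conditioning on $\tilde{\mathcal{V}}_i$ preserves the size-biased structure on $\mathcal{V}\setminus\tilde{\mathcal{V}}_i$; you flag this and defer to the surrounding lemmas, which is exactly what the paper does as well, so this is fine. Two cosmetic points you might tidy up if you wrote this out fully: the lemma is stated for $u\geq 0$ but $\tilde{v}(0)$ is not defined, so the argument should really start at $u\geq 1$ (this is a slip in the paper's statement), and when $u$ exceeds $|\mathcal{V}\setminus\tilde{\mathcal{V}}_i|$ one should adopt the convention $w_{\tilde{v}(u)}=0$ so the inequality remains meaningful.
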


\begin{Lemma}
\label{esperances}
For any $l = o(n)$ we have :
\begin{equation*}
\begin{aligned}
\mathbb{E}[w_{\tilde{v}(l)}\mathbbm{1}(K(i,\epsilon'))] \leq 1+\frac{l+2(1-\epsilon')C^{-1}f(i)\ell_n^{2/3}}{\ell_n}\left(1-C\right) + o\left(\frac{l+f(i)n^{2/3}}{n}\right).
\end{aligned}
\end{equation*}
\end{Lemma}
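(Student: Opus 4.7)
The plan is to use the size-biased identity for the BFW exploration of $G(\mathcal{V}\setminus\tilde{\mathcal{V}}_i,p_{f(i+1)})$: writing $S = \tilde{\mathcal{V}}_i \cup \{\tilde v(1),\ldots,\tilde v(l-1)\}$ and $W_S = \sum_{k\in S} w_k$, conditioning on $\tilde{\mathcal{V}}_i$ together with $\tilde v(1),\ldots,\tilde v(l-1)$ gives
$$
\mathbb{E}[w_{\tilde v(l)} \mid S] \;=\; \frac{\sum_{k\notin S} w_k^2}{\ell_n - W_S}.
$$
By Conditions \ref{cond_nodess} $(iii)$ and $(v)$, $\sum_k w_k^2 = \ell_n + o(n^{2/3})$, so an algebraic manipulation yields
$$
\frac{\sum_{k\notin S} w_k^2}{\ell_n - W_S} \;=\; 1 \;-\; \frac{\sum_{k\in S}(w_k^2 - w_k)}{\ell_n - W_S} \;+\; \frac{o(n^{2/3})}{\ell_n - W_S}.
$$
On $K(i,\epsilon')$ one has $\ell_n - W_S = \ell_n(1 - o(1))$, using $W_{\tilde{\mathcal{V}}_i}=O(f(i)\ell_n^{2/3})$ and Lemma \ref{sum_capacities} to control $\sum_{k\leq l-1} w_{\tilde v(k)}$, so the residual error is $o(n^{-1/3}) = o((l+f(i)n^{2/3})/n)$ and falls inside the target. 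It therefore suffices to lower bound
$$
\mathbb{E}\Bigl[\mathbbm{1}(K(i,\epsilon'))\;\tfrac{1}{\ell_n}\textstyle\sum_{k\in S}(w_k^2 - w_k)\Bigr] \;\geq\; \tfrac{C-1}{\ell_n}\bigl(l + 2(1-\epsilon')C^{-1}f(i)\ell_n^{2/3}\bigr) - o\!\bigl(\tfrac{l+f(i)n^{2/3}}{n}\bigr).
$$

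I would split $\sum_{k\in S}(w_k^2-w_k) = \sum_{k\in \tilde{\mathcal{V}}_i}(w_k^2-w_k) + \sum_{k=1}^{l-1}(w_{\tilde v(k)}^2 - w_{\tilde v(k)})$. For the second sum, Lemmas \ref{mean_poww_capacities_2} and \ref{mean_ccapacities} give $\mathbb{E}[(w_{\tilde v(k)}^2 - w_{\tilde v(k)})\mathbbm{1}(K(i,\epsilon'))\mid E_{l-1}] = C - 1 + o(1)$, and Lemma \ref{sum_capacities} ensures that replacing $E_{l-1}$ by the whole sample space contributes only an exponentially small error; summing over $k\leq l-1$ contributes $(l-1)(C-1)/\ell_n + o(l/n)$, accounting for the $l$-term of the bound.

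The main obstacle is the giant contribution $\sum_{k\in\tilde{\mathcal{V}}_i}(w_k^2-w_k)$, since $K(i,\epsilon')$ controls only $|\tilde{\mathcal{V}}_i|$ and $W_{\tilde{\mathcal{V}}_i}$, not $\sum_{k\in\tilde{\mathcal{V}}_i} w_k^2$. To handle it, I would use that $\tilde{\mathcal{V}}_i$ is discovered by the BFW of $G(\textbf{W},p_{f(i)})$ as a contiguous block of size-biased labels, so the same proof technique as Lemmas \ref{mean_poww_capacities_2} and \ref{mean_ccapacities}, applied to that inner BFW, yields that each node of $\tilde{\mathcal{V}}_i$ contributes $C - 1 + o(1)$ in expectation to $\sum(w_k^2 - w_k)$. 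Combining this with the deterministic lower bound $|\tilde{\mathcal{V}}_i|\geq 2(1-\epsilon'/2)f(i)\ell_n^{2/3}/C - \ell_n^{2/3}/(f(i)C)$ valid on $K(i,\epsilon')$ gives $\mathbb{E}[\sum_{k\in\tilde{\mathcal{V}}_i}(w_k^2-w_k)\mathbbm{1}(K(i,\epsilon'))] \geq (C-1)\cdot 2(1-\epsilon')f(i)\ell_n^{2/3}/C + o(f(i)n^{2/3})$, where the loosening from $\epsilon'/2$ to $\epsilon'$ absorbs the subleading $\ell_n^{2/3}/(f(i)C)$ term and the Bernstein-type fluctuations. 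Finally, Bernstein's inequality (with the truncation $\max_i w_i = o(n^{1/3})$ from Condition \ref{cond_nodess} $(vii)$) is used throughout to convert the expectation identities into high-probability deterministic bounds on the random quotient, confining all error terms inside $o\bigl(\tfrac{l+f(i)n^{2/3}}{n}\bigr)$.
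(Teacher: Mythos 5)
Your proposal follows essentially the same route as the paper's proof: the size-biased identity for $\mathbb{E}[w_{\tilde v(l)}\mid S]$, the algebraic rewriting using $\sum_k w_k^2 = \ell_n + o(n^{2/3})$, control of the denominator via $K(i,\epsilon')$ and Lemma~\ref{sum_capacities}, and the key split of $\sum_{k\in S}(w_k^2-w_k)$ into the $\textbf{V}_{l-1}$ part (handled by Lemmas~\ref{mean_poww_capacities_2} and~\ref{mean_ccapacities}) and the $\tilde{\mathcal{V}}_i$ part, which you correctly reduce to a contiguous block of size-biased labels from the original BFW and bound via the $K(i,\epsilon')$ size estimate. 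This matches the paper's argument, including the absorption of the $\ell_n^{2/3}/(f(i)C)$ term by loosening $\epsilon'/2$ to $\epsilon'$, so the proof is correct.
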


\begin{proof}
First,. Denote $\{\tilde{v}(1),\tilde{v}(2),...,\tilde{v}(l)\}$ by 
$\textbf{V}_{l-1}$. Then by definition:
\begin{equation*}
\begin{aligned}
\mathbb{E}[w_{\tilde{v}(l)}\mathbbm{1}(K(i,\epsilon'))] &= \mathbb{E}\left[\mathbb{E}[w_{\tilde{v}(l)}\mathbbm{1}(K(i,\epsilon')) | (\textbf{V}_{l-1}, \tilde{\mathcal{V}}_i)]\right] \\
&= \mathbb{E}\left[\mathbbm{1}(K(i,\epsilon'))\sum_{j \not\in \textbf{V}_{l-1} \cup \tilde{\mathcal{V}}_i} \frac{w_j^2}{\ell_n- \sum \limits_{j \in \textbf{V}_{l-1} \cup \tilde{\mathcal{V}}_i}w_{j}}\right].
\end{aligned}
\end{equation*}
Let 
$$E_{l}= \left\{\mathbbm{1}(K(i,\epsilon'))\sum_{j \in \textbf{V}_{l-1} \cup \tilde{\mathcal{V}}_i}^{l}w_{j} < 2l^{1/2}\ell_n^{1/2}\right\},$$
then:
\begin{equation}
\label{summ_eps}
    \mathbb{E}[w_{\tilde{v}(l)}\mathbbm{1}(K(i,\epsilon'))] =  \mathbb{E}\left[w_{\tilde{v}(l)}\mathbbm{1}(K(i,\epsilon'))|E_{l}\right] \mathbb{P}\left(E_{l}\right)+\mathbb{E}\left[w_{\tilde{v}(l)}\mathbbm{1}(K(i,\epsilon'))|\bar{E}_{l}\right] \mathbb{P}\left(\bar{E}_{l}\right).
\end{equation}
By Lemma \ref{sum_capacities}, and the fact that $l = o(n)$:
\begin{equation*}
\begin{aligned}
\label{nimp}
\mathbb{P}(\bar{E}_{l}) &\leq \exp\left(\frac{-l^{1/2}n^{1/6}}{4}\right) \\
&\leq \exp\left(\frac{-n^{1/6}}{4}\right).
\end{aligned}
\end{equation*}
Hence, by Conditions \ref{cond_nodess} it follows that:
$$
\mathbb{E}\left[ w_{\tilde{v}(l)}\mathbbm{1}(K(i,\epsilon')) |\bar{E}_{l}\right] \mathbb{P}\left(\bar{E}_{l}\right) \leq n^{1/3}\exp\left(\frac{-n^{1/6}}{4}\right)=o\left(\frac{1}{n}\right),
$$
together with Equation \eqref{summ_eps} this yields:
\begin{equation}
\begin{aligned}
\label{summ_eps_2}
\mathbb{E}[w_{\tilde{v}(l)}\mathbbm{1}(K(i,\epsilon'))] &=  \mathbb{E}\left[w_{v(l)} |E_{l}\right]\left(1+o\left(\frac{1}{n}\right)\right)+o\left(\frac{1}{n}\right).
\end{aligned}
\end{equation}
Moreover, by definition of the event $E_l$:
\begin{equation*}
\begin{aligned}
\mathbb{E}\left[w_{\tilde{v}(l)}\mathbbm{1}(K(i,\epsilon')) | E_{l}\right] &= \mathbb{E}\left[\mathbbm{1}(K(i,\epsilon'))\left.\sum_{j \not\in \textbf{V}_{l-1} \cup \tilde{\mathcal{V}}_i} \frac{w_j^2}{\ell_n\left(1-\frac{\sum_{j \in \textbf{V}_{l-1} \cup \tilde{\mathcal{V}}_i}w_{j}}{\ell_n}\right)} \right|E_{l} \right]  \\
&=  \mathbb{E}\left[\mathbbm{1}(K(i,\epsilon'))\left.\sum_{j \not\in \textbf{V}_{l-1} \cup \tilde{\mathcal{V}}_i} \frac{w_j^2}{\ell_n}\left(1+\frac{\sum_{j \in \textbf{V}_{l-1} \cup \tilde{\mathcal{V}}_i}w_{j}}{\ell_n}\right) \right| E_{l}\right] + o\left(\frac{l}{n}\right).
\end{aligned}
\end{equation*}
Let: 
$$
A = \mathbb{E}\left[\frac{\mathbbm{1}(K(i,\epsilon'))(\sum_{j \in \textbf{V}_{l-1} \cup \tilde{\mathcal{V}}_i}w_j)(\sum_{i = 1}^nw_j^2)}{\ell_n^2} \middle| E_{l} \right],
$$
and
$$
B = \mathbb{E}\left[\frac{\mathbbm{1}(K(i,\epsilon'))\sum_{j \in \textbf{V}_{l-1} \cup \tilde{\mathcal{V}}_i}w_j^2}{\ell_n} \middle| E_{l}\right].
$$
recall the definition of $t_1$ and $t_2$ from the chapter before. By Lemmas \ref{mean_poww_capacities_2} and  \ref{mean_ccapacities}, and by definition of $E_l$ and $\mathbbm{1}(K(i,\epsilon'))$ and Conditions \ref{cond_nodess}:
\begin{equation*}
\begin{aligned}
    A-B &= \mathbb{E}\left[\frac{\mathbbm{1}(K(i,\epsilon'))(\sum_{j \in \textbf{V}_{l-1} \cup \tilde{\mathcal{V}}_i}(w_j-w_j^2))}{\ell_n} \middle| E_{l} \right] + o\left(\frac{n^{2/3}}{n}\right) \\
    &\leq \mathbb{E}\left[\frac{\sum_{j \in \textbf{V}_{l-1} }(w_j-w_j^2)+\sum\limits_{j = t_1}^{t_2}\left(w_{v(j)}-w_{v(j)}^2\right)}{\ell_n} \middle| E_{l} \right] + o\left(\frac{n^{2/3}}{n}\right) \\
    &\leq \frac{l+(t_2-t_1)}{\ell_n}\left(1-C\right) + o\left(\frac{n^{2/3}}{n}\right) \\
    &\leq \frac{l+2(1-\epsilon')f(i)\ell_n^{2/3}}{\ell_n}\left(1-C\right) + o\left(\frac{n^{2/3}}{n}\right).
\end{aligned}
\end{equation*}
It follows that:
\begin{equation*}
\begin{aligned}
\mathbb{E}\left[w_{\tilde{v}(l)}\mathbbm{1}(K(i,\epsilon')) | E_{l}\right] &= \mathbb{E}\left[\mathbbm{1}(K(i,\epsilon'))\left.\sum_{j \not\in \textbf{V}_{l-1} \cup \tilde{\mathcal{V}}_i} \frac{w_j^2}{\ell_n}\left(1+\frac{\sum_{j \in \textbf{V}_{l-1} \cup \tilde{\mathcal{V}}_i}w_{j}}{\ell_n}\right) \right| E_{l}\right] + o\left(\frac{l}{n}\right) \\
&= \frac{\sum_{i = 1}^nw_j^2}{\ell_n} + A - B \\
&- \mathbb{E}\left[\mathbbm{1}(K(i,\epsilon'))\frac{\left(\sum_{j \in \textbf{V}_{l-1} \cup \tilde{\mathcal{V}}_i}w_j^2\right)\left(\sum_{j \in \textbf{V}_{l-1} \cup \tilde{\mathcal{V}}_i}w_j\right)}{\ell_n^2} \middle| E_{l}\right]
+  o\left(\frac{l}{n}\right) \\
&= \frac{\sum_{i = 1}^nw_j^2}{\ell_n} + A - B + o\left(\mathbb{E}\left[\mathbbm{1}(K(i,\epsilon'))\frac{\sum_{j \in \textbf{V}_{l-1} \cup \tilde{\mathcal{V}}_i}w_j^2}{\ell_n} \middle| E_{l}\right]\right)\\ &+o\left(\frac{l+f(i)n^{2/3}}{n}\right) \\
&\leq 1 +\frac{l+2(1-\epsilon')f(i)\ell_n^{2/3}}{\ell_n}\left(1-C\right)+o\left(\frac{l+f(i)n^{2/3}}{n}\right).
\end{aligned}
\end{equation*}
Replacing in Equation \eqref{summ_eps_2} finishes the proof.
\end{proof}
We draw the attention of the reader to the fact that this proof is very similar to the one already done in Section $2$ of  \cite{O20}. Since, conditionally on  $ \tilde{\mathcal{V}}_i$ , the graph $G(\mathcal{V} \setminus \tilde{\mathcal{V}}_i,p_{f(i+1)})$ is an inhomogeneous random graph. This calculation shows that, when $K(i,\epsilon')$ holds, the exploration process of $G(\mathcal{V} \setminus \tilde{\mathcal{V}}_i,p_{f(i+1)})$ has slightly smaller than $1$ increments on expectation.  Thus,  when $K(i,\epsilon')$ holds, $G(\mathcal{V} \setminus \tilde{\mathcal{V}}_i,p_{f(i+1)})$ is slightly subcritical. In  \cite{O20}, we study extensively the graph $G(\mathcal{V} \setminus \tilde{\mathcal{V}}_i,p_{f(i)})$. When the event  $K(i,\epsilon')$ holds those two graphs are both slightly subcritical and have very similar increments distribution. Hence, the upper bound results that where shown for $G(\mathcal{V} \setminus \tilde{\mathcal{V}}_{i},p_{f(i)})$ can also be shown for $G(\mathcal{V} \setminus \tilde{\mathcal{V}}_{i},p_{f(i+1)})$ with slightly larger constants  by following the same method of proofs from  \cite{O20} under the event $K(i,\epsilon')$ and then adding the probability of $K(i,\epsilon')$ not holding. By this argument the following two theorems are true similarly to their counterparts for the graph $G(\mathcal{V} \setminus \tilde{\mathcal{V}}_{i},p_{f(i)})$. Theorem \ref{Theo11} is related to Theorem $3$ from  \cite{O20}, and Theorem \ref{Theo10} is related to Theorem $4$ from  \cite{O20}. We thus omit their proofs.
\begin{Theorem}
\label{Theo11}
Let $1 \geq \epsilon > 0$ and $\epsilon'>0$ and consider the following event: \\
All the connected components of $G(\mathcal{V} \setminus \tilde{\mathcal{V}}_i, p_{f(i+1)})$ have size smaller than
$$
\frac{\ell_n^{2/3}}{Cf(i)^{1-\epsilon}},
$$
and weight smaller than
$$
\frac{(1+\epsilon')\ell_n^{2/3}}{Cf(i)^{1-\epsilon}}.
$$
There exists a positive constant $A >0$ that only depend on the distribution of $W$ such that the probability of this event not happening is at most:
$$
A\left(\exp\left(\frac{-f(i+1)^{\epsilon}}{A}\right)+\exp\left(\frac{-\sqrt{f(i+1)}}{A}\right)+\exp\left(\frac{-n^{1/12}}{A}\right)\right).
$$
\end{Theorem}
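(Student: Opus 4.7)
Here is my plan. I would condition on $\tilde{\mathcal{V}}_i$ and work throughout on the event $K(i,\epsilon')$; its failure contributes at most $A\exp(-f(i)/A)$ by Equation \eqref{eq8}, and since $f(i+1) = \tfrac{3}{2}f(i)$ this is dominated by $\exp(-\sqrt{f(i+1)}/A)$ once $A$ is large enough. On $K(i,\epsilon')$, the graph $G(\mathcal{V}\setminus\tilde{\mathcal{V}}_i,p_{f(i+1)})$ is an inhomogeneous random graph on the residual vertex set, and I would run its BFW and study the non-reflected exploration process $L'$ defined in the previous section, whose positive excursions above the running minimum are in bijection with the connected components.

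The essential input is a drift computation. Using Lemma \ref{esperances} together with the Poisson approximation for $c(l+1)$ and the fact that the removed weight $\sum_{j\in\tilde{\mathcal{V}}_i} w_j = \Theta(f(i)\ell_n^{2/3}/C)$ outweighs the supercritical boost provided by $p_{f(i+1)}$, one obtains, for steps $l = o(\ell_n^{2/3}f(i)^{-1+\epsilon})$,
\begin{equation*}
\mathbb{E}[c(l+1)-1 \mid \mathcal{F}_l] \,\leq\, -\delta\,\frac{f(i)}{\ell_n^{1/3}} + o\!\left(\frac{f(i)}{\ell_n^{1/3}}\right),
\end{equation*}
for some $\delta > 0$ depending on $C-1 > 0$ and on the ratio $f(i+1)/f(i) = 3/2$. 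This inequality is the quantitative embodiment of the subcriticality of the residual graph: removing the largest component at level $f(i)$ takes out more weight than the increment of $p$ from $p_{f(i)}$ to $p_{f(i+1)}$ can recover.

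The heart of the argument is then Bernstein's inequality applied to $L'$. Set $T = \ell_n^{2/3}/(Cf(i)^{1-\epsilon})$. An excursion of length at least $T$ starting at step $l_0$ forces $L'(l_0+T) - L'(l_0) \geq 0$, hence forces the centred increments to overshoot their accumulated drift by at least $\Omega(f(i)^{\epsilon} \ell_n^{1/3})$. With per-step variance $O(1)$ (by Lemma \ref{mean_poww_capacities_2}) and uniform bound $o(n^{1/3})$ on $|c(l+1)-1|$ (by Condition (vii)), Bernstein's inequality gives failure probability of order $\exp(-f(i+1)^\epsilon/A)$ for each fixed $l_0$, and the union bound over the $O(\ell_n/T)$ possible starting points only adds a polynomial factor that is absorbed into $A$. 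The weight bound with the prefactor $1+\epsilon'$ follows by the analogous Bernstein argument applied to the weighted sum $\sum_{k=1}^T w_{\tilde{v}(l_0+k)}$, using Lemmas \ref{mean_ccapacities} and \ref{sum_capacities} for the mean and variance control; the event $E_l$ of Lemma \ref{sum_capacities} is the source of the $\exp(-n^{1/12}/A)$ term.

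The main obstacle is checking that conditioning on $\tilde{\mathcal{V}}_i$ and on $K(i,\epsilon')$ preserves the size-biased moment estimates that drive the concentration proof of Theorem 3 in \cite{O20}. The Lemmas \ref{sum_capacities}--\ref{esperances} developed earlier in this section have been calibrated for exactly this purpose, so once they are in hand, transferring the bounds reduces to a careful bookkeeping of the new drift and variance constants, with the probability of $K(i,\epsilon')$ failing added at the end — which is why the author is entitled to omit the full transliteration.
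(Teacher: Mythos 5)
Your overall strategy — condition on $\tilde{\mathcal{V}}_i$ and $K(i,\epsilon')$, obtain a subcritical drift from Lemma~\ref{esperances}, and convert the drift into an exponential bound on excursion lengths via concentration — is the right one, and matches what the paper defers to (Theorem~3 of \cite{O20}) with the conditional moment lemmas \ref{sum_capacities}–\ref{esperances} supplying the new input. However, the step where you dismiss the union bound is where the argument actually breaks.

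You claim that after obtaining a failure probability of order $\exp(-f(i+1)^\epsilon/A)$ for a single excursion starting point, "the union bound over the $O(\ell_n/T)$ possible starting points only adds a polynomial factor that is absorbed into $A$." This cannot work as stated: $A$ is a constant depending only on the law of $W$, while $\ell_n/T \asymp f(i)^{1-\epsilon}\ell_n^{1/3}$ grows polynomially in $n$. For $f(i)$ a fixed large constant, $\ell_n^{1/3}\exp(-f(i)^\epsilon/A)$ is enormous, yet the theorem's bound must still hold (and indeed it is nontrivial once $F$ is chosen so that $A\exp(-F^\epsilon/A) < 1$). The correct argument must exploit the fact that the drift becomes strictly more negative as the exploration advances — the $\tfrac{l}{\ell_n}(1-C)$ term in Lemma~\ref{esperances} is exactly that — so that excursions started at time $l$ are exponentially unlikely in $l$, and the sum over starting times converges without producing a stray $\ell_n^{1/3}$. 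This is precisely the device the paper uses explicitly in Lemma~\ref{lem13} and the proof of Theorem~\ref{them15} (partitioning time into intervals $I_j$ where the drift is roughly $\Theta(j f/\ell_n^{1/3})$ and letting the resulting geometric decay kill the polynomial count of components in each interval), and it is an essential ingredient, not a cosmetic refinement.

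A secondary, more minor point: you invoke Bernstein's inequality with the uniform bound $|c(l+1)-1| = o(n^{1/3})$ "by Condition (vii)." Condition (vii) bounds the node weights, not the number of children; $c(l+1)$ is a Poisson-binomial with mean $\approx w_{\tilde v(l+1)}$ and unbounded support, so the uniform bound does not hold deterministically. One needs either a truncation of the child counts (with an exponentially small truncation error) or a direct Chernoff bound on the moment-generating function, as the paper does in the proof of Lemma~\ref{sum_capacities} for the weights. This is fixable but should be stated.
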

\begin{Theorem}
\label{Theo10}
Let $\textnormal{Exc}$ be the maximal excess of the connected components of $G(\mathcal{V} \setminus \tilde{\mathcal{V}}_i, p_{f(i+1)})$. There exists a positive constant $A >0$ such that, for any $1 \geq \epsilon > 0$ the probability of $$\textnormal{Exc} \geq Af(i+1)^{\epsilon},$$ is at most:
$$ A\left(\exp\left(\frac{-f(i+1)^{\epsilon/2}}{A}\right)+\exp\left(\frac{-\sqrt{f(i+1)}}{A}\right)+\exp\left(\frac{-n^{1/12}}{A}\right)\right).
$$
\end{Theorem}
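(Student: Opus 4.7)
The plan is to mirror, in the present conditional setting, the proof of Theorem $4$ of \cite{O20}, exactly as the author has just done in deriving Theorem \ref{Theo11}. I first condition on $K(i,\epsilon')$, whose complement has probability at most $A\exp(-f(i)/A)$; since $f(i+1)=(3/2)f(i)$, this contribution is absorbed into the $\exp(-\sqrt{f(i+1)}/A)$ term of the target bound. Conditionally on $\tilde{\mathcal{V}}_i$, the graph $G(\mathcal{V}\setminus\tilde{\mathcal{V}}_i,p_{f(i+1)})$ is an inhomogeneous random graph on $\mathcal{V}\setminus\tilde{\mathcal{V}}_i$, and by Lemma \ref{esperances} the expected increments of its BFW exploration are at most $1-\Delta_n$ with $\Delta_n=\Theta(f(i+1)/\ell_n^{1/3})$; the graph is therefore subcritical in precisely the regime analysed in \cite{O20}.

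Next, I invoke Theorem \ref{Theo11} so that, outside an event whose probability is already bounded by the target tail, every connected component $\mathcal{C}$ of $G(\mathcal{V}\setminus\tilde{\mathcal{V}}_i,p_{f(i+1)})$ has size at most $\ell_n^{2/3}/(Cf(i+1)^{1-\epsilon})$ and weight at most $(1+\epsilon')\ell_n^{2/3}/(Cf(i+1)^{1-\epsilon})$. Running the BFW inside such a component, the excess of $\mathcal{C}$ equals the total count of back-edges from a newly explored vertex $\tilde v(k)\in\mathcal{C}$ to a discovered-but-not-yet-explored vertex other than its parent, summed over $k$. Conditional on the history up to step $k$, these back-edges form a Bernoulli family whose conditional expectations are controlled by the size-biased moment estimates of Lemmas \ref{mean_poww_capacities_2}--\ref{mean_corrr}. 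A Chernoff argument in the spirit of Lemma \ref{sum_capacities}, applied to this family, then yields that the excess of a single component exceeds $Af(i+1)^{\epsilon}$ with conditional probability at most $\exp(-f(i+1)^{\epsilon/2}/A)$; a union bound over the at most $n$ such components, together with the bad-event contribution from Theorem \ref{Theo11}, produces the three-term probability in the statement.

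The main technical obstacle is the nonindependence of the back-edge indicators across exploration steps, which rules out a direct Bernstein bound and forces the use of the size-biased moment lemmas: the key cancellation making the argument succeed is between the crude $W_\mathcal{C}^2 p_{f(i+1)}/2$ edge-count estimate, which dominates the target on its own, and the subcritical drift $-\Delta_n$ per exploration step, which accounts for one tree-edge per discovered vertex. However, all of the moment lemmas have already been established in this section in the conditional form multiplied by $\mathbbm{1}(K(i,\epsilon'))$, so the proof of Theorem $4$ of \cite{O20} transfers essentially verbatim: replace $f(i)$ by $f(i+1)$, insert the indicator $\mathbbm{1}(K(i,\epsilon'))$, and enlarge the constant $A$ to absorb the resulting multiplicative slack. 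No new ideas beyond this translation are required.
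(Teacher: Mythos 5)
Your proposal is correct and takes essentially the same route the paper itself indicates. The paper explicitly omits the proofs of Theorems \ref{Theo11} and \ref{Theo10}, arguing only that, conditionally on $\tilde{\mathcal{V}}_i$ and on the event $K(i,\epsilon')$, the graph $G(\mathcal{V}\setminus\tilde{\mathcal{V}}_i,p_{f(i+1)})$ is a slightly subcritical rank-1 inhomogeneous random graph whose BFW increments are controlled by the conditional size-biased moment lemmas (Lemmas \ref{sum_capacities}--\ref{esperances}), so that the method of proof of Theorem~4 of \cite{O20} carries over verbatim with enlarged constants, after which one adds $\mathbb{P}(\bar{K}(i,\epsilon'))\leq A\exp(-f(i)/A)$ to the tail bound --- which, as you note, is dominated by the $\exp(-\sqrt{f(i+1)}/A)$ term since $f(i)=\tfrac{2}{3}f(i+1)\geq F$ is large. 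Your elaboration of the internal mechanics (back-edge counting, Chernoff over components, union bound) is a plausible reconstruction of what Theorem~4 of \cite{O20} does, though the paper does not commit itself to those details; the only point I would be careful about stating as fact is the claim that a direct Bernstein bound is ``ruled out'' --- once the vertex order $(\tilde v(1),\tilde v(2),\dots)$ is fixed, the back-edge indicators are conditionally independent Bernoullis, and the paper does lean on Bernstein-type bounds elsewhere in exactly this conditional fashion.
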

Those theorems deal with the sizes, weights and excesses of the connected components of $G(\mathcal{V} \setminus \tilde{\mathcal{V}}_i, p_{f(i+1)})$. In order to prove bounds on the diameter of those components, we still need to bound the height of some well-chosen spanning trees and use Lemma \ref{excess_height}. This is why we bound the height of their exploration trees.
\subsection{Coupling with Galton-Watson trees}
\label{section5}
In this part, $i \geq 1$ is fixed. The construction described bellow is done conditionally on $\tilde{\mathcal{V}}_i$, the vertices set of the largest connected component of $G(\textbf{W},p_{f(i)})$.\par
Let $\tilde{L}$ be the exploration process of $G(\mathcal{V} \setminus \tilde{\mathcal{V}}_i, p_{f(i+1)})$. And for $k \geq 1$, let $\tilde{c}(k)$ be the number of children of node $\tilde{v}(k)$ in the exploration process. If a connected component of $G(\mathcal{V} \setminus \tilde{\mathcal{V}}_i, p_{f(i+1)})$ is discovered at time $k$ then its exploration process will be:
$$
\tilde{L}'_k(0)=1.
$$
And for $i \geq 1$:
$$
\tilde{L}'_k(i+1) = \tilde{L}'_k(i)+\tilde{c}(k+i-1)-1.
$$
This exploration process stops when it hits $0$. 
For $(k,l) \in (\mathcal{V} \setminus \tilde{\mathcal{V}}_i)^2$, let $X(k,l,i+1)$ be a Bernoulli random variable of parameter 
$$1-\exp(-w_{k}w_{l}p_{f(i+1)}).$$
Then for any $k \geq 1$, by definition of the exploration process, if a connected component is discovered at time $k$ the number of children of node $\tilde{v}(k)$ has the the same distribution as:
$$
\sum_{\tilde{v}(l) \in \mathcal{V} \setminus \tilde{\mathcal{V}}_i, \, l \geq 1+k} X(\tilde{v}(k),\tilde{v}(l),i+1).
$$
Generally, if node $\tilde{v}(k+i)$ is in the same connected component as $\tilde{v}(k)$, then the distribution of $\tilde{c}(k+i)$  will be the same as the distribution of:
$$
\sum_{l \geq \tilde{L}'_k(i)+k+i} X(\tilde{v}(k+i),\tilde{v}(l),i+1).
$$
For each $k \geq 0$, let $\tilde{\mathbb{T}}(k)$ be the tree with exploration process $\tilde{L}'_k$. If we can bound the height of $\tilde{\mathbb{T}}(k)$  for every $k \geq 1$, then we will have a bound on the height of the largest exploration tree in $G(\mathcal{V} \setminus \tilde{\mathcal{V}}_i, p_{f(i+1)})$. 
In order to do so, we create  a Galton-Watson tree $\mathbb{T}(k)$ that contains a sub-tree with the same distribution as $\tilde{\mathbb{T}}(k)$ in the following way: \par
Conditionally on $\textbf{V}_{k-1} = (\tilde{v}(1), \tilde{v}(2), ..., \tilde{v}(k-1))$ and $\tilde{\mathcal{V}}_i$, we construct a Galton-Watson tree $\mathbb{T}(k)$. Start by one node, and for each new node created follow these two steps. First give the node a label which is a random variable with the distribution of $\tilde{v}(k)$ independently of everything else. And then conditionally on the node  label being $r$ give it a random number of children which is a Poisson random variable of parameter:
$$w_{r}\left(\ell_n-\sum_{l \in \textbf{V}_{k-1}\cup \tilde{\mathcal{V}}_i}w_l\right)p_{f(i+1)},$$
independently of everything else \footnote{Here we abused notations by using the $\textbf{V}_{k-1}$ as a set and as a sequence at the same time. It remains clear from the context which concept we are using.}. 
Remark that this process may not end, in that case we get an infinite tree.  $\mathbb{T}(k)$ is thus a Galton-Watson tree with reproduction distribution $\mu(k,i+1)$, which is, conditionally on $(\textbf{V}_k,\tilde{\mathcal{V}}_i) $, a Poisson distribution of parameter $$w_{\tilde{v}(k)}\left(\ell_n-\sum_{l \in \textbf{V}_{k-1}\cup \tilde{\mathcal{V}}_i}w_l\right)p_{f(i+1)}.$$
In order to have a clear order on the nodes of $\mathbb{T}(k)$, we sort them in a breadth-first search fashion while creating the tree.
We start from one node, which is the root of $\mathbb{T}(k)$. We give it order $1$, label $r(1)$. The root then has $b(1)$ children with the distributions described above. 
The children of the root are given orders in $(2,3...,b(1)+1)$ randomly uniformly. Now do  same for the the node with order $2$ and order its children $(b(1)+2,b(1)+2,...,b(1)+b(2)+1)$ randomly uniformly too. Continue in this fashion until there are no new nodes or the construction never ends. We call this the breadth-first order of $\mathbb{T}(k)$ . \par
We construct a subtree $\mathbb{T}'(k)$ by pruning $\mathbb{T}(k)$ with the following algorithm.  $\mathbb{L}$ is the list of nodes currently investigated by the algorithm. Initially $\mathbb{L}$ contains just the root : 
\begin{enumerate}
    \item At step $1$ color the root in red, then add the children of the root to $\mathbb{L}$.
    \item At step $i$ consider the $i$-th node added to $\mathbb{L}$ following the breadth-first order. If there are no other red nodes with the same label yet, then color it in red and add its childer to $\mathbb{L}$.
    \item Continue like this as long as $\mathbb{L}$ is not empty.
\end{enumerate}
\begin{figure}[!htbp]
\centering
\includegraphics[width=1.\textwidth]{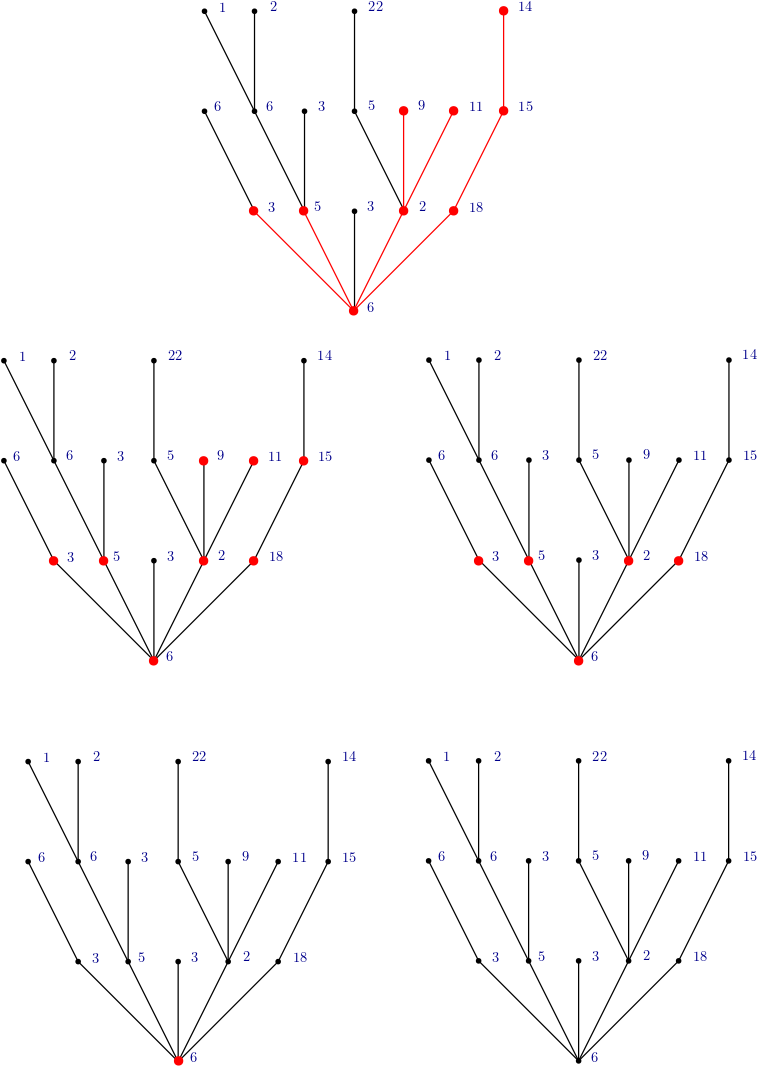}
\caption{An example of the colouring procedure. The breath-first order is implicitly induced by the drawing, nodes are ordered from bottom to top and from left to right. Each node has its label directly to its right. We only show the steps where a whole generation has been considered.}
\end{figure}
Since there is only a finite number of labels, this process will eventually end. The nodes are always removed from $\mathbb{L}$ following the breadth-first order. After finishing this procedure, we obtain a subtree of $\mathbb{T}(k)$ composed only of red nodes that we denote by $\mathbb{T}'(k)$. This fact is easily seen from the procedure since a node can only be red if its parent is also red. Moreover, we have the following lemma. This result was already shown in \cite{no06} (proposition $3.1$) and proven again in \cite{SRJ10} for $\mathbb{T}'(1)$. The proof for general $\mathbb{T}'(k)$ is the same conditionally on $\textbf{V}_{k-1}$, and so we refer to those articles for a proof.
\begin{Lemma}
\label{lem9}
For $k \geq 1$, conditionally on $(\tilde{v}(1), \tilde{v}(2), ..., \tilde{v}(k-1))$, the tree $\mathbb{T}'(k)$  has the same distribution as $\tilde{\mathbb{T}}(k)$.
\end{Lemma}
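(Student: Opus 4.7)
The plan is to verify the claim by induction on the breadth-first exploration of $\mathbb{T}'(k)$, matching its red-colouring step by step against the breadth-first walk of $\tilde{\mathbb{T}}(k)$, conditionally on $\mathbf{V}_{k-1}$ and $\tilde{\mathcal{V}}_i$. The key tool is the Poisson marking (thinning) property: if $N\sim\mathrm{Poisson}(\lambda)$ and the $N$ points receive independent labels drawn from a distribution $\pi$ on a finite set $S$, then the counts $(N_l)_{l\in S}$ are independent with $N_l \sim \mathrm{Poisson}(\lambda\pi(l))$.

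First I would apply this inside $\mathbb{T}(k)$. By construction, a node of label $r$ has a Poisson$(\lambda_r)$ number of children with
$$\lambda_r = w_r\Bigl(\ell_n - \sum_{l\in \mathbf{V}_{k-1}\cup\tilde{\mathcal{V}}_i} w_l\Bigr)p_{f(i+1)},$$
and each child independently carries the label $l\notin \mathbf{V}_{k-1}\cup\tilde{\mathcal{V}}_i$ with probability $\pi(l)=w_l/\bigl(\ell_n - \sum_{l'\in \mathbf{V}_{k-1}\cup\tilde{\mathcal{V}}_i} w_{l'}\bigr)$. By marking, the number of children of this node that carry any fixed label $l$ is an independent $\mathrm{Poisson}(w_rw_l p_{f(i+1)})$ variable. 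Consequently, the events ``the node has at least one child of label $l$'' form an independent family of Bernoullis of parameters $1-\exp(-w_rw_l p_{f(i+1)})$ indexed by $l\notin \mathbf{V}_{k-1}\cup\tilde{\mathcal{V}}_i$. This is exactly the law of the family $(X(r,l,i+1))_l$ that governs the offspring of a node of label $r$ in the breadth-first walk of $\tilde{\mathbb{T}}(k)$.

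With this in hand, I would run the coupling inductively along the breadth-first order. Let $\mathcal{S}$ denote the set of already red-coloured labels at a given stage, and let $u$ be the next node of $\mathbb{T}(k)$ processed in breadth-first order. By the pruning rule, $u$ is declared red iff its label lies outside $\mathcal{S}$; given this, the labels of $u$'s children that will eventually also be red are exactly those $l\notin\mathcal{S}\cup\{\text{label of }u\}$ for which $u$ has at least one child of label $l$ in $\mathbb{T}(k)$, because the algorithm keeps only the first occurrence of each label in breadth-first order. By the previous paragraph, this set of new red labels has the same distribution as the set of children of an explored node of the same label in $\tilde{\mathbb{T}}(k)$ when the set of forbidden vertices is precisely $\mathcal{S}$. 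The root case is handled identically using that the size-biased law of the root label of $\mathbb{T}(k)$ matches the law of $\tilde{v}(k)$ given $\mathbf{V}_{k-1}$ and $\tilde{\mathcal{V}}_i$ by the size-biased sampling lemma. Iterating in breadth-first order therefore produces the same joint law for both labelled trees.

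The only delicate point is the bookkeeping that, at every stage of the pruning, the set $\mathcal{S}$ of already red-coloured labels coincides in distribution with the set of vertices already discovered or explored by the breadth-first walk of $\tilde{\mathbb{T}}(k)$. The Poisson construction is crucial here: with a Binomial reproduction law the per-label counts would not be independent and the clean per-label matching above would break down. With Poisson marking in hand the inductive step becomes essentially tautological, which is why the argument written in \cite{no06} and \cite{SRJ10} for $k=1$ extends verbatim to general $k\ge 1$ after the extra conditioning on $\mathbf{V}_{k-1}$ and $\tilde{\mathcal{V}}_i$.
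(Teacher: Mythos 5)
Your proof is correct and reproduces the standard Norros--Reittu Poisson-marking argument (Proposition~3.1 of \cite{no06}, also in \cite{SRJ10}), which is precisely what the paper cites for this lemma without rewriting it. The one place to be careful is that the forbidden set against which a child $c$ of $u$ is tested in the pruning is the red set at the moment $c$ itself is dequeued from $\mathbb{L}$, which in general strictly contains $\mathcal{S}\cup\{\text{label of }u\}$ because intervening nodes at the same generation may already have been coloured; this is exactly the ``bookkeeping'' point you flag, and it goes through because Poisson marking makes the per-label offspring counts of each red node independent of everything revealed before that node's children are examined.
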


By Lemma \ref{lem9}, in order to bound the heights of the exploration trees of $G(\mathcal{V} \setminus \tilde{\mathcal{V}}_i, p_{f(i+1)})$ it is sufficient to bound the heights of the trees $\mathbb{T}'(k)$ for $k \geq 1$. Since the  $\mathbb{T}'(k)$'s are substrees of the $\mathbb{T}(k)$'s, we will bound the heights of the later. The following lemma revolves around classical results on Galton-Watson trees. 
\begin{Lemma}
\label{lem12}
For $k \geq 1$. Let $H(\mathbb{T}(k))$ be the height of tree $\mathbb{T}(k)$. For $m \geq 0$, denote the event $\left\{H(\mathbb{T}(k)) \geq m\right\} \cap  K(i,1/8)$ by $\textbf{Q}(k,m)$. If  $ k \leq \ell_n^{6/7}$ then:
$$
\mathbb{P}\left(\textbf{Q}\left(k,\frac{2\ell_n^{1/3}}{\sqrt{f(i+1)}}\right)\right) \leq  \frac{A\sqrt{f(i+1)}}{{\ell_n^{1/3}} }\exp\left(\frac{-Ck}{2\sqrt{f(i+1)}\ell_n^{2/3}} - \frac{\sqrt{f(i+1)}}{8}\right) .
$$
And if $k \geq \ell_n^{6/7}$ then:
\begin{equation*}
\begin{aligned}
\mathbb{P}\left(\textbf{Q}\left(k,\frac{2\ell_n^{1/3}}{\sqrt{f(i+1)}}\right)\right) \leq \frac{A\sqrt{f(i+1)}}{{\ell_n^{1/3}} }\exp\left(\frac{-C\ell_n^{6/7}}{2\sqrt{f(i+1)}\ell_n^{2/3}} - \frac{\sqrt{f(i+1)}}{8}\right).
\end{aligned}
\end{equation*}
\end{Lemma}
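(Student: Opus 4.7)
The plan is to leverage the Galton--Watson structure of $\mathbb{T}(k)$: show it is slightly subcritical on the event $K(i,1/8)$, with a subcriticality gap $\delta_k$ built from two contributions -- a baseline of order $f(i+1)/\ell_n^{1/3}$ inherited from having deleted the giant component of $G(\textbf{W},p_{f(i)})$, and an additional contribution of order $Ck/\ell_n$ coming from the size-biased rank of the root label. A near-critical height tail estimate for Galton--Watson trees with bounded-variance offspring then converts the subcriticality gap into the desired double decay. The main obstacle will be getting the polynomial prefactor $\sqrt{f(i+1)}/\ell_n^{1/3}$, which the naive Markov bound $\mathbb{E}[Z_h]=m^h$ does not see.

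First, I would compute the mean offspring. Conditionally on $(\textbf{V}_{k-1},\tilde{\mathcal{V}}_i)$, each vertex of $\mathbb{T}(k)$ receives a random label $L$ size-biased on $\mathcal{V}\setminus(\textbf{V}_{k-1}\cup\tilde{\mathcal{V}}_i)$ and, given $L=r$, a Poisson$(w_r(\ell_n - w_{\textbf{V}_{k-1}} - w_{\tilde{\mathcal{V}}_i})p_{f(i+1)})$ number of children. The normalisation in the size-biased law cancels, giving the simple expression $\mathbb{E}[N\mid \textbf{V}_{k-1},\tilde{\mathcal{V}}_i] = \bigl(\sum_{j\notin\textbf{V}_{k-1}\cup\tilde{\mathcal{V}}_i} w_j^2\bigr)\, p_{f(i+1)}$. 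I would estimate this using Conditions \ref{cond_nodess} ($\sum_j w_j^2 = \ell_n + o(n^{2/3})$), a concentration estimate on $\sum_{j\in\tilde{\mathcal{V}}_i}w_j^2$ obtainable from the exploration analysis of \cite{O20} together with $\mathbb{E}[w_{v(u)}^2]=C+o(1)$ (yielding $\sum_{j\in\tilde{\mathcal{V}}_i}w_j^2 \geq 2f(i)\ell_n^{2/3}(1-o(1))$ w.h.p.\ on a sub-event of $K(i,1/8)$), and a Bernstein argument parallel to Lemma \ref{sum_capacities} but applied to $w_{\tilde v(l)}^2$ (giving $\sum_{j\in\textbf{V}_{k-1}} w_j^2 \geq Ck(1-o(1))$ w.h.p. for $k\leq \ell_n^{6/7}$). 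Plugging in $p_{f(i+1)}=1/\ell_n + f(i+1)/\ell_n^{4/3}$ and exploiting $f(i+1)=\tfrac32 f(i)$ so that $2f(i)-f(i+1)=f(i)/2>0$, the mean drops below $1-\delta_k$ with $\delta_k \geq c_1 f(i+1)/\ell_n^{1/3} + c_2 Ck/\ell_n$ for explicit constants $c_1,c_2>0$.

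Second, let $\phi(s)=\mathbb{E}[e^{\lambda(s-1)}]$ denote the mixed-Poisson offspring PGF, with $\phi'(1)=m=1-\delta_k$ and $\phi''(1)=m^2+\mathrm{Var}(\lambda)=O(C)$ by Lemma \ref{mean_poww_capacities_2}. Writing $q_h=\mathbb{P}(H(\mathbb{T}(k))\geq h\mid \textbf{V}_{k-1},\tilde{\mathcal{V}}_i)$, the standard recursion $q_{h+1}=1-\phi(1-q_h)$ with $q_0=1$, combined with the convex lower bound $\phi(1-x)\geq 1-(1-\delta_k)x+\tfrac12\phi''(1)x^2 - O(x^3)$ valid for small $x$, yields by induction a Kolmogorov-type estimate $q_h \leq A'\delta_k/(e^{\delta_k h}-1)$ for an absolute constant $A'>0$. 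Substituting $h=2\ell_n^{1/3}/\sqrt{f(i+1)}$, the product $\delta_k h$ splits as $\sqrt{f(i+1)}/c_1'+c_2'Ck/(\sqrt{f(i+1)}\ell_n^{2/3})$, matching the two exponential factors in the target bound, while the Kolmogorov prefactor $A'\delta_k/(e^{\delta_k h}-1)\leq A''/h = A''\sqrt{f(i+1)}/\ell_n^{1/3}$ furnishes the polynomial factor. Integrating over $\textbf{V}_{k-1}$ and adding the (exponentially small) probability of the sub-event where the Step 1 estimates fail concludes the case $k\leq \ell_n^{6/7}$.

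For the case $k\geq \ell_n^{6/7}$ I would argue by monotonicity: on $K(i,1/8)$ the conditional mean offspring $\sum_{j\notin\textbf{V}_{k-1}\cup\tilde{\mathcal{V}}_i}w_j^2\cdot p_{f(i+1)}$ is non-increasing in $k$, so the offspring law is stochastically dominated by the one at $k=\ell_n^{6/7}$, and consequently the height of $\mathbb{T}(k)$ is stochastically dominated by that of $\mathbb{T}(\lfloor\ell_n^{6/7}\rfloor)$; the bound for $k=\ell_n^{6/7}$ established above then gives the second stated inequality. The hardest point in this plan is the sharp iteration of the PGF for the \emph{mixed}-Poisson offspring in Step 2: it is not enough to apply Markov's inequality (which captures only the exponential factor), and one must carefully control $\phi''(1)$ uniformly in the realisation of $(\textbf{V}_{k-1},\tilde{\mathcal{V}}_i)$ on $K(i,1/8)$ in order to get the Kolmogorov-type prefactor $\sqrt{f(i+1)}/\ell_n^{1/3}$.
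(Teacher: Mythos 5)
Your proposal is correct and arrives at the stated bound, but the central iteration (your Step~2) proceeds along a genuinely different route from the paper's proof. The paper's argument is a two-stage hybrid: it applies the Markov-type recursion $\mathbb{P}(H\ge m)\le \mathbb{E}[\mu]\,\mathbb{P}(H\ge m-1)$ over the first $m/2$ generations to extract the exponential factor $\mathbb{E}[\mu]^{m/2}$, and then invokes the critical Galton--Watson height-tail bound $\mathbb{P}(H\ge m/2)\le A/m$ (citing Lemma~6.7 of~\citet{RB18}) to obtain the polynomial prefactor $\sqrt{f(i+1)}/\ell_n^{1/3}$. Your Step~2 instead derives the sharper Kolmogorov-type estimate $q_h\le A'\delta_k/(e^{\delta_k h}-1)$ for the near-critical offspring PGF directly, which packages both the exponential decay and the $1/h$ prefactor in a single inequality. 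The two are essentially interchangeable here; your version avoids splitting the depth in two and citing the external critical bound, at the cost of a more delicate uniform control of $\phi''(1)$ on the event $K(i,1/8)$ (which you correctly flag as the hard point). Your Step~1 (computing $\mathbb{E}[\mu]=1-\delta_k$ with $\delta_k\gtrsim f(i+1)/\ell_n^{1/3}+Ck/\ell_n$) is substantively the same as the paper's appeal to Lemma~\ref{esperances}, though you elect to control $\sum_{j\in\tilde{\mathcal{V}}_i}w_j^2$ directly rather than going through the one-step expectation of $w_{\tilde v(k)}$, which the paper's lemma packages for you.

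One point that needs tightening: in your argument for the case $k\ge\ell_n^{6/7}$, you justify stochastic domination of the offspring law by observing that the \emph{conditional mean} $\sum_{j\notin\textbf{V}_{k-1}\cup\tilde{\mathcal{V}}_i}w_j^2\,p_{f(i+1)}$ is non-increasing in $k$. That implication is not valid in general (a smaller mean does not by itself give stochastic dominance). The correct justification is the one the paper uses: by Lemma~\ref{decreasingg} the label weight $w_{\tilde v(k)}$ is stochastically non-increasing in $k$, and simultaneously the deterministic factor $\ell_n-\sum_{l\in\textbf{V}_{k-1}\cup\tilde{\mathcal{V}}_i}w_l$ is pathwise non-increasing in $k$; since the mixed-Poisson offspring is monotone in both of these quantities, the whole offspring law is stochastically dominated by that at $k=\lfloor\ell_n^{6/7}\rfloor$, and the height tail inherits the domination. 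With that replacement, your argument for the second inequality goes through.
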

\begin{proof}
For simplicity and without loss of generality suppose that $m$ is even. Clearly:
\begin{equation}
\begin{aligned}
\label{eqf11}
\mathbb{P}(\textbf{Q}(k,m)) &\leq \mathbb{E}[\mu(k,i+1)\mathbbm{1}(K(i,1/8))]\mathbb{P}(\textbf{Q}(k,m-1)))\\
&\leq \mathbb{E}[\mu(k,i+1)\mathbbm{1}(K(i,1/8))]^{m/2}\mathbb{P}(\textbf{Q}(k,m/2))
\end{aligned}
\end{equation}
Moreover, $\mathbbm{1}(K(i,1/8))$ is measurable with regard to $\tilde{\mathcal{V}}_i$ by definition. This remark and Lemma \ref{mean_corrr} alongside the fact that for $x \in \mathbb{R}$, $1-e^{-x} \leq x$ yield:
\begin{equation*}
    \begin{aligned}
    \mathbb{E}[\mu(v(k),i+1)\mathbbm{1}(K(i,1/8))] &= \mathbb{E}\left[\mathbb{E}\left[\mu(v(k),i+1)\mathbbm{1}(K(i,1/8))\middle| \tilde{\mathcal{V}}_i, \textbf{V}_k\right]\right] \\
    &\leq \mathbb{E}\left[w_{\tilde{v}(k)}\left(\ell_n-\sum_{l \in \textbf{V}_{k-1}\cup \tilde{\mathcal{V}}_i}w_l\right)p_{f(i+1)} \mathbbm{1}(K(i,1/8))\right] \\
    &\leq \mathbb{E}\left[w_{\tilde{v}(k)}\left(\ell_n-\frac{7f(i)\ell_n^{2/3}}{4C}\right)p_{f(i+1)}\mathbbm{1}(K(i,1/8))\right]-\frac{k}{\ell_n}(1+o(1)).
    \end{aligned}
\end{equation*}
Hence, by Lemma \ref{esperances}, a quick calculation yields for $k \leq \ell_n^{6/7}$:
\begin{equation}
\begin{aligned}
\label{eqf12}
&\mathbb{E}[\mu(k,i+1)\mathbbm{1}(K(i,1/8))] \\
&\leq \left(1+\frac{k+\frac{7}{4C}f(i)\ell_n^{2/3}}{\ell_n}\left(1-C\right)\right)\left(1+\frac{f(i+1)}{\ell_n^{1/3}}-\frac{7f(i)}{4C\ell_n^{1/3}}\right) - \frac{k}{\ell_n}+ o\left(\frac{k+f(i)n^{2/3}}{n}\right) \\
&\leq 1-\frac{Ck}{\ell_n}-\frac{f(i)}{4\ell_n^{1/3}}+o\left(\frac{k+f(i)n^{2/3}}{n}\right)
\end{aligned}
\end{equation}
It is also well known, for critical Galton-Watson trees (see for example Lemma $6.7$ of \citet*{RB18}), that there exists a constant $A>0$ such that for any $m\geq 0$:
\begin{equation}
\label{eq13}
\mathbb{P}(\textbf{Q}(k,m/2)) \leq \frac{A}{m},
\end{equation}
this result is thus also true for a subcritical Galton-Watson tree. \par
Hence, by Equation \eqref{eqf11}, \eqref{eqf12}, and \eqref{eq13}, a simple computation yields:
\begin{equation}
\begin{aligned}
\label{eq14}
&\mathbb{P}\left(\textbf{Q}\left(k,\frac{2\ell_n^{1/3}}{\sqrt{f(i+1)}}\right)\right) \leq \frac{A\sqrt{f(i+1)}}{{\ell_n^{1/3}} }\mathbb{E}[\mu(k,i+1)\mathbbm{1}(K(i,1/8))] ^{\frac{\ell_n^{1/3}}{\sqrt{f(i+1)}}}\\
&=\frac{A\sqrt{f(i+1)}}{{\ell_n^{1/3}} }\exp\left(\frac{-Ck}{\sqrt{f(i+1)}\ell_n^{2/3}} - \frac{\sqrt{f(i+1)}}{4}+o\left(\frac{k+f(i)n^{2/3}}{\sqrt{f(i+1)}n^{2/3}}\right)\right)\\
&\leq \frac{A\sqrt{f(i+1)}}{{\ell_n^{1/3}} }\exp\left(\frac{-Ck}{2\sqrt{f(i+1)}\ell_n^{2/3}} - \frac{\sqrt{f(i+1)}}{8}\right) 
\end{aligned}
\end{equation}

Moreover, for $k \geq \ell_n^{6/7}$, by Lemma \ref{decreasingg}:
$$
\mathbb{E}[w_{\tilde{v}(k)}\mathbbm{1}(K(i,1/8))] \leq \mathbb{E}\left[w_{\tilde{v}\left(\ell_n^{6/7}\right)}\mathbbm{1}(K(i,4))\right].
$$
With this fact and Equations \eqref{eq14}, we get for $k \geq \ell_n^{6/7}$:
\begin{equation*}
\begin{aligned}
\mathbb{P}\left(\textbf{Q}\left(k,\frac{2\ell_n^{1/3}}{\sqrt{f(i+1)}},\right) \geq \frac{2\ell_n^{1/3}}{\sqrt{f(i+1)}}\right) \leq \frac{A\sqrt{f(i+1)}}{{\ell_n^{1/3}} }\exp\left(\frac{-C\ell_n^{6/7}}{2\sqrt{f(i+1)}\ell_n^{2/3}} - \frac{\sqrt{f(i+1)}}{8}\right)
\end{aligned}
\end{equation*}
\end{proof}
In order to bound the height of all the exploration trees at once, we need a bound on the number of connected components in $G(\mathcal{V} \setminus \tilde{\mathcal{V}}_i, p_{f(i+1)})$. The order of nodes in the exploration process of $G(\mathcal{V} \setminus \tilde{\mathcal{V}}_i, p_{f(i+1)})$ has the same distribution as that of $G(\mathcal{V} \setminus \tilde{\mathcal{V}}_i, p_{f(i)})$. Moreover conditionally on that order, the exploration process of $G(\mathcal{V} \setminus \tilde{\mathcal{V}}_i, p_{f(i)})$ is stochastically upper bounded by the exploration process of $G(\mathcal{V} \setminus \tilde{\mathcal{V}}_i, p_{f(i+1)})$. Since, by definiton, the number of connected components discovered in an interval of time of the BFW correspond to the number of new minimums attained in that same interval. It is clear by this argument that for any $m \in \mathbb{N}$ and $(t_1,t_2) \in \mathbb{N}^2$ such that $ t_1 \leq t_2$, the probability of discovering more than  $m$ connected components in the BFW  of $G(\mathcal{V} \setminus \tilde{\mathcal{V}}_i, p_{f(i+1)})$ between times $t_1$ and $t_2$ is smaller than the probability of discovering more than $m$ connected components in the BFW of $G(\mathcal{V} \setminus \tilde{\mathcal{V}}_i, p_{f(i)})$.

\begin{Lemma}
\label{lem13}
Let $r_1=\frac{\ell_n^{2/3}}{\sqrt{f}C}$.
Let $D_0$ be the event "The number of connected component discovered before time $r_1$ in the exploration process of $G(\mathcal{V} \setminus \tilde{\mathcal{V}}_i, p_{f(i+1)})$ is larger than $\frac{\sqrt{f(i)}\ell_n^{1/3}}{2C}$." \par
For $j \geq 1$, consider the interval 
$$I_j=\left[r_1+\frac{(j^2-1)f(i+1)\ell_n^{2/3}}{C} ,r_1+\frac{((j+1)^2-1)f(i+1)\ell_n^{2/3}}{C}\right)$$
and let $\tilde{j}$ be the greatest integer such that:
$$\frac{(\tilde{j}^2-1)f(i+1)\ell_n^{2/3}}{C} \leq \ell_n^{6/7}.$$
For some $\tilde{j} \geq j \geq 1$, let $D_j$ be the event "There are more than $j^3f(i)\ell_n^{1/3}$ connected components discovered in the interval $I_j$ in the exploration process of $G(\mathcal{V} \setminus \tilde{\mathcal{V}}_i, p_{f(i+1)})$." \par
There exists a constant $A >0$ such that:
$$
\mathbb{P}\left(\bigcup_{j=0}^{\tilde{j}}(D_j)\right) \leq A\exp\left(\frac{-\sqrt{f}}{A}\right)+A\exp\left(\frac{-n^{1/12}}{A}\right).
$$
\end{Lemma}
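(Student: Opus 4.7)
The plan is to reduce the claim to estimates already established in \cite{O20} for the exploration of $G(\mathcal{V} \setminus \tilde{\mathcal{V}}_i, p_{f(i)})$. The stochastic comparison noted in the paragraph just preceding the lemma gives the key reduction: with the BFW orders of the two graphs coupled identically, the exploration process of $G(\mathcal{V} \setminus \tilde{\mathcal{V}}_i, p_{f(i+1)})$ dominates that of $G(\mathcal{V} \setminus \tilde{\mathcal{V}}_i, p_{f(i)})$ (each potential edge is more likely in the former), and a new connected component is discovered precisely when the reflected process attains a new minimum. Since a larger process hits fewer new minima, the number of components discovered in any time-interval of the BFW of $G(\mathcal{V} \setminus \tilde{\mathcal{V}}_i, p_{f(i+1)})$ is stochastically dominated by the corresponding count for $G(\mathcal{V} \setminus \tilde{\mathcal{V}}_i, p_{f(i)})$. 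It therefore suffices to prove the same bound for the latter graph.

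On the event $K(i,1/8)$, the exploration process of $G(\mathcal{V} \setminus \tilde{\mathcal{V}}_i, p_{f(i)})$ is slightly subcritical with drift of order $-f(i)/\ell_n^{1/3}$, a regime analyzed extensively in \cite{O20}. I would then invoke the concentration estimates on new minima from that paper -- namely the counterparts of Corollary~36.2 and Equation~(73) there -- each of which already carries an error term of the form $A\exp(-\sqrt{f}/A)+A\exp(-n^{1/12}/A)$. Here the $\exp(-\sqrt{f}/A)$ factor comes from the usual excursion analysis of a slightly subcritical walk, while the $\exp(-n^{1/12}/A)$ factor is produced by Bernstein's inequality applied to sums of increments, using $\max_i w_i=o(n^{1/3})$ as in the proof of Lemma~\ref{sum_capacities}.

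For $D_0$, the interval $[0,r_1]$ with $r_1=\ell_n^{2/3}/(\sqrt{f}C)$ is much shorter than the critical window $f\ell_n^{2/3}/C$, and the number of new minima is controlled by a Bernstein-type deviation of $\sum_{k\leq r_1}(c(k)-1)$ from its near-zero mean; Lemmas~\ref{mean_ccapacities}, \ref{mean_corrr} and \ref{esperances} give the required moment control on $K(i,1/8)$ so that this deviation exceeds $\sqrt{f(i)}\ell_n^{1/3}/(2C)$ only with probability of the stated form. For $j\geq 1$, the same argument is applied to the subwalk on $I_j$: the expected decrement of the exploration process over $I_j$ is of order $j\sqrt{f(i+1)}\ell_n^{1/3}$, so the number of new minima concentrates around a quantity of that order, which is comfortably below the required threshold $j^3 f(i)\ell_n^{1/3}$. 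A union bound over $0\leq j\leq \tilde{j}\leq \ell_n^{O(1)}$ is harmless, since the polynomial blow-up is absorbed into the constant $A$ inside the exponential.

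The main obstacle is not the union bound or the coupling but the bookkeeping: one has to check that the drift-and-variance estimates from \cite{O20}, originally tuned to time scales of the form $(\text{const})f\ell_n^{2/3}$, still apply cleanly to the quadratic-in-$j$ grid $I_j$ and to the mixed-parameter setting $f(i)$ versus $f(i+1)$ with $f(i+1)=\tfrac{3}{2}f(i)$. In particular, one must verify that applying the bounds of \cite{O20} with parameter $f(i)$ still yields the $\exp(-\sqrt{f}/A)$ tail when re-expressed in terms of $f(i+1)$; this is immediate since the two parameters differ only by the multiplicative constant $\sqrt{3/2}$, absorbed into $A$.
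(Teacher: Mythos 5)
Your proposal follows exactly the paper's route: reduce to the exploration of $G(\mathcal{V} \setminus \tilde{\mathcal{V}}_i, p_{f(i)})$ via the stochastic domination observed in the paragraph preceding the lemma, translate component discoveries into new minima of the exploration walk, and import the concentration bounds from \cite{O20} (the paper cites Equation 75 there for $D_0$ and Theorem 26 together with Corollary 36.2 for the $D_j$'s, then applies a union bound). One small quibble is your back-of-envelope estimate that the expected decrement over $I_j$ is of order $j\sqrt{f(i+1)}\ell_n^{1/3}$: integrating the drift $-Ck/\ell_n - f(i)/(4\ell_n^{1/3})$ established in Equation \eqref{eqf12} over the quadratically-spaced interval $I_j$ actually gives a decrement on the order of $j^3 f^2 \ell_n^{1/3}/C$, but since you explicitly delegate the quantitative bounds to the cited results in \cite{O20} rather than to this heuristic, the discrepancy does not affect the structure or validity of your argument.
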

\begin{proof}
By the arguments given before, it is sufficient to show this result for the exploration process of $G(\mathcal{V} \setminus \tilde{\mathcal{V}}_i, p_{f(i)})$.
Recall that the exploration process can only go down by $1$, so the number of connected components  discovered in an interval of time is equal the number of times a new minimum was attained by the exploration process in that same interval of time. Hence, by Equation $75$ in  \cite{O20}, the probability of discovering more than $\frac{\sqrt{f}\ell_n^{1/3}}{2C}$ connected components before time $r_1$ in the exploration process of $G(\mathcal{V} \setminus \tilde{\mathcal{V}}_i, p_{f(i)})$ is at most:
\begin{equation}
\label{eqq2}
 A\exp\left(\frac{-\sqrt{f}}{A}\right).
\end{equation}
And by using the union bound between Theorem $26$ and Corollary $36.2$ in  \cite{O20} we obtain:
\begin{equation}
\label{eqq1}
\mathbb{P}\left(\bigcup_{j=1}^{\tilde{j}}(D_j)\right) \leq A\exp\left(\frac{-\sqrt{f}}{A}\right)+A\exp\left(\frac{-n^{1/12}}{A}\right).
\end{equation}
\end{proof}

With Lemmas \ref{lem12} and \ref{lem13} we get bounds on the height of the exploration trees of $G(\mathcal{V} \setminus \tilde{\mathcal{V}}_i, p_{f(i+1)})$. On the other hand, Theorem \ref{Theo10} gives bounds on its excess.
With this in hand, the following theorem combines those results in order to give an upper bound for the diameter of $G(\mathcal{V} \setminus \tilde{\mathcal{V}}_i, p_{f(i+1)})$.
\begin{Theorem}
\label{them15}
There exists a constant $A > 0$ such that:
\begin{equation*}
\mathbb{P}\left(\textnormal{diam}(G(\mathcal{V} \setminus \tilde{\mathcal{V}}_i, p_{f(i+1)})) \geq \frac{A\ell_n^{1/3}}{f(i+1)^{1/4}}\right) \leq A\left(\exp\left(\frac{-f(i+1)^{1/8}}{A}\right)\right).
\end{equation*}
\end{Theorem}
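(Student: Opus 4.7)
The plan is to combine Lemma~\ref{excess_height} with separate high-probability bounds on the excess and on the height of each exploration tree of $G(\mathcal{V}\setminus\tilde{\mathcal{V}}_i,p_{f(i+1)})$. The excess bound comes directly from Theorem~\ref{Theo10} applied with $\epsilon=1/4$, giving a maximum excess $q\leq Af(i+1)^{1/4}$ with failure probability $A(\exp(-f(i+1)^{1/8}/A)+\exp(-\sqrt{f(i+1)}/A)+\exp(-n^{1/12}/A))$, whose dominant term matches the statement of the theorem. Plugging $h=\frac{2\ell_n^{1/3}}{\sqrt{f(i+1)}}$ and $q=Af(i+1)^{1/4}$ into Lemma~\ref{excess_height} gives $2h(q+1)+q=O(\ell_n^{1/3}/f(i+1)^{1/4})$, which is exactly the diameter bound we want. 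So the heart of the proof is showing that every exploration tree has height at most $\frac{2\ell_n^{1/3}}{\sqrt{f(i+1)}}$ with the desired probability.

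For the height bound, by Lemma~\ref{lem9} it suffices to bound the heights of the Galton--Watson trees $\mathbb{T}(k)$ for every $k$ that starts a new component. I would work on the event $K(i,1/8)$, which fails with probability at most $A\exp(-f(i)/A)\leq A\exp(-f(i+1)^{1/8}/A)$ by Equation~\eqref{eq8}. Partition the starting times according to the intervals from Lemma~\ref{lem13}: (a) the initial window $[1,r_1]$ with $r_1=\ell_n^{2/3}/(\sqrt{f}C)$, containing at most $\sqrt{f(i)}\ell_n^{1/3}/(2C)$ components; (b) the intervals $I_j$ for $1\leq j\leq\tilde{j}$, each containing at most $j^3 f(i)\ell_n^{1/3}$ components; and (c) the tail $k\geq \ell_n^{6/7}$, containing trivially at most $n$ components. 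On the event that Lemma~\ref{lem13} holds, these counts are valid.

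For each range, apply Lemma~\ref{lem12} to each starting time and take a union bound. In range (a) each tree contributes at most $A\sqrt{f(i+1)}/\ell_n^{1/3}\cdot\exp(-\sqrt{f(i+1)}/8)$, and multiplying by $\sqrt{f(i)}\ell_n^{1/3}/(2C)$ yields $O(f(i+1)\exp(-\sqrt{f(i+1)}/8))$. In range (b), for $k\in I_j$ the exponent $-Ck/(2\sqrt{f(i+1)}\ell_n^{2/3})$ becomes $-j^2\sqrt{f(i+1)}/2$, and multiplying by $j^3 f(i)\ell_n^{1/3}$ before summing over $j$ gives a bound of the same order $O(f(i+1)\exp(-\sqrt{f(i+1)}/8))$ by the geometric decay in $j$. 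In range (c), the second case of Lemma~\ref{lem12} yields $\exp(-C\ell_n^{4/21}/(2\sqrt{f(i+1)})-\sqrt{f(i+1)}/8)$; since $f(i+1)=o(n^{1/3})$, the extra factor $\ell_n^{4/21}/\sqrt{f(i+1)}$ grows as a positive power of $n$ and easily absorbs the union-bound factor $n\sqrt{f(i+1)}/\ell_n^{1/3}$. Adding all three contributions together with the failure probabilities from Theorem~\ref{Theo10}, Lemma~\ref{lem13} and the event $K(i,1/8)$ yields the desired total probability bound $A\exp(-f(i+1)^{1/8}/A)$, since this term dominates $\exp(-\sqrt{f(i+1)}/A)$ and $\exp(-n^{1/12}/A)$.

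The main obstacle is the union bound across all starting times $k$: the height bound in Lemma~\ref{lem12} is only uniformly good once $k$ is of order $\sqrt{f(i+1)}\ell_n^{2/3}$, but the exploration discovers many components at small $k$. The partitioning provided by Lemma~\ref{lem13} is exactly tailored to this trade-off -- the quadratic growth of the intervals $I_j$ is precisely what makes the number of components in $I_j$ absorbable by the $\exp(-j^2\sqrt{f(i+1)}/2)$ factor. The tail range $k\geq\ell_n^{6/7}$ is a little delicate because no good count on the number of components is available there, but the doubly-exponentially small single-tree bound suffices to tolerate the trivial upper bound of $n$ components.
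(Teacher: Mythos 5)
Your proposal is correct and follows essentially the same approach as the paper's proof: both condition on $K(i,1/8)$, use the partition from Lemma~\ref{lem13} (initial window, growing intervals $I_j$, tail $k\geq\ell_n^{6/7}$) to control the number of components, apply Lemma~\ref{lem12} with a union bound in each range, and finally combine the resulting height bound with the excess bound from Theorem~\ref{Theo10} at $\epsilon=1/4$ via Lemma~\ref{excess_height}. The only cosmetic difference is that the paper writes the range-(b) contributions explicitly as quantities $B_j$ and does not spell out the arithmetic $2h(q+1)+q$, which you carry out.
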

\begin{proof}
By Equation \eqref{eq8}, there exists a constant $A >0$ such that, the probability of the event  $K\left(i,1/8\right)$ not holding is at most:
\begin{equation}
\label{eq141}
    A\exp\left(\frac{-f(i)}{A}\right).
\end{equation}
When that event holds, it is to sufficient study the event $\textbf{Q}\left(k,\frac{2\ell_n^{1/3}}{\sqrt{f(i+1)}}\right)$ and bound the surplus and then use the union bound. Recall the definition of the events $(D_j)_{j \leq \tilde{j}}$ and the intervals $(I_j)_{j \leq \tilde{j}}$ from Lemma \ref{lem13}. If all those events are verified, then by the union bound, in each interval $I_j$, the  probability that there exists an exploration tree with height larger than $\frac{2\ell_n^{1/3}}{\sqrt{f(i+1)}}$ is at most:
$$
B_j = j^3f(i)\ell_n^{1/3} \frac{A\sqrt{f(i+1)}}{{\ell_n^{1/3}} }\exp\left(\frac{-C(j^2-1)f(i+1)\ell_n^{2/3}}{2C\sqrt{f(i+1)}\ell_n^{2/3}} - \frac{\sqrt{f(i+1)}}{8}\right).
$$
Recall that $r_1=\frac{\ell_n^{2/3}}{\sqrt{f}C}$. The probability that there exists an exploration tree with height larger than $\frac{2\ell_n^{1/3}}{\sqrt{f(i+1)}}$ before time $r_1$ is at most:
$$
B_0 = \frac{\sqrt{f(i)}\ell_n^{1/3}}{2C}\frac{A\sqrt{f(i+1)}}{{\ell_n^{1/3}} }\exp\left(- \frac{\sqrt{f(i+1)}}{8}\right).
$$
Hence, for it to be an exploration tree with height larger than $\frac{2\ell_n^{1/3}}{\sqrt{f(i+1)}}$, either one of the events $(D_j)_{0 \leq j \leq \tilde{j}}$ does not hold or $K\left(i,1/8\right)$ does not hold, or one of the event $\textbf{Q}\left(k,\frac{2\ell_n^{1/3}}{\sqrt{f(i+1)}}\right)$ does not hold or there is a connected component discovered after time $\ell_n^{6/7}$ with an exploration tree with height larger than $\frac{2\ell_n^{1/3}}{\sqrt{f(i+1)}}$. There are less than $n$ connected component discovered after time  $\ell_n^{6/7}$.
Hence, by Equation \eqref{eq141} and Lemmas \ref{lem12} and \ref{lem13}, there exists constants $A >0$ and $A'>0$ such that, the probability that there exists a connected component with an exploration tree of height larger than $\frac{2\ell_n^{1/3}}{\sqrt{f(i+1)}}$ is at most:
\begin{equation}
\begin{aligned}
\label{eqf142}
&A\exp\left(\frac{-f(i)}{A}\right)+\sum_{j=0}^{\tilde{j}} B_j + n\frac{A\sqrt{f(i+1)}}{{\ell_n^{1/3}} }\exp\left(\frac{-C\ell_n^{6/7}}{2\sqrt{f(i+1)}\ell_n^{2/3}} - \frac{\sqrt{f(i+1)}}{8}\right)+\mathbb{P}\left(\bigcup_{j=1}^{\tilde{j}}(D_j)\right) \\
&\leq A'\exp\left(\frac{-\sqrt{f(i)}}{A'}\right)+A'\exp\left(\frac{-n^{1/12}}{A'}\right).
\end{aligned}
\end{equation}
Recall that $\textnormal{Exc}$ is the maximal excess of the connected components of $G(\mathcal{V} \setminus \tilde{\mathcal{V}}_i, p_{f(i+1)})$. By Theorem \ref{Theo10}, there exists a positive constant $A'' >0$ such that, for any $1 > \epsilon > 0$:
\begin{equation}
\begin{aligned}
\label{eq143}
\mathbb{P}\left(\textnormal{Exc} \geq A''f(i+1)^{\epsilon}\right) \leq &A''\left(\exp\left(\frac{-f(i+1)^{\epsilon/2}}{A''}\right)+\exp\left(\frac{-\sqrt{f(i+1)}}{A''}\right)+\exp\left(\frac{-n^{1/12}}{A''}\right)\right).
\end{aligned}
\end{equation}
Taking $\epsilon = 1/4$ in Equation \eqref{eq143} and using Lemma \ref{excess_height} with Equations \eqref{eqf142} and \eqref{eq143} then yields:
\begin{equation*}
\begin{aligned}
\mathbb{P}\left(\textnormal{diam}(G(\mathcal{V} \setminus \tilde{\mathcal{V}}_i, p_{f(i+1)})) \geq \frac{A\ell_n^{1/3}}{f(i+1)^{1/4}}\right) &\leq A\left(\exp\left(\frac{-f(i+1)^{1/8}}{A}\right)
+\exp\left(\frac{-n^{1/12}}{A}\right)\right) \\
&\leq A'\left(\exp\left(\frac{-f(i+1)^{1/8}}{A'}\right)\right).
\end{aligned}
\end{equation*}
\end{proof}

\section{Bounding the diameter of the giant component}
\label{sec4}

\subsection{A new pruning procedure for the giant component}

Take $f \geq f(0)$, we consider the exploration process until time $t = \frac{5f\ell_n^{2/3}}{2C}$. We know by Theorem \ref{principal_a} that the giant component is fully discovered before $t$ with some high probability. In order to bound the diameter of the giant component, a natural idea would be to couple the trees discovered before times $t$ with a sequence of i.i.d. Galton-Watson trees with the distribution of $\tilde{\mathbb{T}}(1)$ by using the same colouring of the nodes described in Sub-section \ref{section5}. However,  $\tilde{\mathbb{T}}(1)$ is slightly supercritical. Thus, some of the Galton-Watson trees we couple with will have infinite size. To solve this problem we first add i.i.d. cuts with probability $\frac{2f}{\ell_n^{1/3}}$ to the edges of $G(\textbf{W},p_{f(i)})$. Then we couple the new smaller exploration trees obtained with i.i.d. Galton-Watson trees with the the distribution of:
$$
\sum_{l=1}^n \tilde{X}(v(1),l,i),
$$
where $\tilde{X}(k,l,i)$ is the product of two i.i.d. Bernoulli random variables. The first one corresponds to "having an edge" and has parameter: $$\left(1-\exp(-w_{k}w_{l}p_{f(i+1)})\right),$$
the second one corresponds to "not having a cut" and has parameter:
$$\left(1-\frac{2f}{\ell_n^{1/3}}\right).$$
We then apply the procedure of Sub-section \ref{section5} on those Galton-Watson trees.
This yields a new two steps procedure, first pruning then coloring, which gives a forest of i.i.d. slightly sub-critical Galton-Watson trees.
Moreover, the sum of heights of those Galton-Watson trees is an upper bound of the height of any exploration tree discovered before time $t$.\par
We start by bounding the number of Galton-Watson trees obtained by this construction. 
A new tree is created at each cut. This means that the total number of Galton-Watson trees after pruning is equal to the number of cuts plus the number of initial trees. However, if we are only interested in the height of one tree. Only the number of cuts is of interest. Hence, even though we have a bound on the initial number of trees before time $t$ by  \cite{O20}. We, in fact only need a bound on the number of cuts. This is given in the following lemma.
\begin{Lemma}
\label{lem23}
The number of cuts in the pruning procedure is less than $\frac{6f^2\ell_n^{1/3}}{C}$ with probability at least:
$$1-\exp\left(-2Cf^3\right).$$
\end{Lemma}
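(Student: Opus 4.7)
Reading the construction carefully, the pruning declares each edge of $G(\textbf{W},p_{f(i)})$ to be a ``cut'' independently with probability $\tfrac{2f}{\ell_n^{1/3}}$, but only cuts falling on an edge of the (pre-cut) BFW exploration forest actually split a tree and thus contribute to the count appearing in the lemma (this matches the excerpt's statement that ``a new tree is created at each cut''). Therefore the number of cuts is at most the number of cut edges among the BFW tree edges produced in the exploration up to time $t=\tfrac{5f\ell_n^{2/3}}{2C}$. Since the exploration trees together contain at most $t-1$ edges on $\{v(1),\dots,v(t)\}$, and since the cut marks are independent of the exploration order, this count is stochastically dominated, conditionally on the exploration, by $X\sim\mathrm{Bin}\bigl(t,\tfrac{2f}{\ell_n^{1/3}}\bigr)$.

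The mean satisfies
\[
\mathbb{E}[X]\ \le\ t\cdot\frac{2f}{\ell_n^{1/3}}\ =\ \frac{5f^2\ell_n^{1/3}}{C},
\]
so the target value $\tfrac{6f^2\ell_n^{1/3}}{C}$ corresponds to a multiplicative deviation of $\tfrac{1}{5}$ above the mean. I would apply the standard multiplicative Chernoff bound $\mathbb{P}(X\ge(1+\delta)\mu)\le\exp(-\delta^2\mu/3)$ with $\delta=\tfrac{1}{5}$, which yields
\[
\mathbb{P}\!\left(X\ge \tfrac{6f^2\ell_n^{1/3}}{C}\right)\ \le\ \exp\!\left(-\frac{f^2\ell_n^{1/3}}{75\,C}\right).
\]

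To convert this into the stated tail $\exp(-2Cf^3)$, I would use that Conditions~\ref{cond_nodess} imply $\ell_n=\Theta(n)$ together with the standing assumption $f=o(n^{1/3})$, so $\ell_n^{1/3}/f\to\infty$ as $n\to\infty$. In particular $\tfrac{f^2\ell_n^{1/3}}{75\,C}\ge 2Cf^3$ for all $n$ large enough, giving the claimed bound.

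The only step that requires genuine care is the stochastic domination by $\mathrm{Bin}(t,\tfrac{2f}{\ell_n^{1/3}})$: one must argue that ``number of cuts in the pruning procedure'' counts cuts on the at most $t-1$ tree edges of the BFW exploration rather than on the potentially much larger edge set of $G(\textbf{W},p_{f(i)})$ restricted to the first $t$ vertices, and that the cut marks, being independent from the exploration randomness, can indeed be absorbed into a binomial via a straightforward coupling. Once this reduction is justified, no further calculation is needed beyond the single Chernoff estimate above.
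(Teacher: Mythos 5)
Your proposal follows the same route as the paper's proof: identify that the cuts that matter are those landing on the at most $t-1$ tree edges of the BFW exploration forest up to time $t=\tfrac{5f\ell_n^{2/3}}{2C}$, observe that the cut marks are independent Bernoulli$\bigl(\tfrac{2f}{\ell_n^{1/3}}\bigr)$ variables independent of the exploration, and apply a concentration inequality to the resulting (dominated) binomial. The only difference is the concentration inequality: the paper applies the additive Hoeffding bound, obtaining a tail of order $\exp(-cf^3)$ with $c$ a constant (the paper writes $\exp(-4Cf^3/5)$, which in fact is slightly \emph{weaker} than the stated $\exp(-2Cf^3)$ since $C\ge 1$; the constant $2C$ in the lemma appears to be a harmless overclaim). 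You instead apply a multiplicative Chernoff bound, which yields the far stronger $\exp\bigl(-\Theta(f^2\ell_n^{1/3})\bigr)$; this comfortably dominates $\exp(-2Cf^3)$ once $n$ is large because $f=o(n^{1/3})$ gives $\ell_n^{1/3}/f\to\infty$. So your route not only works but actually closes a small constant gap in the paper. One minor arithmetic slip: with $\mu = tp = \tfrac{5f^2\ell_n^{1/3}}{C}$ and $\delta=\tfrac{1}{5}$, the Chernoff exponent $\delta^2\mu/3$ equals $\tfrac{f^2\ell_n^{1/3}}{15C}$, not $\tfrac{f^2\ell_n^{1/3}}{75C}$ (you appear to have used $\mu = \tfrac{f^2\ell_n^{1/3}}{C}$), though this has no bearing on the conclusion. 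Your care in flagging the stochastic domination step is also appropriate; the paper takes it for granted, and it is indeed correct since the cut marks are declared independently of all the exploration randomness.
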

\begin{proof}
Because a tree of size $m$ has $m-1$ edges, there are less than $t$ edges in the exploration trees before time $t$. Let $J$ be the number of edge cuts, then:
$$
\mathbb{E}[J] \leq \frac{5f^2\ell_n^{1/3}}{C}.
$$
Hence, by Hoeffding's inequality:
\begin{equation*}
\begin{aligned}
\mathbb{P}\left(J \geq \frac{6f^2\ell_n^{1/3}}{C}\right) &\leq \exp\left(\frac{-2(f^2\ell_n^{1/3})^2}{t}\right) \\
&\leq \exp\left(\frac{-4Cf^3}{5}\right).
\end{aligned}
\end{equation*}
\end{proof}
Since $C = \frac{\mathbb{E}[W^3]}{\mathbb{E}[W]} \geq 1$, this lemma shows that w.h.p the number of Galton-Watson trees obtained by the pruning then coupling procedure is less than $6f^2\ell_n^{1/3}+1 \leq 7f^2\ell_n^{1/3}$. 
\subsection{Bounding the diameter}

With this in hand, we can prove a bound on the height of the exploration tree of the largest component. We denote the Galton-Watson trees constructed by pruning by $(\mathbb{T}^{\top}(k))_{k \geq 1}$. We use a simple union bound. Either there is some tree discovered before time $t$ with large height, or the largest connected component is discovered after time $t$. We already have a bound on the latter event, and we bound the former event to get the following theorem. Bounding the height of an exploration tree discovered before time $t$ by the sum of heights of all the $(\mathbb{T}^{\top}(k))_{k \geq 1}$'s will not yield a good enough upper bound. But this first step will allow us to use a recursive argument which yields the tight bound presented here.
\begin{Theorem}
\label{Theoo18}
There exist constants $A' > 0$ and $A > 0$ such that the exploration tree of the largest component of $G(\textbf{W},p_{f})$, denoted by $H(\textbf{W},p_f)$, has height smaller than $A'f^2\ell_n^{1/3}$ with probability at least:
$$
1-A\exp\left(\frac{-f}{A}\right).
$$
\end{Theorem}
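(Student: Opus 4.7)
The plan is to combine three ingredients introduced in the present subsection: Theorem~\ref{principal_a}, which ensures the giant component of $G(\textbf{W},p_f)$ is fully discovered by the BFW before time $t=\tfrac{5f\ell_n^{2/3}}{2C}$, with probability at least $1-A\exp(-f/A)$; the pruning-then-colouring construction, which couples the exploration trees built before time $t$ with an i.i.d.\ family of slightly sub-critical Galton--Watson trees $(\mathbb{T}^{\top}(k))_{k\geq 1}$ of offspring mean $\mu = 1-f/\ell_n^{1/3}+o(f/\ell_n^{1/3})$; and Lemma~\ref{lem23}, which caps the number of these trees by $N := 7f^2\ell_n^{1/3}$ outside an event of probability $\exp(-2Cf^3)$. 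On the intersection of these good events, the exploration tree of the giant is assembled from at most $N$ i.i.d.\ copies of $\mathbb{T}^{\top}$ glued together via the cuts, and its height is bounded by the maximum, over root-to-leaf paths in this assembly, of the sum of the piece heights crossed plus the number of cut edges traversed.

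First I would control individual piece heights: the sub-criticality gives the exponential tail $\mathbb{P}(\text{height}(\mathbb{T}^{\top}(k))\geq k_0)\leq\mu^{k_0}\leq\exp(-cfk_0/\ell_n^{1/3})$, and a union bound over the $N$ pieces produces a uniform upper bound $h^{\star}:=A_0\ell_n^{1/3}/f$ on every piece height, outside an event of probability $\exp(-\Omega(f))$ for $A_0$ a suitable multiple of $\log(f^2\ell_n^{1/3})$. Summing heights over all pieces (or equivalently concentrating $\sum_k\text{height}(\mathbb{T}^{\top}(k))$ around its mean $N\cdot\mathbb{E}[\text{height}(\mathbb{T}^{\top})]$ of order $f^2\ell_n^{1/3}\log(\ell_n^{1/3}/f)$) already produces the coarse bound $H\leq H_0=O(f^2\ell_n^{1/3}\log n)$, which exceeds the target only by a logarithmic factor.

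The remaining logarithmic factor is removed by a recursive argument. Given an inductive bound $H\leq H_0$, the cuts along any fixed root-to-leaf candidate path form an i.i.d.\ Bernoulli sequence with parameter $q=2f/\ell_n^{1/3}$, so Bernstein's inequality together with a union bound over the (polynomially many) candidate paths gives that no path meets more than $M^{\star}:=3H_0 q$ pieces, outside an event of probability $n\exp(-cH_0 q)$. Under the coupling, the pieces on such a fixed path are i.i.d.\ copies of $\mathbb{T}^{\top}$ with $\mathbb{E}[\text{height}]=O(\log(\ell_n^{1/3}/f))$, so a second Bernstein bound on the sum of their heights converts the previous estimate into the refined bound $H_1\leq C'H_0\cdot f\log(\ell_n^{1/3}/f)/\ell_n^{1/3}$. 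Iterating this contraction a bounded number of times from the coarse $H_0$, and absorbing the terminal residue by $h^{\star}\leq\ell_n^{1/3}/f\leq f^2\ell_n^{1/3}$ (valid since $f\geq 1$), produces the claimed estimate $H\leq A'f^2\ell_n^{1/3}$.

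The main technical obstacle is preserving the i.i.d.\ structure of the piece heights through the argument: conditioning on the identity of the pieces lying on the \emph{realized} longest path introduces correlations through the BFW order and the random labels. I would handle this by performing each union bound at the level of \emph{deterministic} candidate paths (parametrised by an oriented edge sequence in the forest), for which the coupled piece heights remain unconditionally i.i.d., and by tracking the accumulated failure probabilities through the few recursion steps so that the overall tail bound stays within the required $A\exp(-f/A)$.
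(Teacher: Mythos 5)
Your skeleton matches the paper's (same three ingredients: Theorem~\ref{principal_a} for the exploration finishing by time $t$, the cut-then-colour construction, Lemma~\ref{lem23} for the cut count, followed by a bootstrap from a coarse $O(f^2\ell_n^{1/3}\log n)$ bound), but the two analytic steps that make the recursion close are missing, and without them the argument does not reach the claimed $A'f^2\ell_n^{1/3}$ for the full range $f\geq F$.

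First, the tail bound $\mathbb{P}(\textnormal{ht}(\mathbb{T}^{\top})\geq r)\leq\mu^r\approx\exp(-cfr/\ell_n^{1/3})$ is too weak. The paper combines this subcritical decay with the critical Galton--Watson estimate $\mathbb{P}(\textnormal{ht}\geq m)\leq A/m$ (Lemma~6.7 of \cite{RB18}) to get $\mathbb{P}(\textnormal{ht}\geq r)\leq\frac{A}{r}\exp(-fr/(2\ell_n^{1/3}))$ as in Equation~\eqref{eqf122}. The $1/r$ prefactor is what lets the union bound over $N\approx 7f^2\ell_n^{1/3}$ pieces control all heights by $h^\star\asymp\ell_n^{1/3}$ with failure probability $O(f^2e^{-f/2})$. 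Your version forces $h^\star$ to carry an extra $\log N\asymp\log n$ factor, so $h^\star\asymp\ell_n^{1/3}\log n/f$. This already exceeds the target $f^2\ell_n^{1/3}$ whenever $f=O((\log n)^{1/3})$, which includes the bounded range $f\in[F,\infty)$ the theorem must cover, and it also contradicts your later claim ``$h^\star\leq\ell_n^{1/3}/f$''. So the terminal residue is not absorbed.

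Second, the bootstrap via the \emph{mean} piece height $\mathbb{E}[\textnormal{ht}(\mathbb{T}^{\top})]=O(\log(\ell_n^{1/3}/f))$ does not capture the real contribution: a long path in the exploration tree preferentially passes through the few pieces with atypically large heights, and those scale like $\ell_n^{1/3}/\sqrt{u}$ for the $u$-th largest, not like $\log$. Concretely, your recursion $H_{l+1}\approx C'H_l\,f\log(\ell_n^{1/3}/f)/\ell_n^{1/3}$ is a strict contraction with only the trivial fixed point $0$, so the whole non-trivial bound has to come from the residue you cannot control (see above). The paper instead bounds the longest path by the sum of the \emph{top-$m$ order statistics} of piece heights: Equation~\eqref{eql6} gives $\mathbb{P}(\textnormal{ht}(\mathbb{T}^{\top}_u)\geq\ell_n^{1/3}/\sqrt{u})\leq A'\exp(-(\sqrt{u}f+u\log(u/f))/A')$, hence $\sum_{u\leq m}\textnormal{ht}(\mathbb{T}^{\top}_u)\lesssim 2\sqrt{m}\,\ell_n^{1/3}$. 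Combined with the cut count $\mathcal{P}\lesssim f\cdot\textnormal{height}/\ell_n^{1/3}$, this yields the genuinely nonlinear recursion $U_{l+1}=9f\sqrt{U_l}$ whose fixed point $81f^2$ produces the stated bound $O(f^2\ell_n^{1/3})$. This order-statistics step also removes the need to union-bound over candidate paths, sidestepping the polynomial-in-$n$ factor that would otherwise force $f\gtrsim\log n$. These are not cosmetic gaps: the square-root decay of the order statistics and the $1/r$ prefactor in the height tail are exactly what the fixed point $81f^2$ is built out of.
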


\begin{proof}
Let $\mathcal{B}_1$ be the event:  the exploration of the largest component of $G(\textbf{W},p_{f})$ is not already done at time $t$. By Theorem $1$ in  \cite{O20}:
\begin{equation}
\label{eql1}
\mathbb{P}(\mathcal{B}_1) \leq A\exp\left(\frac{-f}{A}\right),
\end{equation}
with $ A  >0$ a large constant. \par
Let $R$ be the random number of cuts obtained after pruning the exploration trees up to time $t$. And let $\mathcal{B}_2$ be the event: $R$ is larger than  $7f^2\ell_n^{1/3}$. By Lemma \ref{lem23}:
\begin{equation}
\label{eql2}
\mathbb{P}(\mathcal{B}_2) \leq A\exp\left(\frac{-f^3}{A}\right).
\end{equation}
If the largest component is discovered before time $t$, the  height of its exploration tree $\text{ht}(H(\textbf{W},p_f))$ will be smaller than the sum of heights of all the trees obtained after pruning. By exactly the same method we used in the proof of Lemma \ref{lem12}, there exists $A >0$ such that for any $r >0$:
\begin{equation}
\label{eqf122}
\mathbb{P}(\text{ht}(\mathbb{T}^{\top}(1))) \geq r) \leq \frac{A}{r}\exp\left(\frac{-fr}{2\ell_n^{1/3}}\right),
\end{equation}
and by the union bound:
\begin{equation}
\label{eql3}
\mathbb{P}\left(\sup_{k \leq 7f^2\ell_n^{1/3}} \text{ht}(\mathbb{T}^{\top}(k))) \geq \ell_n^{1/3}\right) \leq A'\exp\left(\frac{-f}{A'}\right),
\end{equation}
with $A'>0$ a large constant.
Denote the event $$\left\{\sup_{k \leq 7f^2\ell_n^{1/3}} \text{ht}(\mathbb{T}^{\top}(k))) \geq \ell_n^{1/3}\right\}$$
by $\mathcal{B}_3$. Using Equations \eqref{eql1} and \eqref{eql2} alongside Equation \eqref{eql3} shows that:
\begin{equation}
\label{eql4}
\mathbb{P}(\mathcal{B}_1\cup \mathcal{B}_2 \cup \mathcal{B}_3) \leq A\exp\left(\frac{-f}{A}\right).
\end{equation}
Let $\tilde{\mathcal{B}} = \bar{\mathcal{B}}_1 \cap \bar{\mathcal{B}}_2 \cap \bar{\mathcal{B}}_3$. If $\tilde{\mathcal{B}}$ holds, then the height of the exploration tree of the largest component is smaller than the sum of heights of the trees $((\mathbb{T}^{\top}(k))_{k \leq 7f^2\ell_n^{1/3}}$. By a small computation, since $f = o(n^{1/3})$, we have by Equation \eqref{eqf122}:
\begin{equation}
\begin{aligned}
\mathbb{E}[\textnormal{ht}(\mathbb{T}^{\top}(1))] &= \sum_{r = 0}^{\infty}\mathbb{P}(\textnormal{ht}(\mathbb{T}^{\top}(1)) \geq r) \\
&\leq 1+\sum_{r = 1}^{\infty}\frac{A}{r}\exp\left(\frac{-fr}{2\ell_n^{1/3}}\right)\\
&\leq 1+\sum_{r = 1}^{\frac{\ell_n^{1/3}}{f}}\frac{A}{r}
+\sum_{r = \frac{\ell_n^{1/3}}{f}+1}^{\infty}\frac{Af}{\ell_n^{1/3}}\exp\left(\frac{-fr}{2\ell_n^{1/3}}\right)\\
&\leq A'\log\left(\frac{\ell_n^{1/3}}{f}\right)+\frac{A'f}{\ell_n^{1/3}\left(1-\exp\left(\frac{-f}{2\ell_n^{1/3}}\right)\right)} \\
&= O\left(\log\left(\frac{n^{1/3}}{f}\right)\right)
\end{aligned}
\end{equation}
and similarily
\begin{equation}
\begin{aligned}
\mathbb{E}[\textnormal{ht}(\mathbb{T}^{\top}(1))^2] &= \sum_{r = 0}^{\infty}r^2\mathbb{P}(\textnormal{ht}(\mathbb{T}^{\top}(1))=r)\\
&\leq \sum_{r = 0}^{\infty}r(r+1)\mathbb{P}(\textnormal{ht}(\mathbb{T}^{\top}(1))=r) \\
&= \sum_{r = 0}^{\infty}2\mathbb{P}(\textnormal{ht}(\mathbb{T}^{\top}(1))=r)\left(\sum_{j=0}^r j\right) \\
&= 2\sum_{j = 0}^{\infty}j\left(\sum_{r=j}^\infty \mathbb{P}(\textnormal{ht}(\mathbb{T}^{\top}(1))=r)\right) \\
&= 2\sum_{j = 0}^{\infty}j\mathbb{P}(\textnormal{ht}(\mathbb{T}^{\top}(1))\geq j) \\
&=O\left(\frac{n^{1/3}}{f}\right).
\end{aligned}
\end{equation}
Hence, by Bernstein's inequality (\cite{B24}) for $A' >0$ large enough, there exists a large constant $A >0$ such that:
\begin{equation}
\begin{aligned}
\label{eql17}
&\mathbb{P}\left(\sum_{k= 1}^{R}\text{ht}(\mathbb{T}^{\top}(k))\mathbbm{1}(\tilde{\mathcal{B}}) \geq A'f^2\ell_n^{1/3}\log\left(\frac{n^{1/3}}{f}\right)\right) \\
&\leq \mathbb{P}\left(\sum_{k= 1}^{7f^2\ell_n^{1/3}}\text{ht}(\mathbb{T}^{\top}(k)))\mathbbm{1}(\text{ht}(\mathbb{T}^{\top}(k)) \leq \ell_n^{1/3}) \geq A'f^2\ell_n^{1/3}\log\left(\frac{n^{1/3}}{f}\right)\right)\\
&\leq A\exp\left(\frac{-\log\left(\frac{n^{1/3}}{f}\right)f^2}{A}\right).
\end{aligned}
\end{equation}
Suppose now that the height of the exploration tree of any connected component discovered before time $t$ is at most $A'f^2\ell_n^{1/3}\log\left(\frac{n^{1/3}}{f}\right)$, with $A'$ the constant of Equation \eqref{eql17}. Denote that event by $\mathcal{S}$. In that case, the number of cuts (denoted by $\mathcal{P}$) in one of the longuest paths in the exploration tree  of $H(\textbf{W},p_f)$ verifies by Bernstein's inequality (\cite{B24}):
\begin{equation}
\label{eql5}
\mathbb{P}\left(\mathcal{P}\mathbbm{1}(\mathcal{S}) \geq 5A'f^3\log\left(\frac{n^{1/3}}{f}\right)\right) \leq A''\exp\left(\frac{-\log\left(\frac{n^{1/3}}{f}\right)f^2}{A''}\right),
\end{equation}
We have for  $n$ large enough, for any $7f^2\ell_n^{1/3} \geq u \geq 1$: 
$$
\binom{7f^2\ell_n^{1/3}}{u} \leq \frac{(7f^2\ell_n^{1/3})^u}{u!} \leq \left(\frac{7f^2\ell_n^{1/3}e}{u}\right)^u
$$
With this we can bound the height of the $u$'th highest tree, denoted by $\mathbb{T}^{\top}_u$. By Equation \eqref{eqf122} there exist some large constants $A >0$ and $A' > 0$ such that:
\begin{equation}
\begin{aligned}
\label{eql6}
\mathbb{P}\left(\text{ht}(\mathbb{T}^{\top}_u)\mathbbm{1}(\tilde{\mathcal{B}}) \geq \frac{\ell_n^{1/3}}{\sqrt{u}}\right) &\leq \left(\frac{A\sqrt{u}}{\ell_n^{1/3}}\exp\left(\frac{-f}{2\sqrt{u}}\right)\right)^u\binom{7f^2\ell_n^{1/3}}{u} \\
&\leq \left(\frac{A\sqrt{u}}{\ell_n^{1/3}}\exp\left(\frac{-f}{2\sqrt{u}}\right)\right)^u\left(\frac{7ef^2\ell_n^{1/3}}{u}\right)^u \\
&\leq \left(\frac{7Aef^2}{\sqrt{u}}\exp\left(\frac{-f}{2\sqrt{u}}\right)\right)^u \\
&\leq A'\exp\left(\frac{-\sqrt{u}f-u\log{\left(\frac{u}{f}\right)}}{A'}\right).
\end{aligned}
\end{equation}
Let $m= 5A'f^3\log\left(\frac{n^{1/3}}{f}\right)$. If $\mathcal{P} \leq m$, then the height of the largest component is smaller than the sum of heights of the $m$ highest Galton-Watson trees $(\mathbb{T}^{\top}_u)_{u\leq m}$. By the union bound, using Equations \eqref{eql4},\eqref{eql17}, \eqref{eql5}, and \eqref{eql6}, there exist $A >0$ and $A'>0$ large enough such that:
\begin{equation}
\begin{aligned}
&\mathbb{P}\left(\sum_{u=1}^{m}\text{ht}(\mathbb{T}^{\top}_u) \geq \ell_n^{1/3}\left(\sum_{u = 1}^{ m}\frac{1}{\sqrt{u}}\right)\right) \\
&\leq A\exp\left(\frac{-f}{A}\right) + A\exp\left(\frac{-\log\left(\frac{n^{1/3}}{f}\right)f^2}{A}\right) + \sum_{u=1}^{m}A\exp\left(\frac{-\sqrt{u}f-u\log{\left(\frac{u}{f}\right)}}{A}\right) \\
&\leq A'\exp\left(\frac{-f}{A'}\right) + A\exp\left(\frac{-\log\left(\frac{n^{1/3}}{f}\right)f^2}{A}\right).
\end{aligned}
\end{equation}
Moreover, by comparison with an integral:
$$
\sum_{u = 1}^{ m}\frac{1}{\sqrt{u}} \leq 2\sqrt{m}.
$$
This shows that the height of the largest component is in fact smaller than $2\sqrt{m}\ell_n^{1/3}$ w.h.p. We can repeat the same argument as before to show that the number of cuts $\mathcal{P}$ in the largest path verifies:
$$
\mathbb{P}\left(\mathcal{P} \geq 9f\sqrt{m}\right) \leq A'\exp\left(\frac{-f}{A'}\right)+A\exp\left(\frac{-\log\left(\frac{n^{1/3}}{f}\right)f^2}{A}\right) + A\exp\left(\frac{-\sqrt{m}}{A}\right).
$$
By repeating the same arguments recursively, we get for any $l \geq 0$:
\begin{equation}
\mathbb{P}\left(\text{ht}(H(\textbf{W},p_f)) \geq \ell_n^{1/3}U_{l+1}\right) \leq A\exp\left(\frac{-f}{A}\right) + \sum_{u=0}^t A\exp\left(\frac{-U_l}{A}\right),
\end{equation}
where:
$$
U_0 = m,
$$
and:
$$
U_{l+1} = 9f\sqrt{U_l}.
$$
The sequence $(U_l)_{l \geq 0}$ converges to $81f^2$. If we define $R_l = U_l/(81f^2)$, then:
$$
R_{l+1} = \sqrt{R_l}.
$$
It is clear that for $l_0 = \lceil\log(\ln(R_0))\rceil$ we have $1 < R_{l_0} \leq e$. Hence:
\begin{equation}
\begin{aligned}
\mathbb{P}\left(\text{ht}(H(\textbf{W},p_f)) \geq \ell_n^{1/3}81ef^2\right) &\leq A\exp\left(\frac{-f}{A}\right) + \sum_{l=0}^{l_0} A\exp\left(\frac{-U_l}{A}\right) \\
&\leq A\exp\left(\frac{-f}{A}\right) + \sum_{l=0}^{l_0} A\exp\left(\frac{-81f^2R_{l_0}^{2^l}}{A}\right) \\
&\leq A\exp\left(\frac{-f}{A}\right) + \sum_{l=0}^{l_0} A'\exp\left(\frac{-81f^2(l+1)R_{l_0}}{A'}\right) \\
&\leq A\exp\left(\frac{-f}{A}\right) + A''\exp\left(\frac{-f^2}{A''}\right),
\end{aligned}
\end{equation}
which finishes the proof.
\end{proof}
With a little more work, we could show that the bound given above is in fact tight, in the sense that for any $\epsilon < 1$ the height of the largest component will be larger than $\frac{f^{1+\epsilon}\ell_n^{1/3}}{A}$ w.h.p. Finally by Theorem \ref{principal_a}, Lemma \ref{excess_height} and Theorem \ref{Theoo18} we get:
\begin{Theorem}
\label{theo19}
There exist constants $A' > 0$ and $A > 0$ such that:
$$
\mathbb{P}(\textnormal{diam}(H(\textbf{W},p_f)) \geq A'f^5\ell_n^{1/3}) \leq  A\exp\left(\frac{-f}{A}\right).
$$
\end{Theorem}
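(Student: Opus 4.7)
The proof of Theorem \ref{theo19} should be an immediate combination of three ingredients already in hand, namely Theorem \ref{principal_a} (bound on the surplus of the giant), Theorem \ref{Theoo18} (bound on the height of the exploration tree spanning the giant), and the deterministic Lemma \ref{excess_height} that converts height plus excess into a bound on the longest path. So the plan is essentially a union bound together with one arithmetic substitution.

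More precisely, I would proceed as follows. First, apply Theorem \ref{principal_a} with any fixed $\epsilon \in (0,1)$ to obtain a constant $A_1>0$ such that, with probability at least $1 - A_1 \exp(-f^{\epsilon/2}/A_1)$, the excess (surplus) $q$ of the connected component $H(\textbf{W},p_f)$ satisfies $q \leq A_1 f^3$. Second, apply Theorem \ref{Theoo18} to obtain a constant $A_2>0$ (and some $A_2'>0$) such that, with probability at least $1 - A_2 \exp(-f/A_2)$, the exploration tree of $H(\textbf{W},p_f)$, which is a spanning tree of this component, has height $h \leq A_2' f^2 \ell_n^{1/3}$. Both of these failure probabilities are dominated by $A \exp(-f/A)$ for a suitable $A > 0$ (since we may always take $\epsilon/2$ at least $1$ after adjusting constants, or more cleanly just use $f^{\epsilon/2} \geq$ a constant times $f$ for $\epsilon = 1$, which is still within the scope of Theorem \ref{principal_a}), so the union of the two events holds with probability at least $1 - A \exp(-f/A)$.

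On the intersection of these two good events, Lemma \ref{excess_height} applied to $H(\textbf{W},p_f)$ with the exploration tree as the chosen spanning tree yields
\begin{equation*}
\textnormal{diam}(H(\textbf{W},p_f)) \;\leq\; 2h(q+1) + q \;\leq\; 2 A_2' f^2 \ell_n^{1/3} \bigl( A_1 f^3 + 1 \bigr) + A_1 f^3.
\end{equation*}
For $n$ (hence $\ell_n$) large and with $f$ at least a fixed constant, the dominant term is $2 A_1 A_2' f^5 \ell_n^{1/3}$, so the right-hand side is bounded by $A' f^5 \ell_n^{1/3}$ for some absolute constant $A' > 0$. Combining this deterministic bound with the probability estimate above gives
\begin{equation*}
\mathbb{P}\bigl(\textnormal{diam}(H(\textbf{W},p_f)) \geq A' f^5 \ell_n^{1/3}\bigr) \;\leq\; A \exp\!\left(\frac{-f}{A}\right),
\end{equation*}
as claimed.

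There is no real obstacle here beyond bookkeeping of constants and making sure that the two ``good'' events are reconciled into a single probability bound of the advertised form $A \exp(-f/A)$; the genuine work was carried out in Theorems \ref{principal_a} and \ref{Theoo18}, and Lemma \ref{excess_height} does the rest. The only point deserving a comment is that the $f^5$ in the exponent comes from the product $h \cdot q \sim f^2 \cdot f^3 = f^5$, so one cannot hope to improve the power of $f$ in this theorem without sharpening either the height bound (currently $f^2 \ell_n^{1/3}$) or the excess bound (currently $f^3$), and in the sequel it is this $f^5$ that will have to be absorbed by the snapshot/telescoping argument when $f$ is let grow to $f'_n$.
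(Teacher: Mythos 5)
Your decomposition is exactly the paper's: the paper's entire ``proof'' of Theorem \ref{theo19} is the single sentence ``Finally by Theorem \ref{principal_a}, Lemma \ref{excess_height} and Theorem \ref{Theoo18} we get,'' and your fleshed-out version -- surplus $q \leq A_1 f^3$ from Theorem \ref{principal_a}, height $h \leq A_2' f^2 \ell_n^{1/3}$ from Theorem \ref{Theoo18}, then $2h(q+1)+q \lesssim f^5 \ell_n^{1/3}$ from Lemma \ref{excess_height} -- is precisely the intended argument. The arithmetic producing the power $f^5$ is correct.

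One point in your probability bookkeeping does not hold up, though. Theorem \ref{principal_a} restricts $1 \geq \epsilon > 0$, so the failure probability it gives for the surplus event is at best $A\exp(-\sqrt{f}/A)$, never $A\exp(-f/A)$. Your two suggested fixes both fail: ``take $\epsilon/2 \geq 1$'' is out of the theorem's stated range, and ``$f^{\epsilon/2} \geq$ const$\cdot f$ for $\epsilon=1$'' is false for large $f$ since $\sqrt{f} = o(f)$; adjusting the multiplicative constant $A$ cannot repair a wrong exponent on $f$. So the honest conclusion from the three cited ingredients is $\mathbb{P}(\textnormal{diam}(H(\textbf{W},p_f)) \geq A'f^5\ell_n^{1/3}) \leq A\exp(-\sqrt{f}/A)$, slightly weaker than what Theorem \ref{theo19} claims. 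This gap is in fact present (silently) in the paper's own derivation as well. It is harmless downstream: the proof of Theorem \ref{critical-diameter} only consumes a bound of the form $A\exp(-f(i)^{1/8}/A)$, which $\exp(-\sqrt{f}/A)$ comfortably implies, and the sum $\sum_i f(i+1)^5 \ell_n^{1/3} \exp(-\sqrt{f(i)}/A)$ is still $O(n^{1/3})$. Either restate Theorem \ref{theo19} with $\exp(-\sqrt{f}/A)$, or, if one wants the stronger $\exp(-f/A)$, one would need to verify that the surplus bound for the \emph{giant} component alone in \cite{O20} actually comes with a better exponent than the generic $\exp(-f^{\epsilon/2}/A)$ stated in Theorem \ref{principal_a}; your proof as written does not establish that.
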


\section{Proofs of Theorems \ref{stats}, \ref{stats_2}, \ref{well-behaved} and \ref{critical-diameter}}
The different statements of Theorem \ref{well-behaved} were all proved in the previous sections. The statements about the weights are shown in Theorem \ref{principal_a}, and the statement about the longest path of small components is a direct corollary of Theorem \ref{them15}. \par
Now we finish  the proof of Theorem \ref{critical-diameter}.
recall that $(p_{f(i)})_{i \geq 0}$ is a sequence such that $f(0) = F$ the large constant defined at the end of Subsection $2.1$, and $f(i+1) = \frac{3}{2}f(i)$ for any $i \geq 0$.
We stop at $f(t_n)$ the smallest element larger than $f'_n = \frac{\ell_n^{1/3}}{\log(n)}$. For $0 \leq i \leq t_n$ we define the following events, $A >0$ is a large constant: 
\begin{enumerate}
    \item $E_1(i)$ is the event where $H(\textbf{W},p_{f(i)})$ has size between $\frac{3f(i)\ell_n^{2/3}}{2C}$ and $\frac{5f(i)\ell_n^{2/3}}{2C}$. 
    \item $E_2(i)$ is the event where the longest path of $H(\textbf{W},p_{f(i)})$ has length at most $Af(i)^5\ell_n^{1/3}$.
    \item $E_3(i)$ is the event where every connected component of $G(\mathcal{V} \setminus \tilde{\mathcal{V}}_i, p_{f(i+1)})$ has size at most $\frac{A\ell_n^{2/3}}{f(i)^{3/4}}$,  and longest path at most $\frac{A\ell_n^{1/3}}{f(i)^{1/4}}$.
\end{enumerate}

Remark that $j \leq 2\log(n)$. Let $r$ be the smallest value such that $E_1(i)$, $E_2(i)$, and $E_3(i)$ hold for every $r \leq l \leq j$. By Lemma \ref{croiss} and a simple computation:
$$
\text{diam}(H(\textbf{W},p_{f(j)})) \leq \text{diam}(H(\textbf{W},p_{f(r)})) + \frac{A'\ell_n^{1/3}}{f(r)^{1/5}} \leq A''f(r)^5\ell_n^{1/3}.
$$
If $r=i+1$ then one of the events $E_1(i)$, $E_2(i)$, or $E_3(i)$ does not hold, because if not we would have $r \leq i$. By Theorems \ref{principal_a}, \ref{Theo11}, \ref{Theo10}, \ref{them15} and Theorem \ref{theo19}, the probability of one of those events not happening is at most:
$$
\mathbb{P}(r = i+1) \leq A\exp\left(\frac{-f(i)^{1/8}}{A}\right).
$$
Combining the two inequalities above yields:
\begin{equation*}
\begin{aligned}
\mathbb{E}\left[\text{diam}(H(\textbf{W},p_{f(j)}))\right]  &\leq \sum_{i=1}^{j-1}A''f(i+1)^5\ell_n^{1/3}\mathbb{P}(r = i+1) \\
&\leq \sum_{i=1}^{j-1}A''f(i+1)^5\ell_n^{1/3}A\exp\left(\frac{-f(i)^{1/8}}{A}\right) \\
&= O(n^{1/3}).
\end{aligned}
\end{equation*}
The lower bound was provided in Theorem \ref{lower_bound}.
We move now to Theorem \ref{stats}.
\begin{proof}[Proof of Theorem \ref{stats}]
Since the diameter is an upper bound of typical distances, by Theorem $1$ we already have:
$$
\mathbb{E}[d(U_{f_n},V_{f_n})] = O(n^{1/3}).
$$
We need to show now that there exist some constants $\epsilon > 0$ and $\epsilon' > 0$ such that typical distances in the largest tree in $\mathcal{T}(\textbf{W},\tilde{p}_n)$ are larger than $\epsilon n^{1/3}$ with probability at least $\epsilon'$. Denote that tree by $B(\tilde{p}_n)$. And more generally for any $p > 0$ denote the largest tree in $\mathcal{T}(w,p)$  by $B(p)$. Let $f > 0$ be a large constant, and $p_f=\frac{1}{\ell_n}+\frac{f}{\ell_n^{4/3}}$. For $n$ large enough $p_f < \tilde{p}_n$. For a constant $\epsilon > 0$, consider the following events: 
\begin{itemize}
    \item $\mathcal{A}_1$ is the event where  $B(p_f)$
is a sub-tree of $B(\tilde{p}_n)$, and the size and weight of $B(p_f)$ are smaller than $3f\ell_n^{2/3}$ for every $n$ large enough.

\item $\mathcal{A}_2$ is the event where the size and weight of $B(\tilde{p}_n)$ is larger than $\epsilon n$ for every $n$ large enough.
\item $\mathcal{A}_3$ is the event where there exist two sub-trees of $B(p_f)$, that we denote by $B_1(p_f)$ and $B_2(p_f)$, such that the minimal distance in $B(p_f)$ between a node in $B_1(p_f)$ and a node in $B_2(p_f)$ is larger than $\epsilon n^{1/3}$. Moreover, the sizes and weights of $B_1(p_f)$ and $B_2(p_f)$ are larger than $\epsilon n^{2/3}$ for every $n$ large enough.
\end{itemize}
Then by Theorem \ref{principal_a}, for any $\epsilon_1 >0$, if $f$ is large enough then $\mathcal{A}_1$ holds with probability larger than $1-\epsilon_1$. Moreover, by Theorem \ref{phase_trans}, there exists $\epsilon > 0$ such that $A_2$ holds with high probability.
And also, \cite{br20} show the convergence of the largest component of $G(\textbf{W},p_f)$ with graph distances scaled by $n^{1/3}$ and sizes scaled by $n^{2/3}$ in the Gromov-Hausdorff-Prokhorov topology to a continuous compact connected graph. We refer to their article for a detailed analysis of this general result. Here we just emphasize the fact that their limiting object is a non-trivial compact metric space which containse two subspaces of mass $\zeta > 0$ and at distance at least $\zeta$ with positive probability $\zeta' > 0$. This result directly implies the existence of $\epsilon_3 > 0$ such that $A_3$ holds with probability at least $\epsilon_3$. \par
Let $ \mathcal{A}(\epsilon) = \mathcal{A}_1 \cap \mathcal{A}_2 \cap \mathcal{A}_3$. By the discussion above, there exists two constants $\epsilon >0$ and $\epsilon_4 > 0$ such that, for a constant $f > 0$ large enough, $\mathcal{A}(\epsilon)$ holds with probability at least $\epsilon_4$. This shows that there exists some $\epsilon_5 > 0$ such that, by denoting the total weight of a graph $G$ by $W(G)$, we have \footnote{Here we just set the weights and sizes to $0$ if $B_1(p_f)$ or $B_2(p_f)$ do not exist}: 
\begin{equation}
\label{eqfin1}
\mathbb{E}\left[W\left(B_1(p_f)\right)\right] \geq \epsilon_5\mathbb{E}\left[W\left(B(p_f)\right)\right] .
\end{equation}
For $\tilde{p}_n \geq p \geq p_f$, let  $B_1(p)$ be the sub-tree of  $B(\tilde{p}_n)$ obtained by applying Kruskal's algorithm to   $B_1(p_f)$  between $p_f$ and $p$. And define $B_2(p)$ similarly. Clearly the distance between $B_1(\tilde{p}_n)$ and $B_2(\tilde{p}_n)$ is larger than $\epsilon_3n^{1/3}$. It is thus sufficient to show that the sizes of $B_1(\tilde{p}_n)$ and $B_2(\tilde{p}_n)$ are on expectation proportional to $n$. By symmetry it is sufficient to show this for $B_1(\tilde{p}_n)$. \par
Let $(r_1,r_2,r_3,...)$ be the sequence of increasing values between $p_f$ and $\tilde{p}_n$ at which a new edge is added by Kruskal's algorithm to $(B(r_i^{-}))_{i \geq 1}$. At each value $r_i$, an edge is created between $B(r_i^{-})$ and the rest of $\mathcal{T}(\textbf{W},r_i^{-})$. And by the properties of exponential random variables, the probability that such an edge gets connected to $B_1(r_i^{-})$ is equal to 
$$
\frac{W(B_1(r_i^{-}))}{W(B(r_i^{-}))},
$$
Hence the process of weights $$(W(B_1(r_i^{-})),W(B(r_i))-W(B_1(r_i^{-})))_{i \geq 1},$$ corresponds to a Polya Urn with a ball with of random weight is added at each step, it is easy to check by induction that if at step $i$, there exists $\epsilon > 0$ such that:
$$
\mathbb{E}[W(B_1(r_i^-))] = \epsilon \mathbb{E}[W(B(r_i^-))],
$$
Then at step $i+1$, a connected component $C(i)$ is added and we get:
\begin{equation*}
\begin{aligned}
\mathbb{E}[W(B_1(r_{i+1}^-))] &= \epsilon \mathbb{E}[W(B(r_i^-))] + \epsilon \mathbb{E}[W(C(i))] \\
&= \epsilon \mathbb{E}[W(B(r_{i+1}^-)],
\end{aligned}
\end{equation*}
with the same $\epsilon$ of step $i$.
We know that (for instance by Theorem \ref{phase_trans}), there exists $\epsilon_8 >0$ such that for $n$ large enough
$$
\mathbb{E}\left[W\left(B\left(\tilde{p}_n\right)\right)\right]\geq \epsilon_8n.
$$
Given that the event $\mathcal{A}$ holds with positive probability, this, alongside Equation \ref{eqfin1}, shows that there exists $\epsilon_6 > 0$ such that: 
\begin{equation}
\label{eqfin}
\mathbb{E}\left[W\left(B_1\left(\tilde{p}_n\right)\right)\right] \geq \epsilon_6n.
\end{equation}
And the same also holds for $B_2$. Finally, we use this lower bound on the weight in order to get a lower bound on what we actually need, the size. \par
Recall that the node weights depend implicitly on $n$. Suppose that there exists an $ N > 0$ and subsets $(\mathcal{S}_n)_{n \geq N}$ of the nodes such that:
$$
\sum_{k \in \mathcal{S}_n}w_k \geq \epsilon_6 n,
$$
for every $n \geq N$, but the sizes of those sets verify  $|\mathcal{S}_n| = o(n)$. Let $w_{V_n}$ be a uniformly chosen node from $(w_1,w_2,...,w_n)$.  We thus have:
$$
\mathbb{E}[\mathbbm{1}(V_n \in S_n)w_{V_n}] \geq \epsilon_7.
$$
Conditions $i$ and $iv$ in Conditions \ref{cond_nodess} imply by Theorem $3.6$ in \citet*{BL13} that the $(w_{V_n})_{n \geq N}$ are uniformly integrable.
However, by definition of $(\mathcal{S}_n)_{n \geq N}$ we also have $\mathbb{P}(V_n \in S_n) = o(1)$. This is in contradiction with the definition of uniform integrability . Hence no such sets exist, and Equation \ref{eqfin} implies the existence of $\epsilon_7$ such that:
\begin{equation*}
\mathbb{E}\left[\middle|B_1\left(\tilde{p}_n\right)\middle|\right] \geq \epsilon_7n,
\end{equation*}
for any $n$ large enough. We have thus shown that there exist two sub-trees, $B_1\left(\tilde{p}_n\right)$ and $B_1\left(\tilde{p}_n\right)$, of $B\left(\tilde{p}_n\right)$ of sizes proportional to $n$ and such that the distance between the two sub-trees is larger than $\epsilon_7n$ on-expectation. This finishes the proof. \end{proof}

Finally we prove the same result for the trees related to statistical physics. Meaning we take the minimum spanning tree of the largest component of $G(\textbf{W},\tilde{p}_n)$ with i.i.d capacities on its edges.

\begin{proof}[Proof of Theorem \ref{stats_2}]
Under Conditions \ref{cond_nodess}, and for $n$ large enough, we can also construct that minimal spanning tree by using an edge deletion algorithm with exponentially distributed edge capacities. Fix $c > 1$ and to each  non-oriented edge $\{i,j\}$, $i \neq j$, associate the random capacity $E'_{\{i,j\}}$, which is an exponential random variable of rate $1$. The capacities are then used to create a sequence of graphs. First keep each edge independently with probability $p_{\{i,j\}} = \frac{w_iw_jc}{\ell_n} = o(n^{-1/3})$. Once this operation is done we obtain a graph $G'(\textbf{W},+\infty)$. We can then construct an increasing sequence of graphs $(G'(\textbf{W},p))_{p \geq 0}$ for inclusion by letting $G'(\textbf{W},p)$ be the subgraph of $G'(\textbf{W},+\infty)$ with edge capacities verifying:
$$\left\{ \{i,j\} |  E'_{\{i,j\}} \leq p \right\}.$$ 
Under Conditions \ref{cond_nodess}:
$$
1 -\exp\left(\frac{-w_iw_jc}{\ell_n}\right) = p_{\{i,j\}}\left(1+O(p_{\{i,j\}})\right).
$$
Moreover, if $\{i,j\} \neq \{i',j'\}$ then the edge capacities  $E'_{\{i,j\}}$ and $E'_{\{i',j'\}}$ are independent. This implies that for any $\epsilon > 0$ and for $n$ large enough $G'(\textbf{W},+\infty)$ can be coupled edge by edge with two graphs $G(\textbf{W},\frac{c+\epsilon \ell_n^{-1/3}}{\ell_n})$ and $G(\textbf{W},\frac{c-\epsilon \ell_n^{-1/3}}{\ell_n})$ in such a way that: 
$$ 
G\left(\textbf{W},\frac{c-\epsilon \ell_n^{-1/3}}{\ell_n}\right) \subset G'(\textbf{W},+\infty) \subset G\left(\textbf{W},\frac{c+\epsilon \ell_n^{-1/3}}{\ell_n}\right).
$$
We say that $G'(\textbf{W},+\infty)$ is asymptotically equivalent to $G(\textbf{W},\tilde{p}_n)$. Generally, for every $p \geq 0$, $G'(\textbf{W},p)$ is asymptotically equivalent to $G(\textbf{W},(1-e^{-p})\tilde{p}_n)$. 
Now for each $p$, consider the forest $\mathcal{T}'(\textbf{W},p)$ constructed by the edge deletion algorithm: 
Sort the edges $(\{i,j\})_{i \leq n, j\leq n}$ of $G'(\textbf{W},p)$ by decreasing order of their capacities $(E'_{\{i,j\}})_{i \leq n, j\leq n}$. Then starting from the first edge, only keep an edge if removing it would disconnect a connected component.
By construction, for any $p \leq \infty$, $\mathcal{T}'(\textbf{W},p)$ is the  minimum spanning forest of $G'(\textbf{W},p)$ with respect to the capacities $(E'_{\{i,j\}})_{i \leq n, j\leq n}$. Hence, $(\mathcal{T}'(\textbf{W},p))_{p\geq 0}$ is an increasing process of spanning trees for $(G'(\textbf{W},p)_{p\geq 0}$. Hence, if we define $\mathcal{A}'(\epsilon)$ similarly to $\mathcal{A}(\epsilon)$ but for the forests $(\mathcal{T}'(\textbf{W},p))_{p \geq 0}$, it is sufficient to prove that there exist $\epsilon >0$ and $\epsilon'>0$ such that $\mathcal{A}'(\epsilon)$ holds with probability at least $\epsilon'$. This is true because $\mathcal{A}(\epsilon)$ holds with probability at least $\epsilon'$ for some $\epsilon' > 0$ and $\epsilon > 0$ and because of the asymptotic equivalence relation between $G(\textbf{W},(1-e^{-p})\tilde{p}_n)$ and $G'(\textbf{W},p)$ for every $p \geq 0$, and between $G'(\textbf{W},+\infty)$ and $G(\textbf{W},\tilde{p}_n)$.
\end{proof}
\newpage

\renewcommand\refname{Bibliography}

\bibliographystyle{abbrvnat} % or try abbrvnat or unsrtnat
\bibliography{bibli} % refers to example.bib

\end{document}